\pgfplotsset{compat=1.7}
\DeclareMathAlphabet{\mathdutchcal}{U}{dutchcal}{m}{n}
\SetMathAlphabet{\mathdutchcal}{bold}{U}{dutchcal}{b}{n}
\DeclareMathAlphabet{\mathdutchbcal}{U}{dutchcal}{b}{n}
\newcommand{\dt}{\, \textup{d} t}
\newcommand{\ds}{\, \textup{d} r }
\newcommand{\dx}{\, \textup{d} x}
\newcommand{\R}{\mathbb{R}} 
\newcounter{rownumber}
\newcommand{\vecc}[1]{\boldsymbol{#1}}
\newcommand\reallywidehat[1]{%
\savestack{\tmpbox}{\stretchto{%
  \scaleto{%
    \scalerel*[\widthof{\ensuremath{#1}}]{\kern-.6pt\bigwedge\kern-.6pt}%
    {\rule[-\textheight/2]{1ex}{\textheight}}
  }{\textheight}%
}{0.5ex}}%
\stackon[1pt]{#1}{\tmpbox}%
}
\newtheorem{theorem}{Theorem}
\newtheorem{lemma}{Lemma}
\newtheorem{proposition}{Proposition}
\newtheorem*{assumption*}{Assumptions on the memory kernel}
\newtheorem{remark}{Remark}
\numberwithin{lemma}{section}
\numberwithin{proposition}{section}
\numberwithin{theorem}{section}
\numberwithin{equation}{section}
\newcommand{\leqnomode}{\tagsleft@true}
\newcommand{\reqnomode}{\tagsleft@false}
\title[Asymptotic behavior of     nonlinear sound waves]{Asymptotic behavior of nonlinear sound waves \\[0.5mm] in inviscid media with \\[0.5mm] thermal and molecular   relaxation} 
\subjclass[2010]{35L75, 35G25}
\keywords{nonlinear acoustics, JMGT equation, memory of type I, global well-posedness, asymptotic behavior}
\author[V. Nikoli\'c \& B. Said-Houari ]{\bfseries Vanja Nikoli\'c$^1$ and Belkacem Said-Houari$^2$}
\address{ 
Department of Mathematics \\ 
Radboud University   \\ 
Heyendaalseweg 135,
6525 AJ Nijmegen, The Netherlands}
\email{vanja.nikolic@ru.nl} 
\address{  
	Department of Mathematics\\ College of Sciences\\ University of
Sharjah, P. O. Box: 27272 \\ Sharjah, United Arab Emirates}
\email{bhouari@sharjah.ac.ae}  
\thanks{$^*$Corresponding author: Belkacem Said-Houari, \href{mailto:bhouari@sharjah.ac.ae}{bhouari@sharjah.ac.ae}}  
\begin{document}  
\vspace*{-4mm}       
\maketitle             
\vspace*{-4mm}  
\begin{center}
	{\footnotesize
		$^1$Department of Mathematics, Radboud University, The Netherlands \\
		$^2$Department of Mathematics, University of Sharjah, United Arab Emirates    
	}
\end{center}	
\vspace*{3mm}
\begin{abstract}      
Ultrasonic propagation through media with thermal and molecular relaxation can be modeled by third-order in time nonlinear wave-like equations with memory. This paper investigates the asymptotic behavior of a Cauchy problem for such a model, the nonlocal Jordan--Moore--Gibson--Thompson equation, in the so-called critical case, which      corresponds to propagation in inviscid fluids. The memory has an exponentially fading character and type I, meaning that involves only the acoustic velocity   potential. A   major challenge in the global analysis is that the linearized equation's decay estimates are of regularity-loss type. As a result, the classical energy methods fail to work for the nonlinear problem. To overcome this difficulty, we construct appropriate time-weighted norms, where weights can have negative exponents. These problem-tailored norms create artificial damping terms that help control the nonlinearity and the loss of derivatives, and ultimately allow us to discover the model's asymptotic behavior.                          
\end{abstract}              
\vspace*{2mm}       
                
\section{Introduction}
Ultrasonic waves traveling through fluids with impurities are known to be influenced by viscoelastic relaxation effects. An example of such a medium is water with micro-bubbles, commonly used as a contrast agent in ultrasonic imaging~\cite{dijkmans2004microbubbles} as well as in improving the speed and efficacy of focused ultrasound treatments~\cite{stride2010cavitation}.\\
\indent The goal of the present work is to study asymptotic behavior of such ultrasonic waves as modeled by the Jordan--Moore--Gibson--Thompson (JMGT) equation with memory:
\begin{equation}  \label{Main_Equation}
\begin{aligned}
&\tau \psi_{ttt}+\psi_{tt}-c^2\Delta u-b \Delta \psi_t+%
\displaystyle\int_{0}^{t}g(r)\Delta \psi(t-r)\ds
=\left( k \psi_{t}^{2}+|\nabla
\psi|^{2}\right)_t,
\end{aligned}
\end{equation}
in the so-called critical case when the medium parameters satisfy the relation
\[b= \tau c^2.\]
This wave-like equation models nonlinear sound propagation through inviscid media with thermal and molecular relaxation. The relaxation mechanisms are responsible for the third-order propagation and the viscoelastic term in the equation, whereas the negligible sound diffusivity leads to the critical condition $b= \tau c^2$. This kind of memory acting only on the solution of the equation (and not on its time derivatives) is often referred to in the literature as the memory of type I; cf.~\cite{lasiecka2017global, dell2016moore}. The reader is referred to Section~\ref{Section:Modeling} below for a more detailed insight into nonlinear acoustic modeling. \\
\indent To our best knowledge, the present work is the first treating the problem \eqref{Main_Equation} in the critical case. A major difficulty in treating the critical case is that the linearized equation's decay estimates are of \emph{regularity-loss} type. It is well-known that such a loss of regularity going from the initial data to the solution presents a big obstacle in proving nonlinear stability. The classical energy method fails. To solve this problem, we construct appropriate time-weighted norms:
\begin{equation}
\|\vecc{\Psi}\|_{\mathbbm{E}, t}^2=\sum_{i=0}^{[\frac{s-1}{2}]}\sup_{0\leq \sigma\leq t} (1+\sigma)^{i-1/2}||| \nabla^i \vecc{\Psi} (\sigma)|||_{H^{s-2i}}^2,  
\end{equation}
where $\vecc{\Psi}$ will denote the solution vector after rewriting our problem as a first-order system and $s\geq [\tfrac{3n}{2}]+5$; we refer to Section~\ref{Sec:Preliminaries} below for details. These tailored norms have a weight with a negative exponent, which helps introduce artificial damping to the system. This damping, in turn, allows us to control the nonlinearity and handle the loss of derivatives. \\
\indent The main result of this work is contained in Theorem~\ref{Main_Theorem} below and concerns global existence and asymptotic decay of solutions in $\R^n$, where $n \geq 3$, for smooth and small initial data. The decay estimates hold for a solution with a lower Sobolev regularity than that assumed for the initial data, which is to be expected in the presence of a loss of regularity; see, for instance, \cite{IK08,   Duan_Ruan_Zhu_2012, Racke_Said_2012_1}. The damping introduced by the memory term plays a key role in stabilizing the solution in the critical case. Without memory, the linearized problem is unstable. Indeed, it has been proven in \cite{PellSaid_2019} by relying on the Routh--Hurwitz theorem that the real parts of the eigenvalues associated with the linearized system are negative if and only if $b >\tau c^2$. \\
\indent We organize the rest of the paper as follows. In Section~\ref{Section:Modeling}, we discuss the modeling and related work on analyzing third-order acoustic equations.  In Section~\ref{Sec:Preliminaries}, we recall the problem's local-well posedness in the so-called history framework and then present our main result on the global well-posedness and asymptotic behavior of solutions for small and smooth data. Section~\ref{Sec:EstLin} deals with the decay estimates for the linear version of the equation, which we will rely on in the decay analysis of the nonlinear problem. We present the proof of the main result in Section~\ref{Sec:ProofMain}, up to two crucial energy bounds. Their proof is contained in Sections~\ref{Sec:EnergyAnalysis} and \ref{Sec:ArtificialDamping}, based on carefully designed time-weighted energies. We conclude the paper with a discussion and an outlook on open problems. Auxiliary technical results and proofs are collected in Appendices \ref{AppendixA}--\ref{AppendixC}.
\section{Acoustic waves in media with thermal and molecular relaxation}\label{Section:Modeling}
     \label{Sec:ProblemSetting}
     The  Jordan--Moore--Gibson--Thompson (JMGT) equation 
     \begin{equation}\label{MGT}
        \tau \psi_{ttt}+\alpha \psi_{tt}-c^{2}\Delta \psi-b\Delta \psi_{t}= \dfrac{\partial}{\partial t}\left(k(\psi_{t})^{2}+|\nabla \psi|^{2}\right)
        \end{equation}
arises in acoustics as a model of   nonlinear sound propagation through thermally relaxing fluids and gases; see~\cite{jordan2008nonlinear} for its derivation, which builds upon~\cite{moore1960propagation, Stokes,thompson}. Here $\psi=\psi(x,t)$ denotes the acoustic velocity potential.  The constant $c>0$ is the speed of sound in a given fluid and \[b=\delta +\tau c^2,\] where $\delta>0$ represents the so-called sound diffusivity and $\tau>0$ is the thermal relaxation time. Furthermore, $k=\beta_{\textup{a}}/c^2$, where $\beta_{\textup{a}}$ is the coefficient of nonlinearity. The coefficient $\alpha>0$ accounts for the losses due to friction.\\
\indent This third-order equation and its linear version, often called the Moore--Gibson--Thompson (MGT) equation, have received a lot of attention recently; we provide here only a selection of references~\cite{Kaltenbacher_2011, kaltenbacher2012well, bucci2019regularity, marchand2012abstract} with analysis in smooth bounded domains and~\cite{chen2020cauchy, Racke_Said_2019, PellSaid_2019_1} with analysis in $\R^n$. We also point out the recent studies on the controllability of the MGT equations in \cite{bucci2019feedback, Lizama_Zamorano_2019} and the vanishing thermal relaxation dynamics in~\cite{bongarti2018singular, KaltenbacherNikolic}. \\
\indent It is known that  
the behavior of the linear model hinges on the so-called critical parameter 
\begin{equation}\label{chi}
\chi=\alpha-\frac{c^{2}\tau}{b}.
\end{equation}
In the subcritical case when $\chi>0,$ the solution is exponentially stable on smooth bounded domains, whereas the energy is preserved in the critical case $\chi=0$; see~\cite{Kaltenbacher_2011} for the revealing analysis.  The MGT equation in $\R^n$ with a power-source nonlinearity $|u|^p$ has also been considered recently in~\cite{Chen_Palmieri_1}, with blow-up proven in the critical case $\chi=0$. \\
\indent  In media that exhibits molecular relaxation, such as water with micro-bubbles, chemically reacting fluids, or a mixture of gases, the viscoelastic effects influence the wave propagation; see, for example, the books~\cite{naugolnykh2000nonlinear, prieur2011nonlinear} for a deeper insight into this process. In such cases, the resulting wave equations have memory terms that correspond to particular relaxation mechanisms. This motivates our present study of the following nonlocal JMGT equation:
\begin{equation}  \label{Main_Equation}
\begin{aligned}
&\tau \psi_{ttt}+\alpha \psi_{tt}-c^2\Delta \psi-b \Delta \psi_t+%
\displaystyle\int_{0}^{t}g(r)\Delta \psi(t-t)\ds
=\left( k \psi_{t}^{2}+|\nabla
\psi|^{2}\right)_t,
\end{aligned}
\end{equation}
where the function $g$ is the memory kernel related to a particular relaxation mechanism. It typically has an exponentially fading character, meaning that the more recent inputs have a bigger influence on the acoustic velocity potential field compared to the older ones. We refer to~\cite{dell2016moore, lasiecka2017global,Lasiecka_Wang_2,Lasiecka_Wang_1, alves2018moore,Liuetal._2019} for a selection of recent theoretical results on this model and its linear version.\\
\indent We single out the contribution of~\cite{dell2016moore}, which shows that, in the for us relevant critical case $\chi=0$, the linearized problem associated to \eqref{Main_Equation} (with a general operator $\mathcal{A}$ instead of $-\Delta$) in bounded smooth domains is exponentially stable if and only if $\mathcal{A}$ is a bounded operator. We note that the decay rate given in \cite[Theorem 7.1]{dell2016moore} enjoys the property of the regularity loss. {In addition, this decay rate is polynomial of the form $1/t$, whereas the decay is exponential in the subcritical case; see~\cite[Theorem 1.4]{Lasiecka_Wang_1}. It has been confirmed in~\cite{Bounadja_Said_2019} that in $\R^n$, where $n \geq 1$, the linear problem in the critical case also has the decay property of regularity-loss type. \\ 
\indent Our goal in this paper is to investigate the asymptotic behavior of solutions to the nonlinear equation \eqref{Main_Equation} in the critical case $\chi=0$. We point out the results of~\cite{lasiecka2017global}, where the nonlocal JMGT equation without the quadratic gradient nonlinearity in the critical case is stabilized by considering a memory term that acts on both $\Delta \psi$ and $\Delta \psi_t$:
\begin{align} \label{mixed_memory}
\int_0^\infty g(r)\Delta (\psi+\tfrac{c^2}{b}\psi_t)(t-r)\, \ds.
\end{align} 
A major difficulty in the present analysis where the memory acts only on $\Delta \psi $ lies in the linearization's regularity loss. This loss in regularity going from the initial data to the solution prevents the use of standard energy methods in analyzing the corresponding nonlinear problem. To prove nonlinear stability with the memory of type I, we thus intend to construct problem-tailored time-weighted energies. The general ideas along these lines can be found, for example, in~\cite{IK08, Racke_Said_2012_1, Duan_Ruan_Zhu_2012}. Using weighted energy arguments, we will show that the solution is global and decays as in the subcritical case, provided that the initial data is very smooth and sufficiently small. Additionally, compared to the analysis in~\cite{lasiecka2017global}, the equation considered here has a quadratic gradient nonlinearity, which requires us to devise higher-order energy bounds and employ suitable commutator estimates. 
\section{Theoretical preliminaries and the main result}\label{Sec:Preliminaries}
For future use, we discuss in this section several useful background results and set the notation. In media with molecular relaxation caused by the presence of ``impurities" in the fluid, the memory kernel typically has the form
\begin{equation}
g(r)=m c^2\exp{(-r/\tau)},
\end{equation}
where $m$ is the relaxation   parameter; see~\cite[\S 1]{naugolnykh2000nonlinear} and \cite[\S 1]{lasiecka2017global}. Throughout the paper, we thus make the following assumptions on the relaxation kernel; cf.~\cite[\S 1]{dell2016moore}. 
\begin{assumption*} The memory kernel is assumed to satisfy the following conditions:
	\begin{enumerate}
		\item[(G1)] \label{itm:first}  $g\in W^{1,1}(\R^+)$ and $g'$ is almost continuous on $\R^+=(0, +\infty)$. \vspace{0.1 cm}
		\item[(G2)] $g(r) \geq 0$ for all $s>0$ and 
		\reqnomode
		\begin{align} \label{def_cg}
		\ c^2_g:=c^2-\displaystyle\int_{0}^{\infty}g(r)\ds>0. 	\vspace{0.1 cm}
		\end{align}
		\item[(G3)] There exists $\zeta>0$, such that the function $g$ satisfies the 
		differential inequality given by
		\begin{equation}
		g^\prime(r)\leq -\zeta g(r)
		\end{equation}
		for every $r\in (0,\infty)$. 	\vspace{0.1 cm}
		\item[(G4)] It holds that $g^{\prime\prime}\geq0$ almost everywhere.\vspace{0.2 cm}
	\end{enumerate}
\end{assumption*}
We wish to point out recent efforts in the works on linear wave equations with memory to relax the above assumptions on the memory kernel. In particular, the analysis of abstract linear viscoelastic equations in~\cite{conti2020general} removes the restrictive assumption on the differential inequality that the memory kernel should satisfy. \\
\indent We choose to adopt the so-called history framework of Dafermos~\cite{dafermos1970asymptotic}, following previous research on the wave equations with  memory in~\cite{dell2016moore, grasselli2002uniform}. This is achieved by introducing the auxiliary history variable $\eta=\eta(x, t, r)$ for $t \geq 0$, defined as
\begin{equation}  \label{def of eta}
\eta(x, t, r)= \begin{cases}
\psi(x,t)-\psi(x,t-r), \quad  &0<r\leq t, \\
\psi(x,t), \quad &r>t.
\end{cases}
\end{equation}   
Equation~\eqref{Main_Equation} is considered with the following initial data: 
\begin{equation}  \label{initial data}
\psi(x,0)=\psi_{0}(x),\qquad \psi_{t}(x,0)=\psi_{1}(x), \qquad \psi_{tt}(x,0)=\psi_{2}(x),
\end{equation}
whose regularity will be specified in theorems below. We can then rewrite our problem as
\begin{equation}  \label{eta syst}
\begin{cases}
\tau \psi_{ttt}+\alpha \psi_{tt}-b \Delta \psi_{t}-c^2_g\Delta \psi-%
\displaystyle\int_{0}^{\infty}g(r)\Delta \eta(r)\ds 
= 2k\psi_{t}\psi_{tt}+2 \nabla \psi \cdot \nabla \psi_t,  \\[2mm] 
\eta_{t}(x,s)+\eta_{r}(x,r)=\psi_{t}(x,t).%
\end{cases}%
\end{equation}
Provided that 
\begin{equation}
\eta(t=0)=\psi_0(x), \quad \eta(r=0)=0,
\end{equation} 
we can recover \eqref{def of eta} from the second equation in \eqref{eta syst}. We refer to~\cite[\S 3]{grasselli2002uniform} for a detailed discussion on this additional equation. Going forward, we set \[\alpha=1\] without the loss of generality. The critical condition  then reads as   
\[b =\tau c^2.\] We always assume that $\tau c^2 > \tau c^2_g$, which is equivalent to $\int_{0}^{\infty}g(r)\ds>0$.
\subsection{Notation} Throughout the paper, the constant $C$ denotes a generic positive constant that does not depend on time, and can have different values on different occasions. We often write $x \lesssim y$ instead of $x \leq C y$. \\
\subsection{The main result}  
To state our main result on the global existence and asymptotic decay, we first rewrite equation \eqref{eta syst} as a first-order in time system:
\begin{subequations} \label{Main_whole}
\begin{equation}\label{Main_System}  
\begin{aligned}
\begin{cases}
\psi_{t}=v, \\ 
v_{t}=w, \\ 
\tau w_{t}=- w+c^2_g\Delta \psi+b\Delta v + \displaystyle%
\int_{0}^{\infty}g(r)\Delta\eta(r)\ds+2k (vw+\nabla \psi \cdot \nabla v),
\\ 
\eta_{t}=v-\eta_{r},%
\end{cases}%
\end{aligned}
\end{equation}
with the initial data 
\begin{equation}  \label{Main_System_IC}
(\psi, v, w, \eta) |_{t=0} =(\psi_0, \psi_1, \psi_2, \psi_0).
\end{equation}
\end{subequations}
We then introduce the vector solution $\vecc{\Psi}=(\psi, v, w, \eta)^T$, where $(\psi, v, w, \eta)^T$ solves \eqref{Main_System}, with $\vecc{\Psi}(0)=\vecc{\Psi}_0=(\psi_0, \psi_1, \psi_2, \psi_0)^T $. Throughout this work, we assume that $n \geq 3$. For $s\geq 1$, we define the norm
\begin{equation} \label{1}
\begin{aligned}
||| \vecc{\Psi} |||_{H^s}^2
=&\, \begin{multlined}[t] \Vert\Delta(\psi +\tau v)\Vert^{2}_{H^s}+\Vert\nabla(\psi +\tau v)\Vert^{2}_{H^{s-1}}+\Vert  v+\tau w\Vert^{2}_{H^{s-1}}\\+\Vert\nabla(v+\tau w)\Vert^{2}_{H^s}+\Vert  \Delta v\Vert^{2}_{H^s} 
+\Vert \nabla v\Vert^{2}_{H^{s-1}}+\Vert w\Vert^2_{H^{s-1}} \\+\int_0^\infty (-g')\Vert \nabla\eta (r)\Vert^{2}_{H^{s-1}}\, \ds+\int_0^\infty (-g')\Vert \Delta\eta (r)\Vert^{2}_{H^s}\, \ds,\end{multlined}  
\end{aligned}  
\end{equation}
and the semi-norm
\begin{equation} \label{2}
\begin{aligned}
|\vecc{\Psi}|_{\mathbf{H}^{s}}^2
=&\, \begin{multlined}[t]
\Vert \Delta (\psi+\tau v)\Vert _{H^{s-1}}^{2} +\Vert\Delta v\Vert_{H^{s-1}}^{2}+\Vert\nabla(v+\tau w)\Vert^{2}_{H^{s-1}}+\Vert \nabla v\Vert_{H^{s-1}}^2\\+\| w\|^2_{H^{s-1}} 
+\int_0^\infty (-g')\Vert \nabla \eta(r)\Vert _{H^{s-1}}^{2}\, \ds+\int_0^\infty (-g')\Vert \Delta \eta(r)\Vert _{H^{s}}^{2}\, \ds. \end{multlined}
\end{aligned}  
\end{equation}
To prove global well-posedness in the critical case, we intend to derive energy estimates that are uniform in time. We will achieve this by using carefully designed weighted energies. In particular, we define the weighted energy norm     
\begin{equation}\label{Energy_Main}
\|\vecc{\Psi}\|_{\mathbbm{E}, t}^2=\sum_{i=0}^{[\frac{s-1}{2}]}\sup_{0\leq \sigma\leq t} (1+\sigma)^{i-1/2}||| \nabla^i \vecc{\Psi} (\sigma)|||_{H^{s-2i}}^2,    
\end{equation}
and the  associated dissipative norm 
\begin{equation} \label{Energy_Main_Dissipative}
\begin{aligned}  
\|\vecc{\Psi}\|_{\mathbbm{D},t}^2=&\, \begin{multlined}[t]\sum_{i=0}^{[\frac{s-1}{2}]}\int_0^t (1+\sigma)^{i-1/2}| \nabla^i \vecc{\Psi}(\sigma)|_{\mathbf{H}^{s-2i}}^2\, \textup{d}\sigma\\      
+\sum_{i=0}^{[\frac{s-1}{2}]}\int_0^t (1+\sigma)^{i-3/2}|||\nabla^i \vecc{\Psi}(\sigma)|||_{H^{s-2i}}^2(\sigma) \, \textup{d}\sigma. \end{multlined}
\end{aligned}
\end{equation}  
The choice of the upper index in the above sums will be clarified by the upcoming analysis. To formulate the decay results, it is helpful to introduce the vector   
\begin{equation} \label{def_U}
\vecc{U} =(v+\tau w,\nabla(\psi+\tau v),\nabla v)^T. 
\end{equation}
Its value at initial time is then
\begin{equation}
\vecc{U}_{0}=(\psi_{1}+\tau \psi_{2},\nabla(\psi_0+\tau \psi_{1}),\nabla \psi_{1})^T.
\end{equation}
We can now state the main result of this work; its proof will be given in Section~\ref{Section_Proof_Main_Result}.
\begin{theorem}\label{Main_Theorem}
Let $b =\tau c^2$ and assume that $n\geq 3$.  Furthermore, suppose that $s\geq [\tfrac{3n}{2}]+5$ and that assumptions $(\textup{G}1)$--$(\textup{G}4)$ on the memory kernel hold. Then there exists a constant $\delta_0>0$ such that if 
\begin{equation}
||| \vecc{\Psi}_0|||_{H^s}+\Vert \vecc{U}_0\Vert_{L^1} \leq \delta_0,
\end{equation}		 
 then problem \eqref{Main_whole} has a unique global solution $\vecc{\Psi}$, which satisfies the weighted energy estimate 
 \begin{subequations}
		\begin{equation}\label{Global_Existence_Estimate}
	\|\vecc{\Psi}\|_{\mathbbm{E},t}^2+\|\vecc{\Psi}\|_{\mathbbm{D},t}^2 \lesssim ||| \vecc{\Psi}_0|||_{H^s}^2+\Vert \vecc{U}_0\Vert^2_{L^1}. 
		\end{equation}
Furthermore, the following optimal decay estimate for the lower-order derivatives holds:
		\begin{equation}\label{Decay_Estimates}  
		\Vert \nabla^j \vecc{U}(t)\Vert_{L^2}\lesssim (||| \vecc{\Psi}_0|||_{H^s}+\Vert \vecc{U}_0\Vert_{L^1}) (1+t)^{-n/4-j/2}, \quad 0\leq j\leq s_0,
		\end{equation}
		where $s_0=[\frac{2s-n}{4}]$, as well as
		\begin{equation}\label{Estimate_Decay_v}
\Vert \nabla^j v(\sigma)\Vert_{L^2}\lesssim (||| \vecc{\Psi}_0|||_{H^s}+\Vert \vecc{U}  _0\Vert_{L^1}) (1+t)^{-n/4-j/2}, \quad 0\leq j\leq s_0-1,
\end{equation}
and 
\begin{equation}\label{Estimate_Decay_w}
\Vert \nabla^j w(\sigma)\Vert_{L^2}\lesssim (||| \vecc{\Psi}_0|||_{H^s}+\Vert \vecc{U}_0\Vert_{L^1}) (1+t)^{-n/4-1/2-j/2}, \quad 0\leq j\leq s_0-1.
\end{equation}
\end{subequations}  
	\end{theorem}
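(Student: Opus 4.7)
The strategy is a continuation/bootstrap built on the local well-posedness recalled in Section~\ref{Sec:Preliminaries}. Fix small $\delta>0$ and assume as a bootstrap hypothesis that $\|\vecc{\Psi}\|_{\mathbbm{E},T}\le\delta$ on the maximal existence interval $[0,T]$. The plan is to prove the closed a priori estimate
\begin{equation*}
\|\vecc{\Psi}\|_{\mathbbm{E},T}^{2}+\|\vecc{\Psi}\|_{\mathbbm{D},T}^{2}\lesssim ||| \vecc{\Psi}_{0}|||_{H^{s}}^{2}+\|\vecc{U}_{0}\|_{L^{1}}^{2}+\delta\bigl(\|\vecc{\Psi}\|_{\mathbbm{E},T}^{2}+\|\vecc{\Psi}\|_{\mathbbm{D},T}^{2}\bigr),
\end{equation*}
so that for $\delta$ small enough the last term is absorbed, yielding \eqref{Global_Existence_Estimate} and, by the standard continuation principle, global existence.

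The first ingredient is a high-order nonlinear energy identity. For each $0\le i\le[\tfrac{s-1}{2}]$ I would apply $\nabla^{i}$ to \eqref{Main_System} and test against the multipliers suggested by the critical structure, namely those producing the combined variables $\psi+\tau v$ and $v+\tau w$, since under $b=\tau c^{2}$ the principal part of \eqref{Main_System} collapses to a second-order wave structure in these combinations. The Dafermos identity for $\eta$ together with assumption (G3) extracts the memory dissipation $\int_{0}^{\infty}(-g')\|\Delta\eta\|^{2}\,\ds$, while the nonlinearities $vw$ and $\nabla\psi\cdot\nabla v$ are handled by commutator and Moser-type estimates using the embedding $H^{s-2i}\hookrightarrow L^{\infty}$; this is what forces the threshold $s\ge[\tfrac{3n}{2}]+5$. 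Multiplying the resulting identity by $(1+\sigma)^{i-1/2}$ and integrating in $\sigma$ produces, through differentiation of the weight, an extra coercive contribution of order $(1+\sigma)^{i-3/2}|||\nabla^{i}\vecc{\Psi}|||_{H^{s-2i}}^{2}$ whenever $i\ge 1$. This is the ``artificial damping'' matching the second sum in \eqref{Energy_Main_Dissipative}; it compensates the regularity-loss lack of full coercivity at top order.

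The level $i=0$ is where the weight $(1+\sigma)^{-1/2}$ is decreasing and the previous trick produces a term of the wrong sign, so it must be treated separately. Here I would invoke the linear $L^{1}$--$L^{2}$ regularity-loss decay estimates of Section~\ref{Sec:EstLin} and write a Duhamel representation for $\vecc{U}$ defined in \eqref{def_U}, namely $\vecc{U}(t)=\mathcal{S}(t)\vecc{U}_{0}+\int_{0}^{t}\mathcal{S}(t-\sigma)F(\sigma)\,\textup{d}\sigma$, where $\mathcal{S}$ is the linearized semigroup and $F$ assembles the quadratic source. The semigroup decay $(1+t)^{-n/4-j/2}$ (at the cost of higher Sobolev norms, which are uniformly controlled by $\|\vecc{\Psi}\|_{\mathbbm{E},t}$) combined with the quadratic structure of $F$, whose $L^{1}\cap L^{2}$ norm decays like $(1+\sigma)^{-n/2}$ by H\"older and the a priori smallness, delivers \eqref{Decay_Estimates}. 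Estimates \eqref{Estimate_Decay_v}--\eqref{Estimate_Decay_w} then follow by inverting the algebraic definition \eqref{def_U} of $\vecc{U}$ and using $w=v_{t}$ in the third equation of \eqref{Main_System}, with the decay in $w$ gaining the extra half-power from differentiation.

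The main obstacle is precisely the coupling of these two ingredients at the $i=0$ level. Because of the regularity loss, the top-order weighted energy does not control its own decay, so the Duhamel-based decay of $\vecc{U}$ must feed back into the $i=0$ closure, while the weighted energy in turn supplies the higher Sobolev norms of the source on the right-hand side of the Duhamel formula. Threading this loop consistently is what pins down the regularity threshold $s\ge[\tfrac{3n}{2}]+5$ and the specific polynomial weights $(1+\sigma)^{i-1/2}$ and $(1+\sigma)^{i-3/2}$ appearing in \eqref{Energy_Main}--\eqref{Energy_Main_Dissipative}.
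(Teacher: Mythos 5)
Your overall skeleton (closing the loop between a time--weighted energy norm and a Duhamel decay quantity, then bootstrapping) is consistent with the paper, but there is a concrete conceptual error in the sign analysis of the time weight that would make your argument fall apart exactly where the paper's machinery is doing the critical work.

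You claim that multiplying the Lyapunov inequality by $(1+\sigma)^{i-1/2}$ and integrating produces an ``extra coercive contribution'' for $i\ge 1$, and that the level $i=0$ is the problematic one because the weight $(1+\sigma)^{-1/2}$ is decreasing. This is exactly backwards. Starting from $\frac{\textup{d}}{\textup{d}\sigma}\mathscr{F}+\mathbf{D}\lesssim\mathbf{R}$, multiplying by $(1+\sigma)^{\gamma}$ and integrating gives
\begin{equation}
(1+t)^{\gamma}\mathscr{F}(t)+\int_0^t(1+\sigma)^{\gamma}\mathbf{D}\,\textup{d}\sigma\lesssim\mathscr{F}(0)+\gamma\int_0^t(1+\sigma)^{\gamma-1}\mathscr{F}\,\textup{d}\sigma+\int_0^t(1+\sigma)^{\gamma}\mathbf{R}\,\textup{d}\sigma,
\end{equation}
so the term carrying the differentiated weight appears with the factor $\gamma$ on the \emph{right}. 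When $\gamma=i-1/2>0$, i.e.\ for $i\ge 1$, it is a \emph{source} term of $\mathbf{E}$-type that cannot be absorbed by $\mathbf{D}$ because of the regularity loss; the paper controls it by an induction on $j$ (see \eqref{Ind_Hyp} and the inductive step in the proof of Theorem~\ref{Thm_Main_1_repeat}). It is precisely when $\gamma=-1/2$, that is at $i=0$, that $\gamma<0$ and the term can be moved to the left as $-\gamma\int_0^t(1+\sigma)^{\gamma-1}\mathscr{F}\,\textup{d}\sigma\ge 0$; this is the artificial damping $\int_0^t(1+\sigma)^{-3/2}\mathbf{E}^{(0)}\,\textup{d}\sigma$ stated explicitly in Proposition~\ref{Lemma_Energy_Order_0}, and it is what controls the loss of derivatives coming from the commutator term $R^{(2)}_1$. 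So the negative exponent is not the obstacle; it is the whole point of the method. Consequently your plan to bypass the $i=0$ energy estimate by a Duhamel argument replaces the one step where you do have coercivity, while leaving the genuinely delicate $i\ge 1$ levels --- where the weighted derivative term has the wrong sign --- without a mechanism.

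Two smaller remarks. First, in the paper the Duhamel representation is used for a different purpose than you indicate: not as a substitute for the $i=0$ energy estimate, but to control the time-weighted $L^2$-decay quantity $\mathdutchcal{M}[v,w,\vecc{U}]$ for all $0\le j\le s_0$ (Theorem~\ref{Thm_Mcal}); that bound then feeds into the nonlinear terms of the weighted energy inequality via Lemma~\ref{M_estimate}. Second, your bootstrap absorption by $\delta$ is in spirit the same as the paper's closure, which combines \eqref{Main_Estimate_E_D_Main} and \eqref{M_weighted_estimate_Main} into a cubic inequality $\mathbb{Y}^2\lesssim\delta_0^2+\mathbb{Y}^3$ and invokes Lemma~\ref{Lemma_Stauss}, so that part of your outline is sound.
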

\subsection{Discussion of the main result} Before moving onto the proof, we discuss the claims made above.
\begin{itemize}[leftmargin=0.45cm]	  
\item The global well-posedness stated above requires initial data to be significantly smother than in the subcritical case; see~\cite{nikolic2020jordan} for the analysis when $b> \tau c^2$. In the proof of Theorem~\ref{Main_Theorem}, we will rely on the decay rate of the linearized problem to build appropriate time-weighted norms. The above higher-regularity assumption is thus justified by the loss of regularity in the linearization, although possibly not optimal.  	
\item Although vector $\vecc{U}$ contains $\nabla v$, decay estimate for $\nabla^j v$ given in \eqref{Estimate_Decay_v} is better than the one that follows from \eqref{Decay_Estimates}. Estimate \eqref{Estimate_Decay_v} also contains the decay rate for $\Vert v\Vert_{L^2}$, which cannot be deduced from \eqref{Decay_Estimates}. Moreover, \eqref{Estimate_Decay_w} reveals a better decay rate for $\Vert \nabla^ j w\Vert_{L^2}$.
\item   It is known that solutions of the linear wave equation in $\R^n$, where both linear damping and memory damping with the exponentially decaying kernel are present, have a decay rate $(1+t)^{-n/4}$ in the $L^2$ norm; see~\cite{DRK_2010}. It is also known that the same decay can be recovered if only one of the above-mentioned damping mechanisms is present in the equation; see \cite{Matsu_1977, CGP07}. Thus combining damping mechanisms does not necessarily lead to an improved decay rate.\\  
\indent For the linear MGT equations without memory (i.e., when $g=0$), the same decay rates as in the theorem above are optimal;  see~\cite{PellSaid_2019_1}. For small enough data, the nonlinear problem's solution should obey the same decay rate. As in the linear damped wave equation, the memory term is not expected to affect this. We thus expect the estimates of Theorem~\ref{Main_Theorem} to be sharp. However, the decay estimates hold for a solution with a lower Sobolev regularity than that of the initial data. This is common when there is a loss of regularity in the linearized problem; see, for instance, \cite{IK08, Duan_Ruan_Zhu_2012, Racke_Said_2012_1}.  
\item As mentioned before, in~\cite{lasiecka2017global}, the nonlocal JMGT equation in the critical case on smooth bounded domains is stabilized by considering a memory term that combines both $\Delta \psi$ and $\Delta \psi_t$:
\begin{align} \label{mixed_memory}
\int_0^\infty g(r)\Delta (\psi+\tfrac{c^2}{b}\psi_t)(t-r)\, \ds.
\end{align} 
We expect that our analysis can be extended to a memory acting on $\psi_t$ only or a mixed-type of memory like \eqref{mixed_memory} as well. 
\item In the subcritical case, global existence can be obtained without relying on the time decay of the linearized problem; see~\cite{nikolic2020jordan}. However, here due to the regularity loss, we have to rely on the time decay of the solution to the linear problem to construct appropriate time-weighted norms. For this reason, linear estimates of Propositions \ref{Proposition_Linear},  \ref{Decay_w_New}, and \ref{Lemma_decay_v_infty} below are crucial in the proof of the main result.      
\end{itemize}      
\subsection{The semigroup framework and short-time existence} 
We next briefly recall the semigroup framework that allows us to prove short-time well-posedness of the problem; we refer to~\cite{nikolic2020jordan} for more details. \\
\indent We adapt the functional framework of~\cite{dell2016moore} to our setting and introduce the weighted $L^{2}$ spaces,
\begin{equation}
L^2_{\tilde{g}}=L^{2}_{\tilde{g}}(\mathbb{R}^{+}, L^2(\mathbb{R}^{n}))
\end{equation}
with three types of weights: $\tilde{g} \in \{g, -g' , g''\}$; see also~\cite{nikolic2020jordan, nikolic2020mathematical}. The space is endowed with the inner product 
\begin{equation}
\left(\eta,\tilde{\eta} \right)_{L^2, \tilde{g}}=%
\displaystyle\int_{0}^{\infty}\tilde{g}(r)\left(
\eta(r),\tilde{\eta}(r)\right)_{L^{2}(\mathbb{R}%
	^{n})}\ds  
\end{equation}
for $\eta, \tilde{\eta} \in L^2_{\tilde{g}}$, and the corresponding norm is 
\begin{equation}
\Vert\eta\Vert^{2}_{L^2, \tilde{g}}=\int_{0}^{\infty}\tilde{g}(r)\Vert%
\eta(r)\Vert_{L^{2}}^{2}\ds.
\end{equation} 
For an integer $m \geq 1$, we also introduce the Hilbert spaces
\begin{equation} \label{Hs}
\begin{aligned}
\mathcal{H}^{s-1}=&\, \begin{multlined}[t] \{ \psi: \ D^\alpha \psi \in L^2(\R^n), \  1 \leq |\alpha| \leq s\} \times H^{s}(\R^n) \times {H}^{s-1}(\R^n)
\times \mathcal{M}^s,
\end{multlined}
\end{aligned}
\end{equation}
where 
\begin{equation}
\mathcal{M}^s= \{\eta: \ D^\alpha \eta \in L^2_{-g'}, \  1 \leq |\alpha| \leq s\}.  
\end{equation}
We recall that we have assumed $n \geq 3$ and that the homogeneous Sobolev space $\dot{H}^s(\R^n)$ is not complete when $s\geq \frac{n}{2}$; see \cite[\S 1]{bahouri2011fourier}. The space $\mathcal{H}^{s-1}$ is equipped with the norm
\begin{equation} \label{norm_Hm-1}
\begin{aligned}
\|\vecc{\Psi}\|^2_{\mathcal{H}^{s-1}}=\|\nabla \psi\|^2_{H^{s-1}}+\|v\|^2_{H^{s}}+\|w\|^2_{H^{s-1}}+\|\nabla \eta\|^2_{H^{s-1},-g'},
\end{aligned}
\end{equation}
where
\begin{equation}
\|\nabla \eta\|^2_{H^{s-1},-g'} = \sum_{i=1}^{s-1} \|\nabla^{(i)} \eta\|^2_{L^2,-g'}.
\end{equation}
We remark that 
\begin{equation}
\begin{aligned}
\|\vecc{\Psi}\|^2_{\mathcal{H}^{s-1}}\lesssim&\, \begin{multlined}[t]
\Vert\nabla(\psi +\tau v)\Vert^{2}_{H^{s-1}}+\Vert  v+\tau w\Vert^{2}_{H^{s-1}} +\Vert \nabla v\Vert^{2}_{H^{s-1}}\\+\Vert w\Vert^2_{H^{s-1}}+\|\nabla \eta\|^2_{H^{s-1},-g'} \end{multlined}\\
\lesssim&\, ||| \vecc{\Psi} |||_{H^s}^2, 
\end{aligned}     
\end{equation}  
recalling that the norm $|||\cdot |||_{H^s}$ is defined in \eqref{1}. To rewrite problem \eqref{Main_System} as an abstract first-order evolution equation, we introduce the operator 
\begin{equation}
\begin{aligned}
\mathcal{A}\begin{bmatrix}
\psi \\[1mm]
v \\[1mm]
w \\[3mm]
\eta 
\end{bmatrix} = \begin{bmatrix}
v \\[1mm]
w \\[1mm]
-\dfrac{1}{\tau} w+ \frac{c^2_g}{\tau} \Delta \psi+\frac{b}{\tau} \Delta v+\frac{1}{\tau}\displaystyle \int_0^\infty g(r)\Delta \eta(r) \ds \\[3mm]
v+\mathbb{T} \eta
\end{bmatrix}  
\end{aligned}
\end{equation}
with the domain
\begin{equation} \label{D(A)_m}
\begin{aligned}
D(\mathcal{A})
=\,  \left\{\vecc{\Psi} \in \mathcal{H}^{s-1} \left\vert\rule{0cm}{1cm}\right. \begin{matrix}
w \in H^{s}(\R^n), \\[2mm]
\dfrac{c^2_g}{\tau} \Delta \psi+\dfrac{b}{\tau} \Delta v+\dfrac{1}{\tau}\displaystyle \int_0^\infty g(r)\Delta \eta(r)  \in H^{s-1}(\R^n), \\[4mm]
\eta \in D(\mathbb{T}) \end{matrix} \right\};
\end{aligned}
\end{equation}  
cf.~\cite{dell2016moore, nikolic2020jordan}. Here the linear operator $\mathbb{T}$ is given by
\begin{equation} \label{def_T_eta}
\mathbb{T}\eta=-\eta_r,
\end{equation}
and has the domain
\begin{equation}
D(\mathbb{T})=\{\eta\in%
\mathcal{M}^s\, \big| \ \eta_r\in \mathcal{M}^s,\ \eta(r=0)=0\},
\end{equation}
where the index $r$ stands for the distributional derivative with respect to the variable $r$; cf.~\cite{dell2016moore}. Then we can formally see $\vecc{\Psi}$ as the solution to
\begin{equation} \label{abstract_evol_eq}
\begin{aligned}
\begin{cases}
\dfrac{\textup{d}}{\dt}\vecc{\Psi}(t)= \mathcal{A} \vecc{\Psi}(t)+\mathbb{F}(\vecc{\Psi}, \nabla \vecc{\Psi}),\quad t>0, \vspace{0.2cm}\\[1mm] 
\vecc{\Psi}(0)=\vecc{\Psi}_0,
\end{cases} 
\end{aligned}
\end{equation}
with the nonlinear term given by
\begin{equation} \label{def_F}
\begin{aligned}
\mathbb{F}(\vecc{\Psi}, \nabla \vecc{\Psi})= \frac{2}{\tau} \, \big[
0,\
0 ,\
kv w+ \nabla \psi \cdot \nabla v ,\
0 
\big]^T.
\end{aligned}
\end{equation}
It is known that the operator $\mathcal{A}$ generates a strongly continuous semigroup.
\begin{proposition}[see Theorem 4.1 in~\cite{nikolic2020jordan}]
	Let $b \geq \tau c^2 > \tau c^2_g$ and $s \geq 1$. Assume that $n \geq 3$. Then the linear operator $\mathcal{A}$
	is the infinitesimal generator of a linear $\textup{C}_0$-semigroup $ S(t) = e^{t\mathcal{A}}: \, \mathcal{H}^{s-1} \rightarrow \mathcal{H}^{s-1}.$
\end{proposition}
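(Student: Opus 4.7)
The plan is to apply the Lumer--Phillips generation theorem after endowing $\mathcal{H}^{s-1}$ with an equivalent inner product tailored to the natural energy of system \eqref{Main_System}. Concretely, for $\vecc{\Psi}=(\psi,v,w,\eta)^T$, an appropriate inner product at the base level involves the pieces
\[
c_g^2(\nabla\psi,\nabla\tilde{\psi})+(b-\tau c_g^2)(\nabla v,\nabla\tilde{v})+\tau(v+\tau w, \tilde{v}+\tau\tilde{w}) + \int_0^\infty g(r)(\nabla\eta(r),\nabla\tilde{\eta}(r))\,\ds,
\]
together with analogous contributions for the derivatives $D^\alpha\vecc{\Psi}$ with $1\leq|\alpha|\leq s$, chosen so that the cross-terms produced by the third-order block $\tau w_t=-w+c_g^2\Delta\psi+b\Delta v+\int_0^\infty g(r)\Delta\eta\,\ds$ telescope under integration by parts. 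The resulting norm is equivalent to \eqref{norm_Hm-1} under the hypothesis $b\geq \tau c^2>\tau c_g^2$, since the shifted variable $v+\tau w$ captures the degeneracy of the critical case.

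Next I would verify dissipativity: compute $\langle \mathcal{A}\vecc{\Psi},\vecc{\Psi}\rangle_{*}$ componentwise, using integration by parts on $\R^n$ (no boundary terms) to cancel the interactions between the $\psi,v,w$ components, and handling the memory cross-terms $\int_0^\infty g(r)(\Delta\eta,\cdot)\,\ds$ together with the transport term $(v+\mathbb{T}\eta,\eta)_{L^2,-g'}$. The key input here is assumption $(\textup{G3})$: integration by parts in $r$ for $\mathbb{T}\eta=-\eta_r$ combined with $\eta(r=0)=0$ and $g'\leq -\zeta g$ yields
\[
\int_0^\infty g(r)(\nabla\eta,\nabla(-\eta_r))\,\ds = \tfrac{1}{2}\int_0^\infty g'(r)\|\nabla\eta(r)\|_{L^2}^2\,\ds\leq -\tfrac{\zeta}{2}\int_0^\infty g(r)\|\nabla\eta(r)\|_{L^2}^2\,\ds,
\]
which produces a sign-definite dissipation from the history term. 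With the right weighting, this plus the damping term $-w$ in the third equation delivers $\langle \mathcal{A}\vecc{\Psi},\vecc{\Psi}\rangle_{*}\leq \omega\|\vecc{\Psi}\|_{*}^2$ for some $\omega\geq 0$.

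The range condition is then obtained by solving $(\lambda I-\mathcal{A})\vecc{\Psi}=\vecc{F}=(f_1,f_2,f_3,f_4)^T$ for $\lambda>\omega$ sufficiently large. Using the first two equations, $v=\lambda\psi-f_1$ and $w=\lambda v-f_2$, and the transport equation $\lambda\eta+\eta_r=v+f_4$ (whose explicit solution is $\eta(r)=\int_0^r e^{-\lambda(r-\sigma)}(v(\cdot)+f_4(\cdot,\sigma))\,\textup{d}\sigma$, lying in $D(\mathbb{T})$ by the Young/convolution bound weighted by $-g'$), reduces the third equation to a strongly elliptic equation of the form
\[
(\tau\lambda^3+\lambda^2)\psi - \left(c_g^2+\lambda b+\textstyle\int_0^\infty g(r)(1-e^{-\lambda r})\,\ds/\lambda\right)\Delta\psi = \tilde{F},
\]
with right-hand side $\tilde{F}$ built from the $f_i$ and lying in $H^{s-1}(\R^n)$. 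Standard Fourier/elliptic theory on $\R^n$ solves for $\psi$ with $\nabla\psi\in H^{s-1}$, and backward substitution recovers $v\in H^s$, $w\in H^{s-1}$ and $\eta\in D(\mathbb{T})$; in particular, the regularity $w\in H^s$ and the combined Laplacian expression in \eqref{D(A)_m} follow from this construction. The hardest part is the careful design of the equivalent inner product for the full Sobolev range $1\leq|\alpha|\leq s$, so that the coupling terms cancel while preserving equivalence to $\|\cdot\|_{\mathcal{H}^{s-1}}$ even in the critical regime $b=\tau c^2$; once this is in place, Lumer--Phillips delivers the $C_0$-semigroup $S(t)=e^{t\mathcal{A}}$ on $\mathcal{H}^{s-1}$.
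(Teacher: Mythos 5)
Your overall plan—equip $\mathcal{H}^{s-1}$ with an equivalent energy inner product, verify quasi-dissipativity of $\mathcal{A}$ using integration by parts in $r$ together with $\eta(r=0)=0$, and then close the argument with a Lumer--Phillips range condition solved by reducing the resolvent equation to a scalar elliptic problem in $\psi$—is the same route that the cited work follows, and the high-level structure is sound.

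There is, however, a concrete gap in the choice of the $\eta$-piece of the inner product. You take $\int_0^\infty g(r)(\nabla\eta,\nabla\tilde\eta)\,\ds$, i.e.\ the $g$-weighted pairing, but the ambient space $\mathcal{H}^{s-1}$ is built over $\mathcal{M}^s$, which carries the $-g'$-weighted norm (see \eqref{Hs} and \eqref{norm_Hm-1}). Assumption $(\textup{G3})$ gives $-g'(r)\geq \zeta\,g(r)$, hence $\|\nabla\eta\|^2_{L^2,g}\lesssim\|\nabla\eta\|^2_{L^2,-g'}$, but \emph{not} the converse; nothing in $(\textup{G1})$--$(\textup{G4})$ forces $-g'\lesssim g$. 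Therefore the norm induced by your inner product is in general strictly weaker than $\|\cdot\|_{\mathcal{H}^{s-1}}$, and Lumer--Phillips applied with it does not produce a $C_0$-semigroup on $\mathcal{H}^{s-1}$. This is precisely why the paper's tailored energies $E_1^{(\kappa)}$ and $E_2^{(\kappa)}$ in \eqref{energy}--\eqref{E_2} carry \emph{both} a $\tau\,\|\cdot\|^2_{L^2,-g'}$ term and a $\|\cdot\|^2_{L^2,g}$ term for $\eta$, together with the cross-term $2\tau\int_{\R^n}\int_0^\infty g(r)\,\nabla\eta\cdot\nabla v\,\ds\dx$: the $-g'$-weight is what makes the induced norm equivalent to $\|\cdot\|_{\mathcal{H}^{s-1}}$, the $g$-weight and cross-term are what makes the memory coupling in the $w$-equation cancel against the $v$-contribution from $\eta_t=v-\eta_r$. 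You need this combined structure; a pure $g$-weight does not achieve both. A related omission is that the base-level inner product you list does not control $\|w\|_{L^2}$ (nor $\|v\|_{L^2}$ directly), whereas $\mathcal{H}^{s-1}$ requires $v\in H^s$ and $w\in H^{s-1}$; an extra term of the form $\varepsilon\tau\|w\|_{L^2}^2$ (as in \eqref{Lyapunov_F_crit_0}) is needed for equivalence. Finally, a small algebraic slip in the range-condition reduction: after substituting $v=\lambda\psi-f_1$ and $\eta(r)=\frac{v}{\lambda}(1-e^{-\lambda r})+\cdots$, the $\Delta\psi$-coefficient coming from the memory is $\int_0^\infty g(r)(1-e^{-\lambda r})\,\textup{d}r$, not $\int_0^\infty g(r)(1-e^{-\lambda r})\,\textup{d}r/\lambda$; the factor $1/\lambda$ cancels against the $\lambda$ in $v=\lambda\psi-f_1$.
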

We also recall that our problem is well-posed in the critical case for a sufficiently short final time.
\begin{theorem}[see Theorem 5.1. in~\cite{nikolic2020jordan}] \label{Thm:LocalExistence}
	Let $b \geq \tau c^2> \tau c^2_g$ and $n \geq 3$. Assume that $\vecc{\Psi}_0 \in \mathcal{H}^{s-1}$ for an integer $s>n/2+1$. 
	Then there exists a final time $T=T(\|\vecc{\Psi}_0\|_{\mathcal{H}^{s-1}})$,
	such that problem \eqref{Main_whole} admits a unique mild solution 
	\begin{equation}
	\vecc{\Psi}=(\psi, v ,w, \eta)^T \in C([0,T]; \mathcal{H}^{s-1}),
	\end{equation} 
	given by       
	\begin{equation} \label{IP_Psi0}
	\vecc{\Psi}= e^{t\mathcal{A}}\vecc{\Psi}_0+\int_0^te^{(t-r)\mathcal{A}}\mathbb{F}(\vecc{\Psi}, \nabla \vecc{\Psi})(r)\, \textup{d}r,  
	\end{equation}  
	where the functional $\mathbb{F}$ is defined in \eqref{def_F}. 
\end{theorem}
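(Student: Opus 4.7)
The plan is to prove short-time well-posedness through a standard contraction mapping argument built on the semigroup generation result for $\mathcal{A}$ stated in the preceding proposition. I would work with the integral (mild) formulation \eqref{IP_Psi0} and set up a fixed-point map
\[
\mathcal{T}[\vecc{\Psi}](t) = e^{t\mathcal{A}}\vecc{\Psi}_0 + \int_0^t e^{(t-r)\mathcal{A}} \mathbb{F}(\vecc{\Psi}, \nabla \vecc{\Psi})(r) \, \textup{d}r
\]
on the closed ball
\[
\mathcal{B}_{R,T} = \{\vecc{\Psi} \in C([0,T]; \mathcal{H}^{s-1}) : \vecc{\Psi}(0) = \vecc{\Psi}_0, \ \sup_{t \in [0,T]} \|\vecc{\Psi}(t)\|_{\mathcal{H}^{s-1}} \leq R\},
\]
with $R = 2M \|\vecc{\Psi}_0\|_{\mathcal{H}^{s-1}}$, where $M$ is an operator-norm bound on the $C_0$-semigroup $e^{t\mathcal{A}}$ over a fixed reference interval. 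The goal is to choose $T=T(\|\vecc{\Psi}_0\|_{\mathcal{H}^{s-1}})$ small enough that $\mathcal{T}$ maps $\mathcal{B}_{R,T}$ into itself and is a strict contraction.

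The two things I need to verify carefully are (i) $\mathbb{F}(\vecc{\Psi}, \nabla \vecc{\Psi})$ takes values in $\mathcal{H}^{s-1}$, and (ii) $\mathbb{F}$ is locally Lipschitz on $\mathcal{H}^{s-1}$. Looking at \eqref{def_F}, the only nonzero component is $\tfrac{2}{\tau}(kvw + \nabla\psi\cdot\nabla v)$, which must live in $H^{s-1}(\R^n)$. Under the assumption $s > n/2 + 1$, the Sobolev embedding $H^{s-1}(\R^n) \hookrightarrow L^\infty(\R^n)$ holds and $H^{s-1}$ is a Banach algebra. Using $v \in H^s$, $w \in H^{s-1}$ and $\nabla \psi, \nabla v \in H^{s-1}$, the tame product estimate
\[
\|fg\|_{H^{s-1}} \lesssim \|f\|_{H^{s-1}}\|g\|_{L^\infty} + \|f\|_{L^\infty}\|g\|_{H^{s-1}}
\]
yields $\|\mathbb{F}(\vecc{\Psi}, \nabla\vecc{\Psi})\|_{\mathcal{H}^{s-1}} \lesssim \|\vecc{\Psi}\|_{\mathcal{H}^{s-1}}^2$. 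The local Lipschitz bound
\[
\|\mathbb{F}(\vecc{\Psi}_1, \nabla\vecc{\Psi}_1) - \mathbb{F}(\vecc{\Psi}_2, \nabla\vecc{\Psi}_2)\|_{\mathcal{H}^{s-1}} \lesssim (\|\vecc{\Psi}_1\|_{\mathcal{H}^{s-1}} + \|\vecc{\Psi}_2\|_{\mathcal{H}^{s-1}}) \|\vecc{\Psi}_1 - \vecc{\Psi}_2\|_{\mathcal{H}^{s-1}}
\]
follows by writing the differences of products $v_1 w_1 - v_2 w_2$ and $\nabla\psi_1\cdot\nabla v_1 - \nabla\psi_2 \cdot \nabla v_2$ in standard telescoping form and applying the same algebra estimate.

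Combining these with the semigroup bound $\|e^{t\mathcal{A}}\|_{\mathcal{H}^{s-1}\to\mathcal{H}^{s-1}}\leq M e^{\omega t}$ on $[0,T]$, I would obtain for $\vecc{\Psi} \in \mathcal{B}_{R,T}$:
\[
\|\mathcal{T}[\vecc{\Psi}](t)\|_{\mathcal{H}^{s-1}} \leq M e^{\omega T} \|\vecc{\Psi}_0\|_{\mathcal{H}^{s-1}} + C_1 T M e^{\omega T} R^2,
\]
and for $\vecc{\Psi}_1, \vecc{\Psi}_2 \in \mathcal{B}_{R,T}$,
\[
\sup_{t\in[0,T]}\|\mathcal{T}[\vecc{\Psi}_1](t) - \mathcal{T}[\vecc{\Psi}_2](t)\|_{\mathcal{H}^{s-1}} \leq C_2 T M e^{\omega T} R \sup_{t\in[0,T]}\|\vecc{\Psi}_1(t) - \vecc{\Psi}_2(t)\|_{\mathcal{H}^{s-1}}.
\]
Choosing $T$ sufficiently small—depending only on $\|\vecc{\Psi}_0\|_{\mathcal{H}^{s-1}}$ through $R$—both the self-mapping ($C_1 T M e^{\omega T} R \leq \tfrac{1}{2}$) and contraction ($C_2 T M e^{\omega T} R \leq \tfrac{1}{2}$) conditions hold, so Banach's fixed-point theorem delivers a unique $\vecc{\Psi} \in \mathcal{B}_{R,T}$ satisfying \eqref{IP_Psi0}, which is precisely the mild solution. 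Uniqueness in the whole space $C([0,T]; \mathcal{H}^{s-1})$ (not just in the ball) follows from a Grönwall argument applied to the Lipschitz estimate above.

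The main technical obstacle is the product estimate for the quadratic gradient nonlinearity $\nabla \psi \cdot \nabla v$: the factor $\nabla \psi$ only belongs to $H^{s-1}(\R^n)$ through the norm $\|\nabla \psi\|_{H^{s-1}}$ defined in \eqref{norm_Hm-1} (there is no a priori $L^2$ control on $\psi$ itself), so it is crucial that the tame product estimate and $L^\infty$ embedding only require $H^{s-1}$-regularity of $\nabla \psi$ rather than of $\psi$. This is exactly why the natural functional framework in \eqref{Hs} is formulated in terms of the homogeneous-looking space $\{\psi : D^\alpha \psi \in L^2, \ 1 \leq |\alpha| \leq s\}$ rather than $H^s$, and aligning the fixed-point argument with this choice—while keeping the history variable $\eta$ in $\mathcal{M}^s$ under the flow of $\mathbb{T}$—is the most delicate bookkeeping step.
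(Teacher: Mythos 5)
Your proposal is correct and is essentially the approach that underlies the cited result: a Banach fixed-point argument in $C([0,T];\mathcal{H}^{s-1})$ built on the $C_0$-semigroup generated by $\mathcal{A}$ together with the fact that, since $s>n/2+1$ makes $H^{s-1}(\R^n)$ a Banach algebra, the nonlinearity $\mathbb{F}$ (which is nonzero only in the third slot and involves only $v$, $w$, $\nabla\psi$, $\nabla v$) is quadratically bounded and locally Lipschitz on $\mathcal{H}^{s-1}$. Your closing observation that the functional setting deliberately measures $\psi$ only through $\|\nabla\psi\|_{H^{s-1}}$, so that no $L^2$-control on $\psi$ itself is required, is precisely the reason the space $\mathcal{H}^{s-1}$ is set up as in \eqref{Hs}, and your bookkeeping is consistent with that choice.
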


\section{Auxiliary estimates of a regularity-loss type \\ for the corresponding linearization}   \label{Sec:EstLin} 
We intend to use the mild solution  \eqref{IP_Psi0} of the nonlinear problem to establish its asymptotic behavior. Therefore, much of the decay analysis will transfer to the results for the corresponding linear version of the JMGT equation. For this reason, we next recall known decay estimates and derive new ones for the following linearization:
\begin{equation}  \label{Main_System_linear}
\begin{cases}
\psi_{t}=v, \\ 
v_{t}=w, \\ 
\tau w_{t}=- w+c^2_g\Delta \psi+b\Delta v + \displaystyle%
\int_{0}^{\infty}g(r)\Delta\eta(r)\ds,
\\ 
\eta_{t}=v-\eta_{r},%
\end{cases}%
\end{equation}
with the same initial data as in \eqref{Main_System_IC}. To formulate the results, we set
\begin{equation} \label{def_U}
\vecc{U} =(v+\tau w,\nabla(\psi+\tau v),\nabla v)^T,
\end{equation}
where in this section $(\psi, v, w, \eta)^T$ solves the linear problem \eqref{Main_System_linear}. We choose to work with the vector $\vecc{U}$ here instead of the solution vector because the $L^2$ norm of $\vecc{U}$ and its corresponding $H^s$ norm define energy norms. These norms are known to satisfy certain decay estimates with respect to time, which we next recall. \\
\indent To state the decay estimates for the linearized problem, we also introduce the following energy norm: 
\begin{equation} \label{norm_1}
\begin{aligned}
\left\Vert \nabla^j\vecc{\Psi}\right\Vert _{\mathrm{H}}^{2} =& \begin{multlined}[t]\, \Vert
\nabla^{j+1} v\Vert _{L^{2}}^{2}+\Vert \nabla^{j+1} (\psi+\tau v)\Vert
_{L^{2}}^{2}
 +\Vert \nabla^j(v+\tau w)\Vert _{L^{2}}^{2}+\Vert \nabla^{j+1} \eta\Vert^{2}_{L^2, -g'}. \end{multlined}
\end{aligned}
\end{equation}
We note that this norm relates to $\|\cdot\|_{\mathcal{H}^{m-1}}$, defined in \eqref{norm_Hm-1}, through the inequality
 \[\left\Vert \nabla^{m-1}\vecc{\Psi}\right\Vert _{\mathrm{H}}\leq \|\vecc{\Psi}\|_{\mathcal{H}^{m-1}}.\]

\begin{proposition}  [See Theorem 3.2 in~\cite{Bounadja_Said_2019}]\label{Proposition_Linear} 
	Let $b=\tau c^2$. Suppose that \[\vecc{U}_{0} \in (H^{s}(\mathbb{R}^n)\cap L^{1}(\mathbb{R}^n))^3\] for an integer $s\geq 0$. Then for all $j= 0, 1,2,\dots s-\ell$, it holds 
	\begin{align} \label{decay_loss}
	\Vert\nabla^{j}\vecc{U}(t)\Vert _{L^{2}}\lesssim\Vert \nabla^j\vecc{\Psi}(t)   \Vert_{\mathrm{H}}\lesssim  (1+t)^{-{n}/{4-{j}/{2}}}\Vert \vecc{U}_{0}\Vert _{L^{1}} +(1+t)^{-\ell/2} \Vert\nabla^{j+\ell}\vecc{U}_{0}\Vert _{L^{2}},         
	\end{align} 
	where $\ell\leq s$. 
\end{proposition}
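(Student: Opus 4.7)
The plan is to work in Fourier space, where the linearised system \eqref{Main_System_linear} decouples into a family of ODEs (coupled with a first-order transport equation in $r$ for the memory variable) parametrised by $\xi\in\R^n$. Writing $\hat\psi,\hat v,\hat w,\hat\eta$ for the spatial Fourier transforms, I would look for a pointwise-in-$\xi$ Lyapunov functional $\hat E(\xi,t)$ equivalent to the Fourier density of $\|\nabla^j\vecc{\Psi}\|_{\mathrm{H}}^2$ at frequency $\xi$, and prove the differential inequality
\begin{equation*}
\frac{d}{dt}\hat E(\xi,t)+c_0\,\rho(|\xi|)\,\hat E(\xi,t)\leq 0,\qquad \rho(r)=\frac{r^2}{(1+r^2)^2}.
\end{equation*}
The shape of $\rho$ is the hallmark of regularity loss: dissipation degenerates both as $|\xi|\to 0$ (slow low-frequency decay) and as $|\xi|\to\infty$ (loss of derivatives at high frequencies).

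To construct $\hat E$ I would first run the standard energy identity: multiply the transformed $\tau w_t$-equation by $\overline{\hat v+\tau\hat w}$ (the MGT-type variable whose $L^2$ norm would decay exponentially in the subcritical case) and take real parts. Using the critical relation $b=\tau c^2=\tau c_g^2+\tau\int_0^\infty g(r)\ds$, the potential terms regroup as $c_g^2\Delta(\psi+\tau v)+\tau\int g\,\Delta v+\int g\,\Delta\eta$, and testing the $\eta$-equation against $-g'\nabla\hat\eta$ together with assumption (G3) produces the memory dissipation $\int_0^\infty(-g')|\nabla\hat\eta|^2\ds$. This basic identity dissipates only $|\hat w|^2$ and the memory term; to extract dissipation of $|\xi|^2|\hat v|^2$ and $|\xi|^2|\widehat{\psi+\tau v}|^2$, I would introduce cross-term multipliers in the spirit of Ide--Haramoto--Kawashima --- one involving $\mathrm{Re}(\hat v\,\overline{\hat w})$ and one pairing $\hat v$ against the memory integral --- each weighted by a small power of $|\xi|^2/(1+|\xi|^2)$ so that the high-frequency error terms they produce can be absorbed by the good terms already controlled.

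Once the pointwise estimate $\hat E(\xi,t)\lesssim e^{-c_0\rho(|\xi|)t}\hat E(\xi,0)$ is established, the claim follows by Plancherel combined with a low/high frequency split. On $\{|\xi|\leq 1\}$ we have $\rho(|\xi|)\asymp|\xi|^2$, and the standard heat-kernel computation (bounding $|\hat{\vecc U}_0(\xi)|$ by $\|\vecc U_0\|_{L^1}$) gives the low-frequency contribution $(1+t)^{-n/4-j/2}\|\vecc U_0\|_{L^1}$. On $\{|\xi|>1\}$ we have $\rho(|\xi|)\asymp|\xi|^{-2}$, and trading decay for regularity via the elementary inequality
\begin{equation*}
e^{-c_0 t/|\xi|^2}\leq C_\ell (1+t)^{-\ell}|\xi|^{2\ell}
\end{equation*}
produces the regularity-loss term $(1+t)^{-\ell/2}\|\nabla^{j+\ell}\vecc U_0\|_{L^2}$. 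Summing the two contributions and taking square roots yields the stated bound; the case $j>0$ follows by inserting a factor $|\xi|^{2j}$ throughout.

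The principal obstacle is keeping track of the $|\xi|$-weights in the cross-term multipliers so that the aggregated dissipation is bounded below by exactly $\rho(|\xi|)\hat E(\xi,t)$ rather than a weaker quantity. In the subcritical regime the basic energy identity already furnishes dissipation $\chi|\hat w|^2$ with a constant independent of $\xi$, which makes this bookkeeping routine; in the critical case $\chi=0$ that term is absent and $|\hat w|^2$ dissipation must be reconstructed \emph{entirely} from the memory through carefully tuned auxiliary multipliers. This reconstruction is precisely what is responsible for the regularity-loss profile $\rho(|\xi|)\sim|\xi|^{-2}$ as $|\xi|\to\infty$ and, consequently, for the appearance of the $\ell$-derivative penalty in the second term of \eqref{decay_loss}.
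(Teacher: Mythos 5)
Your proposal correctly reconstructs the standard Fourier-space energy argument of the cited reference~\cite{Bounadja_Said_2019}, which the paper itself does not reprove (it only cites Theorem 3.2 there; the same machinery resurfaces in Appendix~\ref{Appendix_B} for the companion estimate on $w$). You identify the right dissipation profile $\rho(|\xi|)=|\xi|^2/(1+|\xi|^2)^2$, the right construction of the pointwise Lyapunov functional via cross-term (Ide--Haramoto--Kawashima-type) multipliers, the fact that $|\hat w|^2$-dissipation must be recovered purely from the memory term when $\chi=0$, and the low/high frequency split with $\sup_{|\xi|\geq 1}|\xi|^{-2\ell}e^{-c\,t/|\xi|^2}\lesssim(1+t)^{-\ell}$ producing the regularity-loss term. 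The only cosmetic slip is the passage ``This basic identity dissipates only $|\hat w|^2$ and the memory term,'' which contradicts what you correctly say in the final paragraph (in the critical case the $|\hat w|^2$-dissipation from the base identity vanishes); similarly, the standard multiplier for the memory dissipation is $g\,\overline{\hat\eta}$ (with the $(-g')$-weighted dissipation appearing after an $r$-integration by parts), rather than $(-g')\nabla\hat\eta$ as you wrote. Neither issue affects the validity of the argument.
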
  
Estimate \eqref{decay_loss} has the decay property of regularity-loss type. In other words, it holds for a solution with a lower regularity than that of the initial data. In fact, we need $\vecc{U}_0$ to be at least in $\left(L^1(\R^n)\cap H^{[n/2]+1}(\R^n)\right)^3$, which is a significant regularity gap compared to the subcritical case, where the same decay rate of the solution for the linearized  problem holds for $\vecc{U}_0$ in $\left(L^1(\R^n)\cap L^2(\R^n)\right)^3$; cf.~\cite{Bounadja_Said_2019}. It is well-known that this gap creates a  major difficulty when dealing with the corresponding nonlinear problems. To get around this obstacle, we should carefully design time-weighted  norms and combine the energy method in the analysis with a time-weight with a negative power. \\
\indent We next give an  estimate of $\Vert \nabla ^j w\Vert_{L^2}$, which will be used to motivate the decay rate in the nonlinear equation.   
\begin{proposition}\label{Decay_w_New}
	Let the assumptions of Proposition~\ref{Proposition_Linear} hold with $s \geq 1$. Let \[w(t=0) = \psi_2 \in H^s(\R^n).\] Then for any integer $\ell \leq s-1$ and any $j \in \{0,\dots,s-\ell-1\}$, it holds
	\begin{equation}\label{Decay_estimate_W} 
\Vert\nabla^{j}w(t)\Vert_{L^{2}}\lesssim  (\Vert \nabla^j \psi_2 \Vert_{L^2}+\Vert \mathbf{U}_{0}\Vert_{L^{1}})(1+t)^{-\frac{n}{4}-\frac{1}{2}-\frac{j}{2}}
	+(1+t)^{-\frac{\ell}{2}}\Vert\nabla^{j+\ell+1}\mathbf{U}_{0}\Vert_{L^{2}},
\end{equation}
provided that the thermal relaxation time $\tau>0$ is sufficiently small. 
\end{proposition}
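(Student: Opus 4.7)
The plan is to exploit the structure of the third equation in \eqref{Main_System_linear} to realize $w$ as the solution of a first-order linear ODE in time whose forcing carries two spatial derivatives; applying Proposition \ref{Proposition_Linear} to that forcing then yields a gain of a half in the time decay, which is exactly what accounts for the improved rate in \eqref{Decay_estimate_W}.

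Concretely, using the critical condition $b=\tau c^2$ together with the identity $c^2-c_g^2=\int_0^\infty g(r)\ds$ coming from \eqref{def_cg}, I rewrite the third equation of the linearized system as
\begin{equation*}
\tau w_t + w = c_g^2\,\Delta(\psi+\tau v) + \tau\Bigl(\textstyle\int_0^\infty g(r)\ds\Bigr)\Delta v + \int_0^\infty g(r)\,\Delta\eta(r)\ds.
\end{equation*}
By design, every term on the right-hand side is two spatial derivatives of a quantity that is controlled by $\|\nabla^{j+1}\vecc{\Psi}\|_{\mathrm{H}}$ after applying $\nabla^j$. For the memory integral I use assumption (G3) in the form $g\le -g'/\zeta$ together with Cauchy--Schwarz to obtain
\begin{equation*}
\left\|\int_0^\infty g(r)\,\nabla^{j+2}\eta(r)\ds\right\|_{L^2}\lesssim\|\nabla^{j+2}\eta\|_{L^2,-g'},
\end{equation*}
so that overall $\|\nabla^j[\textup{RHS}](s)\|_{L^2}\lesssim \|\nabla^{j+1}\vecc{\Psi}(s)\|_{\mathrm{H}}$.

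I then apply $\nabla^j$ and solve the ODE by Duhamel's formula with $w(0)=\psi_2$:
\begin{equation*}
\nabla^j w(t) = e^{-t/\tau}\nabla^j\psi_2 + \frac{1}{\tau}\int_0^t e^{-(t-s)/\tau}\,\nabla^j[\textup{RHS}](s)\,ds.
\end{equation*}
Proposition \ref{Proposition_Linear} with the index shifted to $j+1$ (valid under $j+1+\ell\le s$, which is exactly the stated assumption $j\le s-\ell-1$) then gives
\begin{equation*}
\|\nabla^{j+1}\vecc{\Psi}(s)\|_{\mathrm{H}}\lesssim(1+s)^{-n/4-1/2-j/2}\|\vecc{U}_0\|_{L^1}+(1+s)^{-\ell/2}\|\nabla^{j+1+\ell}\vecc{U}_0\|_{L^2}.
\end{equation*}
The convolution $\int_0^t e^{-(t-s)/\tau}(1+s)^{-\alpha}\,ds$ is treated by the standard splitting at $s=t/2$: on $[t/2,t]$ the polynomial factor is essentially $(1+t)^{-\alpha}$, while on $[0,t/2]$ the exponential kernel produces $e^{-t/(2\tau)}$, which is absorbed into the polynomial term. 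Combined with the trivially decaying boundary contribution $e^{-t/\tau}\|\nabla^j\psi_2\|_{L^2}$, this delivers \eqref{Decay_estimate_W}.

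The main technical hurdle is arranging the right-hand side of the rewritten third equation so that it is controlled by $\|\nabla^{j+1}\vecc{\Psi}\|_{\mathrm{H}}$ in a form compatible with the linear decay rates of Proposition \ref{Proposition_Linear}; the specific rearrangement into a multiple of $\Delta(\psi+\tau v)$ plus a lower-order piece uses the critical relation $b=\tau c^2$ in an essential way, and the differential inequality (G3) is crucial for controlling the memory integral uniformly in time. The smallness condition on $\tau$ enters when balancing the $1/\tau$ prefactor from the Duhamel formula against the constants arising in the convolution estimate and the memory bound, so that the final inequality does not degenerate as derivatives are tracked through the argument.
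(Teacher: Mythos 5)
Your proof is correct, and it takes a genuinely different route from the paper's. The paper's proof (Appendix~\ref{Appendix_B}) stays entirely in Fourier space: it derives the pointwise inequality $\frac{1}{2}\frac{\textup{d}}{\dt}\tau|\hat{w}|^2+\frac{1}{2}|\hat{w}|^2\lesssim|\xi|^2\hat{E}_1(\xi,t)$, plugs in the exponential decay $\hat{E}_1(\xi,t)\lesssim\hat{E}_1(\xi,0)\exp\!\bigl(-\lambda\tfrac{|\xi|^2}{(1+|\xi|^2)^2}t\bigr)$, solves the resulting scalar ODE for each $\xi$, and only then applies Plancherel and re-runs the low/high frequency splitting. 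You instead work in physical space: you rewrite the third equation so that the forcing is visibly two spatial derivatives higher than $\vecc{\Psi}$, bound $\|\nabla^j[\textup{RHS}]\|_{L^2}\lesssim\|\nabla^{j+1}\vecc{\Psi}\|_{\mathrm{H}}$ (using Cauchy--Schwarz and (G3) exactly as you describe for the memory integral), apply Duhamel, and then invoke Proposition~\ref{Proposition_Linear} at level $j+1$ as a black box, closing with the elementary convolution bound \eqref{Expon_Poly_Ineq}. What your approach buys is cleanliness --- you do not re-derive the frequency splitting that is already encoded in Proposition~\ref{Proposition_Linear} --- plus an unexpected bonus: the smallness of $\tau$ is not actually needed. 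The paper's Fourier-side ODE convolves $e^{-(t-s)/\tau}$ against an exponential in $s$ whose rate ranges up to $\lambda/4$, so it needs $1/\tau$ to sit above that range; your Duhamel convolution has a polynomial integrand $(1+s)^{-\alpha}$, and \eqref{Expon_Poly_Ineq} then holds for every $\tau>0$.

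One small inaccuracy in your write-up: the final paragraph asserts that $\tau$-smallness enters ``when balancing the $1/\tau$ prefactor from the Duhamel formula against the constants arising in the convolution estimate.'' This is not really the case --- the factor $1/\tau$ is a fixed, time-independent constant that the implicit constant in $\lesssim$ simply absorbs, and \eqref{Expon_Poly_Ineq} is uniform in $\gamma=1/\tau>0$. The smallness condition is a feature of the paper's Fourier-space ODE argument, not of yours; your proof in fact delivers the estimate without it. Since this stray remark is not used anywhere in the actual chain of inequalities, the argument stands.
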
 
The proof of Proposition~\ref{Proposition_Linear} is similar to the one of~\cite[Proposition 7.1]{nikolic2020mathematical} in the subcritical case with small modifications to accommodate the loss of regularity, so we postpone it to Appendix~\ref{Appendix_B}. 
\begin{proposition}\label{Lemma_decay_v_infty}
	Let the assumptions of Proposition~\ref{Decay_w_New} hold. Then for any $j \in \{0, \ldots, s-\ell-1\}$, we have
	\begin{equation}\label{v_L_2_Estimate}
	\Vert\nabla^{j}v(t)\Vert_{L^{2}}\lesssim (\Vert \nabla^j \psi_2\Vert_{L^2}+\Vert \mathbf{U}_{0}\Vert_{L^{1}})(1+t)^{-\frac{n}{4}-\frac{j}{2}}+(1+t)^{-\ell/2} \Vert\nabla^{j+\ell}\vecc{U}_{0}\Vert _{H^{1}}.
	\end{equation}
 Moreover, assuming that $\mathbf{U}_0 \in \left(L^1(\R^n) \cap H^{n+[n/2]+3}(\R^n)\right)^3$ and $ \psi_2 \in H^{[n/2]+1}(\R^n)$, it holds
 	\begin{equation} \label{v_L_infty_Estimate}
 	\begin{aligned}
 	\Vert v(t) \Vert_{L^\infty}\lesssim (\Vert \psi_2\Vert_{H^{[n/2]+1}}+\Vert \mathbf{U}_0\Vert_{L^1}+\Vert \vecc{U}_0\Vert_{H^{n+[n/2]+3}})(1+t)^{-\frac{n}{2}}.
 	\end{aligned}
 	\end{equation}  
\end{proposition}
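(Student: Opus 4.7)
The plan is to deduce both estimates from the decomposition
\[v=(v+\tau w)-\tau w,\]
exploiting that $v+\tau w$ is the first component of $\vecc{U}$ so its decay is already controlled by Proposition~\ref{Proposition_Linear}, and that Proposition~\ref{Decay_w_New} furnishes a bound on $\|\nabla^{j} w\|_{L^{2}}$ which is strictly faster by one extra power of $(1+t)^{-1/2}$. The pointwise bound \eqref{v_L_infty_Estimate} will then be obtained from the $L^{2}$ bound \eqref{v_L_2_Estimate} by a Gagliardo--Nirenberg interpolation.

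For \eqref{v_L_2_Estimate}, I would apply $\nabla^{j}$ to the decomposition and write
\[\|\nabla^{j} v\|_{L^{2}}\leq \|\nabla^{j}(v+\tau w)\|_{L^{2}}+\tau\,\|\nabla^{j} w\|_{L^{2}}.\]
Proposition~\ref{Proposition_Linear} controls the first summand by $(1+t)^{-n/4-j/2}\|\vecc{U}_{0}\|_{L^{1}}+(1+t)^{-\ell/2}\|\nabla^{j+\ell}\vecc{U}_{0}\|_{L^{2}}$, while Proposition~\ref{Decay_w_New} controls the second by $(\|\nabla^{j}\psi_{2}\|_{L^{2}}+\|\vecc{U}_{0}\|_{L^{1}})(1+t)^{-n/4-1/2-j/2}+(1+t)^{-\ell/2}\|\nabla^{j+\ell+1}\vecc{U}_{0}\|_{L^{2}}$. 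The extra factor $(1+t)^{-1/2}$ makes the low-frequency contribution from $w$ negligible compared with that from $\vecc{U}$, so adding the two estimates produces a leading term at the rate $(1+t)^{-n/4-j/2}$. The two regularity-loss tails merge into $\|\nabla^{j+\ell}\vecc{U}_{0}\|_{L^{2}}+\|\nabla^{j+\ell+1}\vecc{U}_{0}\|_{L^{2}}\simeq \|\nabla^{j+\ell}\vecc{U}_{0}\|_{H^{1}}$, which is exactly the form stated in \eqref{v_L_2_Estimate}.

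For \eqref{v_L_infty_Estimate}, I would interpolate via a Gagliardo--Nirenberg inequality,
\[\|v\|_{L^{\infty}}\lesssim \|v\|_{L^{2}}^{1-\theta}\,\|\nabla^{k} v\|_{L^{2}}^{\theta},\qquad k=[n/2]+1,\quad k\theta=\tfrac{n}{2},\]
and apply the $L^{2}$ bound \eqref{v_L_2_Estimate} once with $j=0$ and once with $j=k$. Taking $\ell=n+1$, which is the smallest integer for which $\ell/2\geq n/4+k/2$, forces the regularity-loss tails in both factors to decay at least as fast as the corresponding Proposition~\ref{Proposition_Linear} rates, so the product of the two $L^{2}$ bounds collapses to
\[C_{0}(1+t)^{-(n/4)(1-\theta)-(n/4+k/2)\theta}=C_{0}(1+t)^{-n/4-k\theta/2}=C_{0}(1+t)^{-n/2},\]
with $C_{0}=\|\psi_{2}\|_{H^{[n/2]+1}}+\|\vecc{U}_{0}\|_{L^{1}}+\|\vecc{U}_{0}\|_{H^{n+[n/2]+3}}$. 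The smoothness threshold $\vecc{U}_{0}\in H^{n+[n/2]+3}$ materialises precisely from $k+\ell+1=n+[n/2]+3$, which is the highest derivative of $\vecc{U}_{0}$ that appears, coming from the $\|\nabla^{k+\ell+1}\vecc{U}_{0}\|_{L^{2}}$ tail inherited from the $w$-contribution.

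The main technical obstacle is not the core argument, which is short, but the book-keeping of the exponents $j$, $k$, and $\ell$: the same choice of $\ell$ must simultaneously dominate the regularity-loss tails in both interpolation factors without exceeding the smoothness postulated on $\vecc{U}_{0}$, and this constraint is what fixes the threshold $H^{n+[n/2]+3}$ in the statement. Beyond combining Propositions~\ref{Proposition_Linear} and~\ref{Decay_w_New} with a Gagliardo--Nirenberg inequality, no new conceptual ingredient is required.
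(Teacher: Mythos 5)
Your proposal is correct and follows essentially the same route as the paper's own proof in Appendix~C: the first estimate is obtained by combining the decay bounds of Propositions~\ref{Proposition_Linear} and~\ref{Decay_w_New} (equivalently, through the decomposition $v=(v+\tau w)-\tau w$), and the second is obtained from the first via the Gagliardo--Nirenberg interpolation \eqref{L_infty_Interp} with $q=[n/2]+1$ and $\ell=n+1$, yielding exactly the stated regularity threshold $q+\ell+1=n+[n/2]+3$ on $\vecc{U}_0$.
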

The proof of Proposition~\ref{Lemma_decay_v_infty} follows along the lines of \cite[Lemma 7.2]{nikolic2020mathematical}, but adapted to the loss of regularity setting, so we postpone it as well to Appendix~\ref{AppendixC}. \\

\paragraph{\bf Auxiliary time-weighted quantities.} Motivated by the decay estimates for the linear problem,  which we expect to carry over to the nonlinear problem for sufficiently small data, we introduce the following time-weighted quantity: 
\begin{equation}\label{M_j_Def}
\begin{aligned}  
M_{j}[u](t) = \sup_{0\leq \sigma\leq t}\left( 1+\sigma \right) ^{%
	\frac{n+j}{2}}\left\Vert \nabla^j u(\sigma)\right\Vert _{L^{\infty }}.
\end{aligned} 
\end{equation}%
This function will play an important role in the energy analysis of the nonlinear equation. In particular, we will utilize $M_0[\boldsymbol{U}]$, $M_1[\boldsymbol{U}]$, and $M_0[v]$. Furthermore, we define
\begin{equation}\label{Def_Mcal}
\begin{aligned}
\mathdutchcal{M}[v, w, \vecc{U}](t)
 =&\,\begin{multlined}[t] \sum_{j=0}^{s_0}\sup_{0\leq \sigma \leq t}( 1+\sigma
) ^{n/4+j/2}\left\Vert \nabla ^{j} \vecc{U}\left( \sigma \right)
\right\Vert _{L^2}\\
+\sum_{j=0}^{s_0-1}\sup_{0\leq \sigma \leq t}\left[( 1+\sigma
) ^{n/4+j/2}\Vert \nabla^j v(\sigma)\Vert_{L^2}\right.\\ \left.+(1+\sigma)^{\frac{n}{4}+\frac{1}{2}+\frac{j}{2}}\Vert \nabla^jw(\sigma)\Vert_{L^2}\right]. \end{multlined}
\end{aligned}
\end{equation}  
Motivated by the linear decay rates, we set \[s_0=\left[\frac{2s-n}{4}\right].\]  
Let us clarify this choice. For the first term in $\mathcal{M}[v, w, \vecc{U}](t)$ to be uniformly bounded in time, we have from \eqref{decay_loss} that
	\begin{align} 
\Vert\nabla^{j}\vecc{U}(t)\Vert _{L^{2}}\lesssim (1+t)^{-{n}/{4-{j}/{2}}}(\Vert \vecc{U}_{0}\Vert _{L^{1}} +\Vert\nabla^{s}\vecc{U}_{0}\Vert _{L^{2}}).     
\end{align} 
Thus, we should have 
\begin{equation}
\frac{n}{2}+2j \leq j+\ell \leq s,
\end{equation}
where $j$, $\ell$, and $s$ are integers. Therefore, we arrive at the condition for $j$ in $\nabla^j \vecc{U}$ as follows:
\begin{equation}
j \leq s_0 =\left[\frac{s-n/2}{2} \right]=\left[\frac{2s-n}{4}\right].
\end{equation}
\noindent For the second term in $\mathdutchcal{M}[v, w, \vecc{U}](t)$ we have from \eqref{v_L_2_Estimate} the expected decay
\begin{equation}
\Vert\nabla^{j}v(t)\Vert_{L^{2}}\lesssim (\Vert \nabla^j \psi_2\Vert_{L^2}+\Vert \mathbf{U}_{0}\Vert_{L^{1}}+\Vert\nabla^{j+\ell}\vecc{U}_{0}\Vert _{H^{1}})(1+t)^{-\frac{n}{4}-\frac{j}{2}},
\end{equation}
provided that
\begin{equation}
\frac{\ell}{2}\geq \frac{n}{4}+\frac{j}{2}\qquad \text{and}\qquad j+\ell+1\leq s.   
\end{equation}  
This leads to the condition
\begin{equation}
\frac{n}{2}+2j +1 \leq j+\ell +1\leq s,
\end{equation}
and thus
\begin{equation}
j \leq \frac{s-n/2-1}{2}.
\end{equation}
We thus set the upper index $j$ in the term $\nabla^j v$ as \[s_0-1=\left[\frac{2s-n}{4}\right]-1 \leq  \frac{s-n/2-1}{2}.\] Justification of the bound on $j$ in the term $\nabla^j w$ within $\mathdutchcal{M}[v, w, \vecc{U}](t)$ can be done analogously. \\
\indent We can relate the three quantities $M_0$, $M_1$, and $\mathdutchcal{M}$ by the following inequality. 
\begin{lemma} \label{M_estimate}
Let $s_0=[\tfrac{2s-n}{4}]\geq n/2+1$ in \eqref{Def_Mcal}. Then the following inequality holds:
	\begin{equation}\label{M_0_M_estimate}
	M_0[\boldsymbol{\vecc{U}}](t)+M_1[\boldsymbol{\vecc{U}}](t)+M_0[v](t)\lesssim \mathdutchcal{M}[v, w, \vecc{U}](t).
	\end{equation} 
\end{lemma}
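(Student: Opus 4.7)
The proof is a straightforward application of the Gagliardo--Nirenberg interpolation inequality in $\R^n$, together with the observation that $\mathdutchcal{M}$ controls the $L^2$ norms of low-order and high-order derivatives of $\vecc{U}$ and $v$. My plan is to interpolate each weighted $L^\infty$ quantity on the left between these two endpoints; the two time-decay rates combine to produce exactly the weight $(1+\sigma)^{(n+j)/2}$ that appears in the definition of $M_j$.

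For $j \in \{0, 1\}$, applying Gagliardo--Nirenberg to $\vecc{U}$ with top-order derivative $k = s_0$ gives
\begin{equation*}
\|\nabla^j \vecc{U}(\sigma)\|_{L^\infty} \lesssim \|\vecc{U}(\sigma)\|_{L^2}^{1-\theta_j}\, \|\nabla^{s_0} \vecc{U}(\sigma)\|_{L^2}^{\theta_j}, \qquad \theta_j = \frac{n/2 + j}{s_0},
\end{equation*}
which is admissible under the hypothesis $s_0 \geq n/2 + 1$ since $\theta_j \in (0,1]$ for both $j = 0, 1$. Inserting the two bounds
\begin{equation*}
\|\vecc{U}(\sigma)\|_{L^2} \leq (1+\sigma)^{-n/4}\, \mathdutchcal{M}, \qquad \|\nabla^{s_0} \vecc{U}(\sigma)\|_{L^2} \leq (1+\sigma)^{-n/4 - s_0/2}\, \mathdutchcal{M},
\end{equation*}
which are immediate from \eqref{Def_Mcal}, the exponent of $(1+\sigma)$ in the resulting bound collapses to
\begin{equation*}
-\tfrac{n}{4}(1-\theta_j) - \bigl(\tfrac{n}{4} + \tfrac{s_0}{2}\bigr)\theta_j = -\tfrac{n}{4} - \tfrac{s_0\theta_j}{2} = -\tfrac{n}{4} - \tfrac{n/2 + j}{2} = -\tfrac{n+j}{2}.
\end{equation*}
Multiplying by $(1+\sigma)^{(n+j)/2}$ and taking the supremum in $\sigma \in [0,t]$ delivers $M_j[\vecc{U}](t) \lesssim \mathdutchcal{M}[v,w,\vecc{U}](t)$ for $j = 0, 1$.

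The bound on $M_0[v]$ proceeds identically, but since $\mathdutchcal{M}$ controls $\|\nabla^j v\|_{L^2}$ only up to order $j = s_0 - 1$, I would take $k = s_0 - 1$ in the Gagliardo--Nirenberg step, with interpolation exponent $\theta = \frac{n}{2(s_0 - 1)}$. The admissibility $\theta \leq 1$ is now equivalent to $s_0 \geq n/2 + 1$, which is precisely the place where the stated lower bound on $s_0$ is needed. The same exponent-matching computation produces $\|v(\sigma)\|_{L^\infty} \lesssim (1+\sigma)^{-n/2}\, \mathdutchcal{M}$, hence $M_0[v](t) \lesssim \mathdutchcal{M}[v,w,\vecc{U}](t)$. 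The whole argument is a bookkeeping of Gagliardo--Nirenberg exponents against the time weights already built into $\mathdutchcal{M}$; I do not expect any substantive obstacle, and the only subtlety is that the top derivative order of $v$ inside $\mathdutchcal{M}$ is one less than for $\vecc{U}$, which is exactly what makes the hypothesis $s_0 \geq n/2 + 1$ sharp for the $v$-estimate.
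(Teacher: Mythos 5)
Your proof is correct and takes essentially the same approach as the paper: both argue via Gagliardo--Nirenberg interpolation of the $L^\infty$ norm between a low-order and a high-order $L^2$ norm whose time-decay rates are encoded in $\mathdutchcal{M}$, and then match the resulting exponent to the weight $(1+\sigma)^{(n+j)/2}$ appearing in $M_j$. The only cosmetic difference is that you fix the top interpolation endpoint at $\nabla^{s_0}\vecc{U}$ (resp.\ $\nabla^{s_0-1}v$), whereas the paper interpolates from order $j$ up to a flexible intermediate order $q+j$ with $n/2 \leq q \leq s_0 - j$; the exponent arithmetic is identical and yields the same decay rate.
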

\begin{proof}
We rely on the Gagliardo--Nirenberg interpolation inequality 
\begin{equation}\label{L_infty_Interp}
\left\Vert v\right\Vert _{L^{\infty }}\lesssim \left\Vert \nabla^{q}%
v\right\Vert _{L^{2}}^{\frac{n}{2q}}\left\Vert v
\right\Vert _{L^{2}}^{1-\frac{n}{2q}} \quad \text{for} \ q\geq \frac{n}{2},
\end{equation}
to arrive at
\begin{equation}
\begin{aligned}
\Vert v(\sigma)\Vert _{L^{\infty }}\lesssim&\, (1+\sigma)^{(-n/4-q/2)\frac{n}{2q}} \left(\mathdutchcal{M}[v, w, \vecc{U}](t)\right)^{\frac{n}{2q}}
\times (1+\sigma)^{-n/4(1-\frac{n}{2q})} \left(\mathdutchcal{M}[v, w, \vecc{U}](t)\right)^{1-\frac{n}{2q}}\\
=&\, C(1+\sigma)^{-n/2}\mathdutchcal{M}[v, w, \vecc{U}](t),
\end{aligned}
\end{equation}
where $C>0$ does not depend on time and $0 \leq \sigma \leq t$. Above, we have also used
\[(1+\sigma)^{(n/4+q/2)}\Vert \nabla^{q}v \Vert _{L^{2}} \lesssim \mathdutchcal{M}[v, w, \vecc{U}](t).\]
Therefore, we have
\[M_{0}[v](t)\lesssim \mathdutchcal{M}[v, w, \vecc{U}](t),\] 
provided that $s_0-1\geq q\geq n/2$. We can retrace the steps above with $\vecc{U}$ in place of $v$ to show that
\[M_{0}[\vecc{U}](t)\lesssim \mathdutchcal{M}[v, w, \vecc{U}](t).\]
Using again the interpolation inequality \eqref{L_infty_Interp}, we have
\begin{equation}
\begin{aligned}
\left\Vert \nabla \vecc{U}\right\Vert _{L^{\infty }}\lesssim \left\Vert \nabla ^{q}%
\nabla \vecc{U} \right\Vert _{L^{2}}^{\frac{n}{2q}}\left\Vert \nabla \vecc{U}
\right\Vert _{L^{2}}^{1-\frac{n}{2q}}.
\end{aligned}
\end{equation}
We thus obtain 
\begin{equation}
\begin{aligned}
\left\Vert \nabla \vecc{U}(\sigma)\right\Vert _{L^{\infty }}\lesssim& \, \begin{multlined}[t] (1+\sigma)^{(-n/4-(q+1)/2)\frac{n}{2q}} \left(\mathdutchcal{M}[v, w, \vecc{U}](t)\right)^{\frac{n}{2q}}\\
\times (1+\sigma)^{-(n/4+1/2)(1-\frac{n}{2q})} \left(\mathdutchcal{M}[v, w, \vecc{U}](t)\right)^{1-\frac{n}{2q}} \end{multlined}\\
=&\, C(1+\sigma)^{-n/2-1/2}\mathdutchcal{M}[v, w, \vecc{U}](t).
\end{aligned}
\end{equation}
Therefore, on account of the assumptions on $s_0$, we have 
\begin{equation}
M_1[\boldsymbol{\vecc{U}}](t)\lesssim \mathdutchcal{M}[v, w, \vecc{U}](t). 
\end{equation}
This step completes the proof. 
\end{proof}
\section{Proof of the main result} \label{Sec:ProofMain}
\label{Section_Proof_Main_Result}  
We present here the proof of Theorem~\ref{Main_Theorem} up to the following two bounds:
\begin{equation}\label{Main_Estimate_E_D_Main}
\begin{aligned}  
\|\vecc{\Psi}\|_{\mathbbm{E},t}^2+\|\vecc{\Psi}\|_{\mathbbm{D},t}^2 \lesssim& \,\begin{multlined}[t] ||| \vecc{\Psi}(0)|||_{H^s}^2+\Big\{\mathdutchcal{M}[v, w, \vecc{U}](t)\Big.\vspace{0.2cm}\\
\Big.+M_0[\vecc{U}](t)+M_1[\vecc{U}](t)+M_0[v](t)\Big\}\|\vecc{\Psi}\|_{\mathbbm{D},t}^2, \end{multlined}
\end{aligned}
\end{equation}
and 
\begin{equation} \label{M_weighted_estimate_Main}
\begin{aligned}
\mathdutchcal{M}[v, w, \vecc{U}](t)\lesssim&\, \begin{multlined}[t] ||| \vecc{\Psi}(0)|||_{H^s}^2+\Vert \vecc{U}_0\Vert^2_{L^1}\vspace{0.2cm}\\
+\mathdutchcal{M}[v, w, \vecc{U}]^2(t) +M_{0}[\vecc{U}]( t)
\|\vecc{\Psi}\|_{\mathbbm{E},t}.  \end{multlined}
\end{aligned} 
\end{equation}
Their proof is more involved and will thus be carried out separately in the upcoming section. 
\renewcommand*{\proofname}{Proof of Theorem~\ref{Main_Theorem}}
\begin{proof}
By Lemma~\ref{M_estimate}, we have
\begin{equation}
M_0[\vecc{U}](t)+M_1[\vecc{U}](t)+M_0[v](t)\lesssim \mathdutchcal{M}[v, w, \vecc{U}](t).  
\end{equation}
Therefore, if we set
\begin{eqnarray*}
	\mathbb{Y}(t)=\|\vecc{\Psi}\|_{\mathbbm{E},t}+\|\vecc{\Psi}\|_{\mathbbm{D},t}+\mathdutchcal{M}[v, w, \vecc{U}](t),
\end{eqnarray*}
 the two estimates \eqref{Main_Estimate_E_D_Main} and \eqref{M_weighted_estimate_Main} yield
\begin{equation}    \label{Main_Bootstrap}     
\mathbb{Y}(t)^2\lesssim ||| \vecc{\Psi}(0)|||_{H^s}^2+\Vert \vecc{U}_0\Vert^2_{L^1}+\mathbb{Y}(t)^3. 
\end{equation}  
Provided that $\delta_0=||| \vecc{\Psi}(0)|||_{H^s}+\Vert \vecc{U}_0\Vert_{L^1}$  is sufficiently small, this inequality further implies that \[\mathbb{Y}(t)\lesssim ||| \vecc{\Psi}(0)|||_{H^s}+\Vert \vecc{U}_0\Vert_{L^1};\] see Lemma~\ref{Lemma_Stauss} for the technical inequality employed to arrive at this conclusion. Therefore, we have
\begin{subequations}\label{Uniform_Bound_E_M}
\begin{equation}\label{Uniform_Bound}
\|\vecc{\Psi}\|_{\mathbbm{E},t}+\|\vecc{\Psi}\|_{\mathbbm{D},t}\lesssim ||| \vecc{\Psi}(0)|||_{H^s}+\Vert \vecc{U}_0\Vert_{L^1} 
\end{equation}
as well as 
\begin{equation}\label{M_Uniform_Estimate}
\mathdutchcal{M}[v, w, \vecc{U}](t)\lesssim ||| \vecc{\Psi}(0)|||_{H^s}+\Vert \vecc{U}_0\Vert_{L^1},
\end{equation}
\end{subequations}
where we recall that the hidden constants within $\lesssim$ do not depend on time. The time-uniform estimate in \eqref{Uniform_Bound} allows us to extend the solution globally in time. Moreover, it proves the energy bound \eqref{Global_Existence_Estimate}, while the claimed decay rates \eqref{Decay_Estimates}, \eqref{Estimate_Decay_v}, and \eqref{Estimate_Decay_w} follow from estimate \eqref{M_Uniform_Estimate}. 
\end{proof}
\begin{remark}
Theorem~\ref{Main_Theorem} treats sound propagation in the presence of quadratic gradient nonlinearity $\partial_t (|\nabla \psi|^2)$. If local nonlinear effects in sound propagation can be neglected, as is the case, for example, when the propagation distance in terms of wavelengths is large enough, this term can be approximated as follows:
\[|\nabla \psi|^2 \approx \frac{1}{c^2} \psi_t^2;\]
see the discussions given in~\cite[\S 2.3]{coulouvrat1992equations} and ~\cite[\S 2.3]{jordan2016survey}. In such cases, the right-hand side nonlinearity in the JMGT equations involves only $\tilde{k}\psi_t \psi_{tt}$ for some $\tilde{k} \in \R$. We expect that the regularity requirements of Theorem~\ref{Main_Theorem} can then be relaxed and the proof further simplified; see the work \cite{nikolic2020mathematical} by the authors in this direction in the subcritical case  $b>\tau c^2$. \\
\end{remark}
The remaining of the paper is devoted to proving estimates \eqref{Main_Estimate_E_D_Main} and \eqref{M_weighted_estimate_Main}. To this end, we employ a delicate energy analysis based on time-weighted norms. 
\renewcommand*{\proofname}{Proof}
\section{Energy analysis of the JMGT equation\\ in the critical case} ~\label{Sec:EnergyAnalysis}
~\\
\noindent Before proceeding to the energy analysis, we need the following preparatory result.
\begin{lemma} \label{Norm_Estimate} Let $s \geq 3$  be a given integer. The inequality
	\begin{eqnarray}  
	|||\nabla \vecc{\Psi}|||_{H^{s-2}} \lesssim |\vecc{\Psi}|_{\mathbf{H}^{s}} 
	\end{eqnarray}
	holds for all $\vecc{\Psi}$, such that $|\vecc{\Psi}|_{\mathbf{H}^{s}}< \infty$.     
\end{lemma}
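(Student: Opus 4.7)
The strategy is a direct term-by-term comparison: unfold $|||\nabla\vecc{\Psi}|||_{H^{s-2}}^2$ by substituting $\nabla\vecc{\Psi}$ for $\vecc{\Psi}$ and $s-2$ for $s$ in the definition~\eqref{1}, and show that each of the nine resulting summands is controlled by one of the seven summands of $|\vecc{\Psi}|_{\mathbf{H}^{s}}^2$ in~\eqref{2}. The substitution produces contributions of the form $\|\nabla^{a} X\|_{H^{s-2}}^2$ or $\|\nabla^{a} X\|_{H^{s-3}}^2$ with $a\in\{1,2,3\}$ and $X\in\{\psi+\tau v,\,v+\tau w,\,v,\,w,\,\eta\}$; the hypothesis $s\geq 3$ guarantees $s-3\geq 0$ so that all Sobolev norms are well defined, including the $g'$-weighted history pieces.

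For seven of these nine terms the comparison is purely combinatorial, using only the monotonicity $\|f\|_{H^{p}}\leq\|f\|_{H^{q}}$ for $p\leq q$ together with the identity $\|\nabla^{k}f\|_{H^{p}}^2=\sum_{j=k}^{k+p}\|\nabla^{j}f\|_{L^{2}}^2$. For instance, one immediately obtains
\[
\|\nabla\Delta(\psi+\tau v)\|_{H^{s-2}}^2\;\leq\;\|\Delta(\psi+\tau v)\|_{H^{s-1}}^2,\qquad
\|\nabla^{2}(v+\tau w)\|_{H^{s-2}}^2\;\leq\;\|\nabla(v+\tau w)\|_{H^{s-1}}^2,
\]
with analogous bounds handling the $\Delta\nabla v$, $\nabla(v+\tau w)$ in $H^{s-3}$, $\nabla w$, and the two $g'$-weighted history terms $\nabla^{2}\eta$ and $\Delta\nabla\eta$. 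None of these requires anything beyond Sobolev monotonicity on $\R^{n}$.

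The only nontrivial step concerns the two ``bare'' second-order contributions $\|\nabla^{2}(\psi+\tau v)\|_{H^{s-3}}^{2}$ and $\|\nabla^{2}v\|_{H^{s-3}}^{2}$, for which no $\|\nabla X\|_{H^{s-2}}$-type summand is available on the right-hand side. Here the plan is to invoke the full-space Calder\'on--Zygmund identity
\[
\|\nabla^{k+2}f\|_{L^{2}(\R^{n})}\;\lesssim\;\|\nabla^{k}\Delta f\|_{L^{2}(\R^{n})},\qquad k=0,1,2,\dots,
\]
which is immediate from the Fourier-side equality $\||\xi|^{k+2}\hat f\|_{L^{2}}=\||\xi|^{k}|\xi|^{2}\hat f\|_{L^{2}}$. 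Summing over $0\leq k\leq s-3$ yields $\|\nabla^{2}f\|_{H^{s-3}}\lesssim\|\Delta f\|_{H^{s-3}}\leq\|\Delta f\|_{H^{s-1}}$, which absorbs these last two pieces into the $\|\Delta(\psi+\tau v)\|_{H^{s-1}}^{2}$ and $\|\Delta v\|_{H^{s-1}}^{2}$ summands of~\eqref{2}. Adding the nine resulting inequalities then delivers the claim. I do not expect a real obstacle beyond this one point: the only subtlety is that the exchange of $\nabla^{2}$ for $\Delta$ uses the whole-space nature of the problem (an analogous bound in a bounded domain would require either boundary conditions or the addition of a lower-order $L^{2}$ term).
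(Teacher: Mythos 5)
Your proof is correct and follows essentially the paper's own route: you unfold $|||\nabla\vecc{\Psi}|||_{H^{s-2}}^2$ term by term, match each piece against a summand of $|\vecc{\Psi}|_{\mathbf{H}^{s}}^2$ by Sobolev-scale monotonicity, and use the whole-space Fourier identity $\|\nabla^{k+2}f\|_{L^2(\R^n)}=\|\nabla^{k}\Delta f\|_{L^2(\R^n)}$ to convert bare second gradients into Laplacians where no gradient-type summand is available — which is exactly what the paper does, though the paper attributes that step rather loosely to ``the embedding $H^{s-1}\hookrightarrow H^{s-3}$'' while you correctly flag it as a Plancherel/Calder\'on--Zygmund fact. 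One minor over-statement: $\|\nabla^{2}v\|_{H^{s-3}}$ does not in fact need this device, since $|\vecc{\Psi}|_{\mathbf{H}^s}^2$ already contains $\|\nabla v\|_{H^{s-1}}^2$ and the bound is purely combinatorial there; only $\|\nabla^{2}(\psi+\tau v)\|_{H^{s-3}}$ genuinely requires the Fourier-side identity, because the semi-norm has no $\|\nabla(\psi+\tau v)\|_{H^{s-1}}$-type piece.
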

\begin{proof}
By virtue of the embedding $H^{s-1}(\R^n)\hookrightarrow H^{s-3}(\R^n)$, we have the following bounds:
	\begin{equation}
	\begin{aligned}
	\Vert\nabla^2(\psi +\tau v)\Vert_{H^{s-3}}\lesssim&\,  \Vert \Delta (\psi+\tau v)\Vert _{H^{s-1}},\\
	\Vert \nabla( v+\tau w)\Vert_{H^{s-3}}\lesssim&\, \Vert \nabla( v+\tau w)\Vert_{H^{s-1}},\\
	\Vert \nabla^2 v\Vert^{2}_{H^{s-3}}\lesssim&\, \Vert \Delta v\Vert^{2}_{H^{s-1}},\\  
	\Vert \nabla w\Vert_{H^{s-3}}\lesssim&\,\Vert w\Vert_{H^{s-1}}.
	\end{aligned}
	\end{equation}
	We note that
	\begin{equation}
	\begin{aligned}
	\Vert\nabla\Delta(\psi +\tau v)\Vert_{H^{s-2}}\lesssim \Vert \Delta (\psi+\tau v)\Vert _{H^{s-1}},\qquad \Vert  \nabla\Delta v\Vert_{H^{s-2}}\lesssim \Vert\Delta v\Vert_{H^{s-1}}.
	\end{aligned}
	\end{equation}
	Thus,
	\[\|\nabla^2(\psi+\tau v)\|_{H^{s-3}}+\Vert \nabla \Delta (\psi+\tau v)\Vert _{H^{s-2}} \lesssim \|\Delta (\psi+\tau v)\| _{H^{s-1}}. \]
	Moreover, due to the embedding $H^{s-1}_{-g'}\hookrightarrow H^{s-3}_{-g'}$, we have
	\begin{equation}
	\Vert \nabla^2\eta\Vert^{2}_{H^{s-3}, -g'}+\Vert \nabla\Delta\eta\Vert^{2}_{H^{s-2}, -g'}\lesssim \Vert \Delta \eta\Vert _{H^{s}, -g'}^{2}\end{equation}
	The term $\Vert\nabla^2(v+\tau w)\Vert_{H^{s-2}}$ can be estimated as follows:
	\begin{equation}
	\Vert\nabla^2(v+\tau w)\Vert_{H^{s-2}}\lesssim \Vert\nabla(v+\tau w)\Vert_{H^{s-1}}.
	\end{equation} 
	Employing the above inequalities leads to \eqref{Norm_Estimate}.
\end{proof}

\subsection{Energy bounds and the construction of norms}
Our main goal going forward is to derive the two key estimates \eqref{Main_Estimate_E_D_Main}  and \eqref{M_weighted_estimate_Main}.\\
\indent  For simplicity of notation  involving the nonlinear terms, we introduce 
\begin{equation}
F^{(0)}(v,w,\nabla \psi,\nabla v)=2k (vw+\nabla \psi \cdot \nabla v).
\end{equation}
To facilitate the high-order energy analysis, we apply the operator $\nabla^\kappa $ ($\kappa\geq 1$) to the system \eqref{Main_System} and denote
\begin{equation}
\tilde{f}=\nabla^\kappa f,\quad \text{for}\quad f\in{\{\psi, v,w,\eta \}},   
\end{equation}
which results in the space-differentiated system   
\begin{equation}  \label{Main_System_kappa}
\begin{cases}
\tilde{\psi}_{t}=\tilde{v}, \\ 
\tilde{v}_{t}=\tilde{w}, \\ 
\tau \tilde{w}_{t}=- \tilde{w}+c^2_g\Delta \tilde{\psi}+b\Delta \tilde{v} + \displaystyle%
\int_{0}^{\infty}g(r)\Delta\tilde{\eta}(r)\ds+
F^{(\kappa)}(\psi,v,\nabla \psi,\nabla v) 
,
\\ 
\tilde{\eta}_{t}=\tilde{v}-\tilde{\eta}_{r},%
\end{cases}%
\end{equation}
where $F^{(\kappa)}$ is defined as
\begin{equation}\label{F_k_Form}
F^{(\kappa)}(\psi,v,\nabla \psi,\nabla v)=2k[\nabla^\kappa,v]w+2k v\tilde{w}+2\kappa[\nabla^\kappa,\nabla \psi] \cdot \nabla v+2k\nabla \psi \cdot \nabla\tilde{w}
\end{equation}
for $\kappa \geq 1$. Above, $[\cdot, \cdot]$ denotes the commutator:
\[[A,B]=AB-BA.\] Note that
\begin{equation}\label{Derivative_Comuta}
\nabla^\kappa(AB)=[\nabla^\kappa,A]B+A\nabla^\kappa B, \qquad \kappa \geq 1.
\end{equation}
We also introduce the right-hand side functionals $R_\kappa^{(1)}$ and $R_\kappa^{(2)}$ as 
\begin{equation} \label{Def_R_k}
\begin{aligned}
R_\kappa^{(1)}(\varphi)= (F^{(\kappa)}, \varphi)_{L^2}, \quad R_\kappa^{(2)}(\varphi)= (\nabla F^{(\kappa)}, \nabla \varphi)_{L^2}, \quad \kappa \geq 0.
\end{aligned}
\end{equation}
 Here  
 $\varphi$ stands for different test functions that will be used in the proofs. \\
\paragraph{\bf Energy functionals} We define the energy of order $\kappa \geq 0$ as
\begin{equation} \label{energy_mathbfE}
\begin{aligned}
\mathbf{E}^{(\kappa)}(t)=& \,\Vert\nabla^{\kappa+1}(\psi +\tau v)\Vert^{2}_{L^2} + \Vert \nabla^\kappa( v+\tau w)\Vert^{2}_{L^2}+\Vert \nabla^{\kappa+1} v\Vert^{2}_{L^2}
\\
&+\Vert \nabla^{\kappa+1}\eta\Vert^{2}_{L^2, -g'}
+\Vert \Delta\nabla^{\kappa}\eta\Vert^{2}_{H^1, -g'}+\Vert\Delta\nabla^{\kappa}(\psi +\tau v)\Vert^{2}_{H^1} \\
&+ \Vert\nabla^{\kappa+1}(v+\tau w)\Vert^{2}_{H^1}
+\Vert \Delta \nabla^{\kappa} v(t)\Vert^{2}_{H^1}
+\Vert \nabla^\kappa w(t)\Vert_{L^2}^2.\vspace{0.2cm}
\end{aligned}
\end{equation} 
The corresponding dissipative energy is given by
\begin{equation} \label{energy_mathbfD}
\begin{aligned}
\mathbf{D}^{(\kappa)}(t)=&\, \begin{multlined}[t]\Vert \nabla^{\kappa+1} \eta\Vert _{H^2, -g'}^{2}+\Vert \Delta \nabla^{\kappa}\eta
\Vert _{H^1, -g'}^{2}
+\Vert \Delta\nabla^\kappa (\psi+\tau v)\Vert _{L^{2}}^{2} \vspace{0.2cm}\\
+\Vert\nabla^{\kappa+1}(v+\tau w)\Vert^{2}_{L^2}+\Vert \nabla^{\kappa+1} v\Vert_{L^2}^2+\Vert\Delta \nabla ^{\kappa}v\Vert_{L^2}^{2}+\|\nabla^\kappa w\|^2_{L^2}. \end{multlined}
\end{aligned}
\end{equation}
Note that these energies can be related to $|||\cdot |||_{H^s}$ and $|\cdot|_{\mathbf{H}^s}$ as follows:
\begin{equation} \label{identity_EHs}
\displaystyle \sum_{\kappa=0}^{s-1} \mathbf{E}^{(\kappa)}(t) = ||| \vecc{\Psi}(t)|||_{H^s}^2, \qquad \sum_{\kappa=0}^{s-1} \mathbf{D}^{(\kappa)}(t)= |\vecc{\Psi}(t)|_{\mathbf{H}^{s}}^2.
\end{equation}
\paragraph{\bf Problem-tailored energies.} To state our results and following~\cite{nikolic2020jordan, dell2016moore}, we also introduce energies that are tailored to our particular problem. For $\kappa \geq 0$ and $t \in [0,T]$, we set
\begin{equation} \label{energy}
\begin{aligned}
E_1^{(\kappa)}(t)
=& \,\begin{multlined}[t] \dfrac{1}{2}\left [\vphantom{\int_{0}^{%
		\infty}} c^2_g%
\|\nabla^{\kappa +1}(\psi+\tau v)\|_{L^2}^{2}+\tau(b -\tau
c^2_g )\|\nabla^{\kappa+1} v\|^{2}_{L^2}\right.   \\
\left. +\|\nabla^{\kappa}(v+\tau w)\|^{2}_{L^2}+\tau \Vert \nabla^{\kappa+1}
\eta \Vert _{L^2, -g'}^{2}+ \|\nabla^{\kappa+1} \eta\|^2_{L^2, g}
\right.\\
\left.+ 2\tau \int_{\mathbb{R}^{n}}
\int_{0}^{\infty}g(r) \nabla^{\kappa+1} \eta (r) \cdot \nabla^{\kappa+1} v
\ds \dx\right]\end{multlined}
\end{aligned}
\end{equation}
as well as
\begin{equation} \label{E_2}
\begin{aligned}
E_{2}^{(\kappa)}(t)
=&\, \begin{multlined}[t]\frac{1}{2}\left[\vphantom{\int_{0}^{%
		\infty}} c^2_g\left\Vert \Delta
( \nabla^{\kappa} (\psi+\tau v))\right\Vert _{L^{2}}^{2}+\tau(b -\tau c^2_g )\left\Vert
\Delta \nabla ^\kappa v\right\Vert _{L^{2}}^{2}\right. \\ \left.+\Vert \nabla^{\kappa+1} (v+\tau w)\Vert
_{L^2}^2+\tau \Vert \Delta \nabla^\kappa \eta \Vert^2_{L^2, -g'}    
+\Vert \Delta \nabla^\kappa  \eta\Vert^2_{L^{2}, g} \right. \\ \left. +2\tau \int_{\mathbb{R}^{n}}\int_{0}^{%
	\infty}g(r)(\Delta \nabla^\kappa v)(\Delta \nabla^\kappa \eta(r))\ds\dx\right]. \end{multlined}
\end{aligned}
\end{equation}
It can be shown that $E_{1}^{(\kappa)}(t)$ and $E_{2}^{(\kappa)}(t)$ are equivalent to $\mathbf{E}^{(\kappa)}(t)$ and $\mathbf{E}^{(\kappa+1)}(t)$, respectively; see~\cite{nikolic2020jordan} for the proof. Furthermore, they are known to satisfy the following decay estimates.
\begin{proposition}
	\label{Prop:E1_k} 
	Assume that $b=\tau c^2$. 
	Then the following estimates hold for $\kappa \geq 0$:
	\begin{equation} \label{dE_1_Dt}
	\frac{\textup{d}}{\dt}E_1^{(\kappa)}(t)+\frac{%
		1}{2}\Vert \nabla^{\kappa+1} \eta\Vert _{L^2, -g'}^{2}\leq \,
	|R^{(1)}_\kappa (\nabla^\kappa (v + \tau  w))| 
	\end{equation}
	and 
	\begin{equation}  \label{dE_1_Dt_k}
	\frac{\textup{d}}{\dt}E_2^{(\kappa)}(t)+\frac{1}{2}\Vert \Delta \nabla^\kappa\eta
	\Vert _{L^2, -g'}^{2}\leq \, |R^{(2)}_\kappa(\nabla ^\kappa (v + \tau  w))|.   
	\end{equation}
	In addition, it holds that 
	\begin{equation} \label{dE_2_Dt}
	\begin{aligned}
	&\frac{1}{2}\frac{\textup{d}}{\dt}\|\nabla^\kappa w(t)\|^2_{L^2}+\frac{1}{2}\|\nabla^\kappa w\|^2_{L^2}\\
	\lesssim&\,
	\Vert
	\Delta \nabla^\kappa (\psi+\tau v)\Vert_{L^2}^2+\Vert \Delta\nabla^\kappa v\Vert _{L^{2}}^2+\Vert\Delta\nabla^\kappa\eta\Vert^{2}_{L^2, g}+|R_\kappa^{(1)}(\nabla^\kappa w)| ,
	\end{aligned}
	\end{equation}
	for all $t\geq 0$, where the functionals $R^{(1)}_\kappa$  and $R^{(2)}_k$ are  defined in \eqref{Def_R_k}. 
\end{proposition}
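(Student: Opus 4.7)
The strategy is the standard testing procedure, but the critical condition $b=\tau c^2$ must be exploited to rewrite the third equation of \eqref{Main_System_kappa} in a form that yields conservative energy terms (rather than uncontrolled ones). First, observe that
\[ c_g^2\Delta\nabla^{\kappa}\psi + b\Delta \nabla^{\kappa}v \;=\; c_g^2\Delta\nabla^{\kappa}(\psi+\tau v)+(b-\tau c_g^2)\Delta\nabla^{\kappa}v, \]
and that $b=\tau c^2$ together with $c_g^2=c^2-\int_0^\infty g\,\ds$ gives $b-\tau c_g^2 =\tau\!\int_0^\infty g\,\ds$. Combining this with $\tilde{v}_t=\tilde{w}$ recasts the third line of \eqref{Main_System_kappa} as the divergence-form identity
\[ (\nabla^{\kappa}(v+\tau w))_t \;=\; c_g^2\Delta\nabla^{\kappa}(\psi+\tau v) + \int_0^\infty g(r)\,\Delta\nabla^{\kappa}(\tau v+\eta(r))\,\ds + F^{(\kappa)}. \]
This is the workhorse identity; it is the same structural rewriting used in \cite{nikolic2020jordan} and it is what makes the critical case tractable.

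For \eqref{dE_1_Dt}, I would test the above identity in $L^2(\mathbb R^n)$ against $\nabla^{\kappa}(v+\tau w)$. The $c_g^2$-term, after integration by parts and using $(\psi+\tau v)_t=v+\tau w$, produces $-\tfrac{c_g^2}{2}\tfrac{\d}{\dt}\|\nabla^{\kappa+1}(\psi+\tau v)\|_{L^2}^2$. For the memory term I would split $\tau v+\eta$ into its two summands: the $\tau v$ piece yields two quadratic expressions in $\nabla^{\kappa+1}v$ and $\nabla^{\kappa+1}w$; one of these recombines into $-\tfrac{\tau^{2}\int g}{2}\tfrac{\d}{\dt}\|\nabla^{\kappa+1}v\|_{L^2}^2$ (which, thanks to $\tau(b-\tau c_g^2)=\tau^2\!\int g$, matches the $v$-energy in $E_1^{(\kappa)}$), while the other leftover term $-\tau(\int g)\|\nabla^{\kappa+1}v\|_{L^2}^2$ will eventually be cancelled by a matching contribution from the $\eta$-piece. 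The $\eta$-piece is handled by substituting $\nabla^{\kappa+1}v=\nabla^{\kappa+1}\eta_t+\nabla^{\kappa+1}\eta_r$, integrating by parts in $r$ (using $\eta|_{r=0}=0$ and decay at infinity), and differentiating in $t$. This produces the time derivative of the $g$-weighted norm and of the mixed $g$-product in $E_1^{(\kappa)}$, the dissipation $-\tfrac12\|\nabla^{\kappa+1}\eta\|_{L^2,-g'}^2$, the promised cancellation of the leftover $\tau(\int g)\|\nabla^{\kappa+1}v\|^2$, and one remaining term of the form $\tau\!\int\!\int g'\nabla^{\kappa+1}\eta\cdot\nabla^{\kappa+1}v\,\ds\dx$. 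This last term I would rewrite once more via $v=\eta_t+\eta_r$ and integrate by parts in $r$; this produces exactly $-\tfrac{\tau}{2}\tfrac{\d}{\dt}\|\nabla^{\kappa+1}\eta\|_{L^2,-g'}^{2}-\tfrac{\tau}{2}\|\nabla^{\kappa+1}\eta\|_{L^2,g''}^{2}$. The first of these supplies the missing $\tau$-weighted term in $E_1^{(\kappa)}$, and the second is non-positive by assumption (G4) and is thus discarded. Collecting yields \eqref{dE_1_Dt}.

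Estimate \eqref{dE_1_Dt_k} follows by applying $\nabla$ to the divergence-form identity and testing against $\nabla^{\kappa+1}(v+\tau w)$; the computation is word-for-word the same, with one extra spatial derivative everywhere, and with $R^{(2)}_\kappa$ defined by \eqref{Def_R_k} arising naturally from the nonlinear residual after the IBP. For \eqref{dE_2_Dt}, I would not rewrite and instead work directly with the third line of \eqref{Main_System_kappa}: test against $\nabla^{\kappa}w$ and use Young's inequality to absorb $\tfrac12\|\nabla^\kappa w\|_{L^2}^2$ on the left, together with the Cauchy--Schwarz bound
\[ \Big\|\int_0^\infty g(r)\,\Delta\nabla^{\kappa}\eta(r)\,\ds\Big\|_{L^2}^{2} \;\leq\; \Big(\int_0^\infty g\,\ds\Big)\,\|\Delta\nabla^{\kappa}\eta\|_{L^2,g}^{2}, \]
for the memory integral; the remaining right-hand side terms are then already of the form claimed in \eqref{dE_2_Dt}.

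The main technical obstacle is bookkeeping rather than analysis: one has to track four different contributions from expanding the memory integral against $\nabla^{\kappa}(v+\tau w)$, and verify that two of them cancel exactly while the others reorganize into $\tfrac{\d}{\dt}E_1^{(\kappa)}$ plus the dissipation. The cancellation is what crucially relies on $b=\tau c^2$, and the sign of $g''$ enters only at the final step to discard the $\|\cdot\|_{L^2,g''}^{2}$ remainder.
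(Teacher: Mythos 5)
Your proposal is correct, and it fills in exactly the details that the paper itself leaves to a citation (the paper's entire proof of Proposition~\ref{Prop:E1_k} is ``the statement follows from Propositions 4.1 and 4.2 in \cite{nikolic2020mathematical}''). The workhorse identity
\[
(\nabla^{\kappa}(v+\tau w))_t = c_g^2\Delta\nabla^{\kappa}(\psi+\tau v) + \int_0^\infty g(r)\,\Delta\nabla^{\kappa}(\tau v+\eta(r))\,\ds + F^{(\kappa)}
\]
is the correct consequence of $b=\tau c^2$ via $b-\tau c_g^2=\tau\int_0^\infty g\,\ds$, and your bookkeeping of the four memory contributions after testing against $\nabla^{\kappa}(v+\tau w)$ is right: the cross term $\tau(\int g)\|\nabla^{\kappa+1}v\|^2$ from $\tau v$ does cancel against the matching piece arising from $\eta_t=v-\eta_r$, the two substitutions of $v=\eta_t+\eta_r$ followed by $r$-integration by parts reconstruct the $\|\cdot\|_{L^2,g}^2$, mixed, and $\tau\|\cdot\|_{L^2,-g'}^2$ components of $E_1^{(\kappa)}$ together with the dissipation $\tfrac12\|\nabla^{\kappa+1}\eta\|_{L^2,-g'}^2$, and the residual $-\tfrac{\tau}{2}\|\nabla^{\kappa+1}\eta\|_{L^2,g''}^2$ is non-positive by (G4) and may be dropped. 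The parallel argument with an extra gradient gives \eqref{dE_1_Dt_k}, and the direct test against $\nabla^\kappa w$ with Young and Cauchy--Schwarz on the memory integral gives \eqref{dE_2_Dt}. This is the same route taken in the cited source, so there is no alternative-method discussion to make; your reconstruction is essentially the proof the paper relies on.
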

\begin{proof}
	Keeping in mind the assumption $b=\tau c^2$, the statement follows from Propositions 4.1 and 4.2 in~\cite{nikolic2020mathematical}. 
\end{proof}
\subsection{Recovering dissipation via auxiliary functionals} \indent Compared to the analysis in the subcritical case performed in~\cite{nikolic2020jordan}, the dissipation of the $v$ component will be lost from the main energy inequality due to the critical  assumption $b =\tau c^2$.  To recover such a damping term, we introduce the functionals  $F_{i}^{(\kappa)}(t)$ below, where $i \{1, \ldots, 4\}$, and utilize the memory term  in a suitable  way.  We recall that in the absence of the memory damping (i.e., when $g=0$),  the linearized problem is unstable in the critical case; see~\cite{PellSaid_2019} for the revealing analysis.  \\ 
\indent Following~\cite{Racke_Said_2019, nikolic2020mathematical}, we first introduce 
\begin{equation} \label{F_Functionals}
\begin{aligned} 
F_{1}^{(\kappa)}(t)&=\,\int_{\mathbb{R}^{n}}\nabla^{\kappa+1} ( \psi+\tau v)\cdot \nabla^{\kappa+1} (v+\tau w)\dx, \\
F_{2}^{(\kappa)}(t)&=\,-\tau \int_{\mathbb{R}^{n}} \nabla^{\kappa+1} v  \cdot  \nabla^{\kappa+1} (v+\tau w) \dx, 
\end{aligned}
\end{equation}
They are known to satisfy the following estimates. 
\begin{proposition}[see~\cite{nikolic2020jordan}]
	\label{Lemma_F_1_k} Assume that $b=\tau c^2$.  For any $\epsilon _{0},\epsilon _{1}>0,$ it holds
	\begin{equation} \label{F_1_Estimate}
	\begin{aligned}
	&\frac{\textup{d}}{\dt}F_{1}^{(\kappa)}(t)+(c^2_g -\epsilon _{0}-(c^2 -c^2_g )\epsilon
	_{1})\Vert \Delta \nabla^{\kappa}(\psi+\tau v)\Vert _{L^{2}}^{2} \\
	\leq&\, \begin{multlined}[t] \Vert \nabla^{\kappa+1} (v+\tau w)\Vert _{L^{2}}^{2}+C(\epsilon
	_{0})\Vert \Delta \nabla^\kappa v\Vert _{L^{2}}^{2}
	+C(\epsilon _{1})\Vert \Delta\nabla^\kappa \eta\Vert _{L^{2},g}^{2}+|R_{\kappa}^{(2)}(\nabla^{\kappa}(\psi + \tau v))|.\end{multlined}
	\end{aligned}
	\end{equation}
	Moreover, for any $\epsilon_{2},\epsilon_{3}>0,$  we have 
	\begin{equation}\label{F_2_Estimate}
	\begin{aligned}
	&\frac{\textup{d}}{\dt}F_{2}^{(\kappa)}(t)+(1-\epsilon_{3})\Vert\nabla^{\kappa+1}(v+\tau w)\Vert^{2}_{L^2} \\
	\leq&\, \begin{multlined}[t]\epsilon_{2}\Vert\Delta\nabla^\kappa(\psi +\tau v)\Vert^{2}_{L^2}  +C(\epsilon_{3},\epsilon_{2})(\Vert \Delta\nabla^\kappa v\Vert^{2}_{L^2}+\Vert \nabla^{\kappa+1} v\Vert^{2}_{L^2}) \\
	+\frac{1}{2}\Vert \nabla^{\kappa+1}\eta\Vert^{2}_{L^2, g}+|R_\kappa^{(2)}(\tau \nabla^\kappa v)|,\end{multlined}
	\end{aligned}
	\end{equation}
	where the functional $R_\kappa^{(2)}$ is defined in \eqref{Def_R_k}.
\end{proposition}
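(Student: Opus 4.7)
The plan is to differentiate each $F_i^{(\kappa)}(t)$ directly in time, substitute the evolution equations from \eqref{Main_System_kappa}, and integrate by parts in space to redistribute derivatives. The key identity I will rely on throughout is obtained by adding $\tilde{w}$ to the third line of \eqref{Main_System_kappa} and invoking the critical relation $b = \tau c^2$:
\begin{equation*}
\partial_t \nabla^{\kappa}(v+\tau w) = c^2_g \Delta \nabla^{\kappa}(\psi+\tau v) + \tau(c^2 - c^2_g)\Delta \nabla^{\kappa} v + \int_0^\infty g(r)\, \Delta \nabla^\kappa \eta(r)\,\ds + F^{(\kappa)}.
\end{equation*}
This splitting makes visible that in the critical case there is no direct $\|\Delta v\|^2$ dissipation available from Proposition~\ref{Prop:E1_k}; the functionals $F_1^{(\kappa)}$ and $F_2^{(\kappa)}$ are engineered precisely to recover the missing damping in $\Delta(\psi+\tau v)$ and $\nabla(v+\tau w)$.

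For $F_1^{(\kappa)}$, the time derivative hits both factors. The derivative of $\nabla^{\kappa+1}(\psi+\tau v)$ produces $\nabla^{\kappa+1}(v+\tau w)$, giving the clean term $\|\nabla^{\kappa+1}(v+\tau w)\|_{L^2}^2$. On the second factor I substitute the identity above, integrate by parts so that one $\nabla$ moves off $\nabla^{\kappa+1}(\psi+\tau v)$, and obtain the coercive $-c^2_g\|\Delta\nabla^{\kappa}(\psi+\tau v)\|_{L^2}^2$ plus three cross terms. The coupling against $\tau(c^2-c^2_g)\Delta v$ is split by Young's inequality (parameter $\epsilon_1$) into $(c^2-c^2_g)\epsilon_1\|\Delta\nabla^{\kappa}(\psi+\tau v)\|^2$ plus $C(\epsilon_1)\|\Delta\nabla^{\kappa}v\|^2$; the memory convolution is dispatched by Cauchy--Schwarz in $r$ with weight $g$, using $\int_0^\infty g\,\ds = c^2 - c^2_g < \infty$ to bound $\|\int g\,\Delta\nabla^{\kappa}\eta\,\ds\|_{L^2}^2$ by a multiple of the weighted norm, combined with Young's parameter $\epsilon_0$; and the nonlinear residual equals exactly $R_\kappa^{(2)}(\nabla^\kappa(\psi+\tau v))$ by definition \eqref{Def_R_k}.

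For $F_2^{(\kappa)}$, the derivative of the first factor uses $v_t = w$ together with the algebraic identity $\tau\nabla^{\kappa+1}w = \nabla^{\kappa+1}(v+\tau w) - \nabla^{\kappa+1}v$; this is precisely what conjures the sought-after $-\|\nabla^{\kappa+1}(v+\tau w)\|_{L^2}^2$ along with a cross product between $\nabla^{\kappa+1}v$ and $\nabla^{\kappa+1}(v+\tau w)$ absorbed by Young's parameter $\epsilon_3$ at the cost of $C(\epsilon_3)\|\nabla^{\kappa+1}v\|^2$. For the second factor, an integration by parts shifts $\nabla$ from $\nabla^{\kappa+1}v$, pairing $\Delta\nabla^{\kappa}v$ against each summand of $\partial_t\nabla^{\kappa}(v+\tau w)$; the principal coupling against $\Delta\nabla^{\kappa}(\psi+\tau v)$ is split by Young's with $\epsilon_2$, the self-coupling gives $\tau^2(c^2-c^2_g)\|\Delta\nabla^{\kappa}v\|^2$, the memory cross term is Young-plus-weighted-Cauchy--Schwarz, and the nonlinear contribution assembles into $|R_\kappa^{(2)}(\tau\nabla^{\kappa}v)|$.

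The principal bookkeeping obstacle is to balance the Young's parameters $\epsilon_0,\epsilon_1,\epsilon_2,\epsilon_3$ so that the coefficients of the recovered dissipative terms on the left-hand sides of \eqref{F_1_Estimate} and \eqref{F_2_Estimate} remain strictly positive, while the right-hand side quantities $\|\Delta\nabla^{\kappa}v\|^2,\|\nabla^{\kappa+1}v\|^2$, and the weighted memory norms remain controlled by the dissipative energies $\mathbf{D}^{(\kappa)}$ and $\mathbf{D}^{(\kappa+1)}$. This feasibility is exactly what will later allow $F_1^{(\kappa)}$ and $F_2^{(\kappa)}$ to be combined with Proposition~\ref{Prop:E1_k} into a Lyapunov-type bound closing the critical case $b=\tau c^2$; without it, the loss of regularity inherent in the linearization cannot be compensated.
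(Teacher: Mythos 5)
Your plan is the standard energy-functional computation and it matches the methodology that the paper attributes to~\cite{nikolic2020jordan}: differentiate $F_i^{(\kappa)}$ in time, substitute the identity $\partial_t\nabla^\kappa(v+\tau w)=c_g^2\Delta\nabla^\kappa(\psi+\tau v)+\tau(c^2-c_g^2)\Delta\nabla^\kappa v+\int_0^\infty g\,\Delta\nabla^\kappa\eta\,\ds+F^{(\kappa)}$ (valid precisely because $b=\tau c^2$), integrate by parts to expose the Laplacian dissipation, and close each cross term with Young's inequality. The identification of the nonlinear residuals with $R_\kappa^{(2)}(\nabla^\kappa(\psi+\tau v))$ and $R_\kappa^{(2)}(\tau\nabla^\kappa v)$ is also correct, as is the observation that $\tau\nabla^{\kappa+1}w=\nabla^{\kappa+1}(v+\tau w)-\nabla^{\kappa+1}v$ is what makes the coercive $-\|\nabla^{\kappa+1}(v+\tau w)\|_{L^2}^2$ appear in $\frac{\textup{d}}{\dt}F_2^{(\kappa)}$. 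Two small remarks on the accounting:

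\begin{itemize}[leftmargin=0.45cm]
\item You attach $\epsilon_1$ to the $\Delta\nabla^\kappa v$ coupling and $\epsilon_0$ to the memory coupling, whereas the statement attaches them the other way around (the $(c^2-c_g^2)$ prefactor that you put with the $\Delta v$ coupling actually arises from $\int_0^\infty g\,\ds=c^2-c_g^2$ in the memory split). This is purely a relabeling of free parameters and does not affect the inequality.
\item More substantively: after the integration by parts that you describe for $F_2^{(\kappa)}$, the memory cross term reads $\tau\int_{\R^n}\Delta\nabla^\kappa v\;\big(\int_0^\infty g\,\Delta\nabla^\kappa\eta\,\ds\big)\,\dx$, and Young's plus the weighted Cauchy--Schwarz in $r$ gives a bound involving $\|\Delta\nabla^\kappa\eta\|_{L^2,g}^2$, \emph{not} the lower-order $\frac12\|\nabla^{\kappa+1}\eta\|_{L^2,g}^2$ that appears in \eqref{F_2_Estimate}. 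Your route therefore reproduces a slightly stronger (higher-derivative) memory term than the one stated; the discrepancy is immaterial for the downstream Lyapunov bound in Proposition~\ref{Lemma_Equivl_2}, because the dissipation $\mathbf{D}^{(\kappa)}$ defined in \eqref{energy_mathbfD} already contains $\|\Delta\nabla^\kappa\eta\|_{H^1,-g'}^2$ and $\|\nabla^{\kappa+1}\eta\|_{H^2,-g'}^2$, and (G3) converts $\|\cdot\|_{L^2,g}$-norms into $\|\cdot\|_{L^2,-g'}$-norms. Still, if you want to reproduce \eqref{F_2_Estimate} literally rather than an equally usable variant, you would need to explain how the extra Laplacian on $\eta$ is avoided; as written, your proof establishes a cousin of \eqref{F_2_Estimate} that serves the same purpose.
\end{itemize}

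Beyond these two bookkeeping points, the proposal is sound and essentially the same argument as the paper's.
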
  
We observe that estimate \eqref{dE_1_Dt} does not have dissipative terms containing  $\Vert \nabla^{\kappa+1} v \Vert_{L^2}$ or $\Vert \Delta \nabla^\kappa v\Vert_{L^2}$. To recover the a dissipation term for $\Vert \nabla^{\kappa+1} v\Vert_{L^2}$, we introduce the functional 
\begin{equation}\label{F3_k}
F_{3}^{(\kappa)}(t)=- \tau\int_{0}^{\infty}\int_{\R^n} g(r)\nabla^{\kappa+1}\eta (r)\cdot \nabla^{\kappa+1} v \ds \dx;
\end{equation}
see~\cite{Bounadja_Said_2019} for a similar approach. We have the following estimate for this functional. 
\begin{proposition}\label{Lemma_F_3}
     	For any $\epsilon_{4}$, $\epsilon_{5}$, $\epsilon_{6}>0$, it holds
     	\begin{equation} \label{dF_dt_3_k}
\begin{aligned}  
&\dfrac{\textup{d}}{\dt}F_{3}^{(\kappa)}(t)+(\tau(c^2-c_g^2)-\epsilon_{4}g_{0}-\epsilon_{6}(c^2-c_g^2))\Vert \nabla^{\kappa+1} v\Vert_{L^2}^2\\
\leq &\,\begin{multlined}[t] \epsilon_{5}(c^2-c_g^2)\Vert \nabla^{\kappa+1}(v+\tau w)\Vert_{L^2}^2 + C(\epsilon_{4})\Vert \nabla^{\kappa+1}\eta\Vert^{2}_{L^2, -g^\prime}  
 \\+C(\epsilon_{5},\epsilon_{6})\Vert \nabla^{\kappa+1}\eta\Vert^{2}_{L^2, g},\end{multlined} 
\end{aligned}
\end{equation}
	where $g_0=g(0)$. 
     \end{proposition}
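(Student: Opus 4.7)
The plan is to differentiate $F_3^{(\kappa)}(t)$ directly in time, insert the evolution equations for $\eta$ and $v$ from the system \eqref{Main_System_kappa}, and extract the desired dissipative term for $\|\nabla^{\kappa+1} v\|_{L^2}^2$ by exploiting the identity $\int_0^\infty g(r)\,\ds = c^2 - c_g^2$. Using $\eta_t = v - \eta_r$ and $v_t = w$, the time derivative splits as
\begin{equation*}
\frac{\textup{d}}{\dt}F_3^{(\kappa)}(t) = -\tau \int_0^\infty\!\!\int_{\R^n} g(r)\nabla^{\kappa+1}v \cdot \nabla^{\kappa+1}v\,\dx\,\ds + \tau\int_0^\infty\!\!\int_{\R^n} g(r)\nabla^{\kappa+1}\eta_r \cdot \nabla^{\kappa+1}v\,\dx\,\ds - \tau\int_0^\infty\!\!\int_{\R^n} g(r)\nabla^{\kappa+1}\eta \cdot \nabla^{\kappa+1}w\,\dx\,\ds.
\end{equation*}
The first piece immediately yields $-\tau(c^2-c_g^2)\|\nabla^{\kappa+1} v\|_{L^2}^2$, which when moved to the left-hand side produces the target dissipation.

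For the second piece I would integrate by parts in $r$, using the boundary conditions $\eta(r=0)=0$ and the decay of $g \eta$ at infinity (guaranteed by assumption (G3)), to obtain
\begin{equation*}
\tau\int_0^\infty\!\!\int_{\R^n} g(r)\nabla^{\kappa+1}\eta_r \cdot \nabla^{\kappa+1}v\,\dx\,\ds = -\tau\int_0^\infty\!\!\int_{\R^n} g'(r)\nabla^{\kappa+1}\eta \cdot \nabla^{\kappa+1}v\,\dx\,\ds.
\end{equation*}
Since $-g' \geq 0$, Cauchy--Schwarz and Young's inequality with parameter $\epsilon_4$, combined with $\int_0^\infty(-g'(r))\,\ds = g_0$, give a bound of the form
\[\epsilon_4 g_0 \|\nabla^{\kappa+1} v\|_{L^2}^2 + C(\epsilon_4)\|\nabla^{\kappa+1}\eta\|_{L^2,-g'}^2,\]
up to absorbing $\tau$ into the definition of $\epsilon_4$.

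For the third piece I would use the decomposition $\tau w = (v+\tau w) - v$ to rewrite it as
\begin{equation*}
-\int_0^\infty\!\!\int_{\R^n} g(r)\nabla^{\kappa+1}\eta \cdot \nabla^{\kappa+1}(v+\tau w)\,\dx\,\ds + \int_0^\infty\!\!\int_{\R^n} g(r)\nabla^{\kappa+1}\eta \cdot \nabla^{\kappa+1}v\,\dx\,\ds,
\end{equation*}
and apply Young's inequality to each integral with parameters $\epsilon_5$ and $\epsilon_6$ respectively, using $\int_0^\infty g(r)\,\ds = c^2 - c_g^2$ to produce the prefactors $\epsilon_5(c^2-c_g^2)$ and $\epsilon_6(c^2-c_g^2)$ on the good terms, with remainders controlled by $\|\nabla^{\kappa+1}\eta\|_{L^2,g}^2$.

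The main subtlety is the bookkeeping: the $v$ part of $\eta_t = v - \eta_r$ directly supplies the needed dissipation only if one keeps it distinct from the integration-by-parts piece and from the resplitting of the $w$ term. Assembling all three contributions and collecting the coefficient of $\|\nabla^{\kappa+1}v\|_{L^2}^2$ on the left yields $\tau(c^2-c_g^2) - \epsilon_4 g_0 - \epsilon_6(c^2 - c_g^2)$, matching \eqref{dF_dt_3_k}, provided $\epsilon_4, \epsilon_6$ are small enough that this coefficient remains positive.
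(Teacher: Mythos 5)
Your proof is correct and follows essentially the same route as the paper: substitute $\eta_t = v - \eta_r$, integrate by parts in $r$ (using $\eta(r=0)=0$ and the decay of $g$), split $\tau w = (v+\tau w) - v$, and apply Young's inequality. The only cosmetic difference is that you differentiate $F_3^{(\kappa)}$ directly, while the paper starts from $\Delta v_t = \Delta w$ and multiplies by $\tau\int_0^\infty g(r)\eta(r)\,\ds$ before integrating by parts in space to arrive at the same identity.
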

\begin{proof}
We prove the case $\kappa=0$; the general case $\kappa \geq 0$ follows analogously. From the second equation in \eqref{Main_System}, we have by applying the Laplacian 
\begin{equation}\label{Lapla_Eq}
\Delta v_t=\Delta w. 
\end{equation}
Multiplying the above equation by $\tau\int_{0}^{\infty}g(r)\eta(r)\ds$ and integrating over $\R^n$, using integration by parts, we get
\begin{equation}
\tau\int_{\R^n}\int_{0}^{\infty}g(r)\nabla\eta(r)\cdot\nabla v_{t} \ds\dx =	\tau\int_{\R^n}\int_{0}^{\infty}g(r)\nabla\eta(r)\cdot\nabla w \ds\dx. 
\end{equation}
Consequently,
\begin{equation}
\begin{aligned}
&\tau\dfrac{\textup{d}}{\dt}\int_{\R^n}\int_{0}^{\infty}g(r)\nabla\eta(r)\cdot\nabla v \ds\dx-\tau\int_{\R^n}\int_{0}^{\infty}g(r)\nabla\eta_{t}(r)\cdot \nabla v \ds \dx\\
=& \, 
     	\tau\int_{\R^n}\int_{0}^{\infty}g(r)\nabla\eta(r)\cdot\nabla w \ds\dx.
\end{aligned}     	
\end{equation} 
By using the fact that $\eta_{t}=v-\eta_{r}$, integrating by parts with respect to $r$, and keeping in mind that $\int_0^\infty g(r)\ds=c^2-c_g^2$,
 we obtain
     	\begin{equation}
     	\begin{aligned}
     	&\dfrac{\textup{d}}{\dt}F_{3}^{(0)}(t)+\tau(c^2-c_g^2)\Vert\nabla v\Vert_{L^2}^{2}\\
	=&\, \begin{multlined}[t]-\tau\int_{\R^n}\int_{0}^{\infty}g^{\prime}(r)\nabla\eta(r)\cdot \nabla v \ds\dx-\int_{0}^{\infty}g(r)\nabla\eta(r)\cdot \nabla (v+\tau w) \ds\dx \\
     	+\int_{\R^n}\int_{0}^{\infty}g(r)\nabla\eta(r)\cdot\nabla v \ds\dx.\end{multlined}
     	\end{aligned}
     	\end{equation}
 By applying Young's inequality,  we obtain \eqref{dF_dt_3_k}. This completes the proof.    
\end{proof}
In the critical case $ b=\tau c^2$,  the term $\Vert \Delta\nabla^\kappa v \Vert_{L^2}^2$ is not present in estimate \eqref{dE_1_Dt_k} and, therefore, also not in \eqref{dE_1_Dt}.  To restore this term, we define the functional 
\begin{equation}\label{F_4_Function_k}
F_4^{(\kappa)}(t)=-\tau \int_{\R^n} \int_{0}^{\infty} g(r)\Delta\nabla^\kappa\eta(r) \Delta \nabla^\kappa v \ds \dx,
\end{equation}
which satisfies the following bound. 
\begin{lemma}\label{Lemma_F_4}
For any $\epsilon_{4}$, $\epsilon_{5}$, $\epsilon_{6}>0$, the following bound holds:
\begin{equation}\label{F_4_Estimate_k}
\begin{aligned}
&\dfrac{\textup{d}}{\dt}F_4^{(\kappa)}(t)+\Big[\tau(c^2-c_g^2)-\epsilon_{4} g_{0}-\epsilon_{6} (c^2-c_g^2))\Big]\Vert\Delta\nabla^\kappa v\Vert_{L^2}^{2}\\
\leq&\, \epsilon_{5}(c^2-c_g^2)\Vert \nabla^{\kappa+1}(v+\tau w)\Vert_{L_2}^2 + C(\epsilon_{4})\Vert \Delta \nabla^\kappa\eta\Vert^{2}_{L^2, -g^\prime}\\
&+C(\epsilon_5) \Vert \Delta \nabla^{\kappa+1}\eta\Vert^{2}_{L^2, g} + C(\epsilon_6) \Vert \Delta \nabla^\kappa\eta\Vert^{2}_{L^2, g}. 
\end{aligned}
\end{equation}
\end{lemma}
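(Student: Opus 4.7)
The plan is to follow the template of Proposition \ref{Lemma_F_3} (the analysis of $F_3^{(\kappa)}$), with the one new ingredient being a spatial integration by parts that trades one derivative between the $\eta$ and $(v+\tau w)$ factors so as to land exactly on the norms appearing in the claimed bound. I would begin by differentiating $F_4^{(\kappa)}(t)$ directly under the integral and substituting the evolution equations from \eqref{Main_System_kappa}: use $\eta_t = v - \eta_r$ to expand $\Delta\nabla^\kappa \eta_t$ and $v_t = w$ to expand $\Delta\nabla^\kappa v_t$. The resulting $v$--$v$ piece, combined with the identity $\int_0^\infty g(r)\,\ds = c^2 - c_g^2$ from assumption (G2), produces exactly the desired dissipation $\tau(c^2 - c_g^2)\Vert\Delta\nabla^\kappa v\Vert_{L^2}^2$ that will be moved to the left-hand side.

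Next I would integrate by parts in the memory variable $r$ on the $\eta_r$ piece. The boundary contribution at $r=0$ vanishes thanks to $\eta(r=0)=0$, while the contribution at $r=\infty$ vanishes because $g\in W^{1,1}(\R^+)$ together with the exponential pointwise bound inherited from (G3) forces $g(r)\to 0$. This converts the $\eta_r$ term into one weighted by $g'(r)$, which, by Cauchy--Schwarz in space and then in $r$ (splitting the weight as $-g' = \sqrt{-g'}\cdot\sqrt{-g'}$ and invoking $\int_0^\infty(-g'(r))\,\ds = g_0$), followed by Young's inequality, yields the contribution $\epsilon_4 g_0\,\Vert\Delta\nabla^\kappa v\Vert_{L^2}^2 + C(\epsilon_4)\,\Vert\Delta\nabla^\kappa \eta\Vert_{L^2,-g'}^2$.

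For the remaining term carrying $\Delta\nabla^\kappa w$, I would write $\tau w = (v+\tau w) - v$ and split into two pieces. The pure $v$-piece is absorbed by Cauchy--Schwarz/Young (using $\int_0^\infty g(r)\,\ds = c^2-c_g^2$), producing $\epsilon_6(c^2-c_g^2)\Vert\Delta\nabla^\kappa v\Vert_{L^2}^2 + C(\epsilon_6)\Vert\Delta\nabla^\kappa \eta\Vert_{L^2,g}^2$. For the $(v+\tau w)$-piece, I would invoke the spatial integration-by-parts identity
\[\int_{\R^n} \Delta\nabla^\kappa \eta \cdot \Delta\nabla^\kappa (v+\tau w)\, \dx = -\int_{\R^n} \nabla\Delta\nabla^\kappa \eta \cdot \nabla^{\kappa+1}(v+\tau w)\, \dx,\]
and then apply Cauchy--Schwarz (using $\Vert \nabla\Delta\nabla^\kappa \eta\Vert_{L^2} = \Vert\Delta\nabla^{\kappa+1}\eta\Vert_{L^2}$) together with Young's inequality to extract the term $\epsilon_5(c^2-c_g^2)\Vert\nabla^{\kappa+1}(v+\tau w)\Vert_{L^2}^2 + C(\epsilon_5)\Vert\Delta\nabla^{\kappa+1}\eta\Vert_{L^2,g}^2$. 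Summing the three contributions and moving the dissipative pieces across gives exactly the stated inequality.

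The only real subtlety is the spatial integration by parts in the last step: one must sacrifice one spatial derivative from the $(v+\tau w)$-side onto the $\eta$-side, so that the recovered dissipation-controlling term appears with the norm $\Vert\nabla^{\kappa+1}(v+\tau w)\Vert_{L^2}^2$ already controlled by the companion estimates \eqref{dE_1_Dt}, \eqref{F_1_Estimate}, and \eqref{F_2_Estimate}, at the unavoidable cost of a $\Vert\Delta\nabla^{\kappa+1}\eta\Vert_{L^2,g}^2$ remainder (itself sitting inside the energy norm $|||\vecc{\Psi}|||_{H^s}$ defined in \eqref{1}). This extra integration by parts was not needed in the proof of Proposition~\ref{Lemma_F_3} because the dissipation sought there lived at one spatial derivative lower on $v$, so the derivative counts already matched without rearrangement.
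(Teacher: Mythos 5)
Your proposal is correct and follows exactly the paper's route: differentiate $F_4^{(\kappa)}$ (equivalently, multiply the space-differentiated identity $\Delta\nabla^\kappa v_t = \Delta\nabla^\kappa w$ by $\tau\int_0^\infty g(r)\Delta\nabla^\kappa\eta(r)\ds$ and integrate over $\R^n$), integrate by parts in $r$ on the $\eta_r$ piece, split $\tau\nabla^\kappa w = \nabla^\kappa(v+\tau w) - \nabla^\kappa v$, and apply Cauchy--Schwarz and Young to each resulting term. You have also correctly made explicit the spatial integration by parts on the $(v+\tau w)$-piece---a step the paper compresses into the phrase ``Applying Young's inequality yields the desired bound''---which is indeed what yields $\Vert\nabla^{\kappa+1}(v+\tau w)\Vert_{L^2}^2$ paired with $\Vert\Delta\nabla^{\kappa+1}\eta\Vert_{L^2,g}^2$ rather than $\Vert\Delta\nabla^\kappa(v+\tau w)\Vert_{L^2}^2$.
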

\begin{proof}
We present the proof for $\kappa=0$; the general case $\kappa \geq 0$ follows analogously. Multiplying  \eqref{Lapla_Eq} by $\tau\int_{0}^{\infty}g(r)\Delta\eta(r)\ds,$ and integrating over $\R^n$, using the fact that 
\begin{eqnarray*}
\tau\int_{\R^n}\int_{0}^{\infty}g(r)\Delta\eta(r)\Delta v_{t} \ds\dx =	\tau\int_{\R^n}\int_{0}^{\infty}g(r)\Delta\eta(r)\Delta w \ds\dx, 
\end{eqnarray*}
we get 
\begin{eqnarray*}
&&\tau\dfrac{\textup{d}}{\dt}\int_{\R^n}\int_{0}^{\infty}g(r)\Delta\eta(r)\Delta v \ds\dx-\tau\int_{\R^n}\int_{0}^{\infty}g(r)\Delta\eta_{t}(r)\Delta v \ds \dx\\
&=&  
     	\tau\int_{\R^n}\int_{0}^{\infty}g(r)\Delta\eta(r)\Delta w \ds\dx.
\end{eqnarray*}
Hence, this yields 
\begin{equation}
\begin{aligned}\label{F_4_First_Term}
     	&-\tau\dfrac{\textup{d}}{\dt}\int_{\R^n}\int_{0}^{\infty}g(r)\Delta\eta(r)\Delta v \ds\dx+\tau(c^2-c_g^2)\Vert\Delta v\Vert_{L^2}^{2}\\
	=&\,\begin{multlined}[t]-\tau\int_{\R^n}\int_{0}^{\infty}g^{\prime}(r)\Delta\eta(r) \Delta v \ds\dx-\int_{\R^n}\int_{0}^{\infty}g(r)\Delta\eta(r) \Delta (v+\tau w) \ds\dx \\
     	+\int_{\R^n}\int_{0}^{\infty}g(r)\Delta\eta(r)\Delta v \ds\dx.\end{multlined}
	\end{aligned}
     	\end{equation}
Applying Young's inequality yields the desired bound.  	 
\end{proof}
\subsection{The Lyapunov functional and loss of regularity} 
 We are now ready to define  the Lyapunov functional of order zero as
\begin{equation}
\begin{aligned}\label{Lyapunov_F_crit_0}
\mathscr{F}^{(0)}(t)=&\,N_0 (E_1^{(0)}(t)+E_2^{(0)}+E_2^{(1)}(t)+\varepsilon\tau \Vert w\Vert^2 _{L^{2}})\\
&+F_1^{(0)}(t)+2F_2^{(0)}(t)+N_1(F_3^{(0)}(t)+F_4^{(0)}(t)), 
\end{aligned}    
\end{equation}
for $t \geq 0$, where $N_0,\, N_1$ are large positive constants  and $\varepsilon$ is a small positive constant that will be fixed below. We claim that this Lyapunov functional can be made equivalent to $\mathbf{E}_0^2$, where the energy $\mathbf{E}_0$ is defined in \eqref{energy_mathbfE}. 
\begin{proposition}
\label{Lemma_Equivl_2} Let $b\geq \tau c^2>\tau c^2_g$. There exist positive constants $C_{1}$ and $C_{2}$, 
such that 
\begin{equation}  \label{Eq_L_E}
C_{1}\mathbf{E}_0^2(t)\leq \mathscr{F}^{(0)}(t)\leq C_{2}\mathbf{E}_0^2(t), \quad t \geq 0,
\end{equation}
provided that the constant $N_0$ in the Lyapunov functional \eqref{Lyapunov_F_crit_0} is chosen to be large enough. Furthermore, the following estimate holds:
\begin{equation}
\begin{aligned}
\label{F_Lyap}
&\frac{\textup{d}}{\dt}\mathscr{F}^{(0)}(t)+\Vert \nabla \eta\Vert _{L^2, -g'}^{2}+\Vert \Delta \eta
\Vert _{L^2, -g'}^{2}+\Vert \Delta \nabla\eta
\Vert _{L^2, -g'}^{2}+\|w\|^2_{L^2}\\
&+\Vert \Delta (\psi+\tau v)\Vert _{L^{2}}^{2} +\Vert\nabla(v+\tau w)\Vert^{2}_{L^2}+\Vert \nabla v\Vert_{L^2}^2+\Vert\Delta v\Vert_{L^2}^{2}\\
\lesssim &\,|R_0^{(2)}(\tau v)|
+|R_0^{(2)}(\psi + \tau v)|+|R_0^{(1)}(w)|\\
&+|R_0^{(1)}(v + \tau w)|+|R_0^{(2)}(v + \tau w)| +|R^{(2)}_1(\nabla  (v + \tau  w))|.
\end{aligned} 
\end{equation}
\end{proposition}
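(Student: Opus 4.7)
The plan has two parts: establish the two-sided bound \eqref{Eq_L_E} first, and then derive the differential inequality \eqref{F_Lyap}. For the equivalence, the building blocks $E_1^{(0)}$, $E_2^{(0)}$, $E_2^{(1)}$ are already known to be equivalent (blockwise) to the corresponding pieces of $\mathbf{E}_0^2$ by the prior result referenced above \eqref{Prop:E1_k}, and $\varepsilon\tau\|w\|^2_{L^2}$ matches the remaining block of $\mathbf{E}_0^2$. The cross functionals $F_1^{(0)}, F_2^{(0)}, F_3^{(0)}, F_4^{(0)}$ are all bounded by $C\mathbf{E}_0^2$ via Cauchy--Schwarz and Young's inequality (using $\int_0^\infty g(r)\ds = c^2-c_g^2$ for the memory-type $F_3^{(0)}$ and $F_4^{(0)}$), so choosing $N_0$ large enough absorbs them into the $N_0$-weighted positive part and produces \eqref{Eq_L_E}.

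For the differential inequality, I would differentiate $\mathscr{F}^{(0)}$ and apply the seven building-block estimates: \eqref{dE_1_Dt}, \eqref{dE_1_Dt_k}, and \eqref{dE_2_Dt} at $\kappa=0$; \eqref{dE_1_Dt_k} at $\kappa=1$; and \eqref{F_1_Estimate}, \eqref{F_2_Estimate}, \eqref{dF_dt_3_k}, \eqref{F_4_Estimate_k} at $\kappa=0$. Summing with weights $(N_0,N_0,N_0,N_0\varepsilon\tau,1,2,N_1,N_1)$ yields, on the left-hand side, positive coefficients in front of exactly the eight dissipative quantities appearing in \eqref{F_Lyap}: the three $-g'$-weighted $\eta$-norms from the $N_0$-weighted $E$-group, $\|w\|^2_{L^2}$ from the $N_0\varepsilon\tau$ copy of \eqref{dE_2_Dt}, $\|\Delta(\psi+\tau v)\|^2_{L^2}$ from $F_1^{(0)}$, $\|\nabla(v+\tau w)\|^2_{L^2}$ from $2F_2^{(0)}$, and $\|\nabla v\|^2_{L^2}$, $\|\Delta v\|^2_{L^2}$ from $N_1(F_3^{(0)}+F_4^{(0)})$. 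The $g$-weighted $\eta$-remainders produced on the right of \eqref{F_1_Estimate}, \eqref{F_2_Estimate}, \eqref{dF_dt_3_k}, \eqref{F_4_Estimate_k} and \eqref{dE_2_Dt} are controlled by the $-g'$-weighted dissipations via assumption (G3).

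The crux is the hierarchical selection of the free constants. First, fix $\epsilon_0,\epsilon_1,\epsilon_2,\epsilon_3,\epsilon_4,\epsilon_6$ small enough that each of \eqref{F_1_Estimate}, \eqref{F_2_Estimate}, \eqref{dF_dt_3_k}, \eqref{F_4_Estimate_k} yields strictly positive native coefficients (for instance $c_g^2/2$ in \eqref{F_1_Estimate} and $\tau(c^2-c_g^2)/2$ in \eqref{dF_dt_3_k}, \eqref{F_4_Estimate_k}). Next take $N_1$ large so that $N_1\tau(c^2-c_g^2)/2$ beats the $C(\epsilon_0)$ and $C(\epsilon_3,\epsilon_2)$ remainders involving $\|\Delta v\|^2_{L^2}$ and $\|\nabla v\|^2_{L^2}$ coming from $F_1^{(0)}$ and $2F_2^{(0)}$. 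Only then fix $\epsilon_5$ small enough (depending on $N_1$) that $2N_1\epsilon_5(c^2-c_g^2)$ stays strictly below the $2$-coefficient of $\|\nabla(v+\tau w)\|^2_{L^2}$ produced by $2F_2^{(0)}$. Afterwards, pick $\varepsilon$ small so that $N_0\varepsilon$ times the bad right-hand side of \eqref{dE_2_Dt}---namely $\|\Delta(\psi+\tau v)\|^2_{L^2}$, $\|\Delta v\|^2_{L^2}$, $\|\Delta\eta\|^2_{L^2,g}$---is absorbed by the already recovered dissipations; finally, take $N_0$ large to dominate the $C(\epsilon_\cdot)$-weighted $\eta$-remainders from the $F$-functionals.

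The main obstacle is precisely this chain of smallness/largeness in the last paragraph: $N_1$ must be fixed \emph{before} $\epsilon_5$, whereas $N_0$ must be fixed \emph{after} everything else, and the delicate balance for $\|\nabla(v+\tau w)\|^2_{L^2}$---which is lost from \eqref{dE_1_Dt}, re-created in $2F_2^{(0)}$, and partially re-consumed by $N_1(F_3^{(0)}+F_4^{(0)})$---is what forces the specific factor $2$ in front of $F_2^{(0)}$ in the definition \eqref{Lyapunov_F_crit_0}. The nonlinear contributions on the right, having only tested against smooth components of $\vecc{\Psi}$, bundle exactly into the six $R^{(\cdot)}_\kappa$ terms listed on the right-hand side of \eqref{F_Lyap}.
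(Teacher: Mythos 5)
Your overall plan matches the paper's: the equivalence \eqref{Eq_L_E} follows by bounding the cross terms $F_1^{(0)},\dots,F_4^{(0)}$ by Cauchy--Schwarz and taking $N_0$ large (the paper outsources this to a cited lemma), and the differential inequality \eqref{F_Lyap} comes from differentiating $\mathscr{F}^{(0)}$, applying \eqref{dE_1_Dt}, \eqref{dE_1_Dt_k} at $\kappa=0,1$, \eqref{dE_2_Dt} at $\kappa=0$, and \eqref{F_1_Estimate}--\eqref{F_4_Estimate_k}, then using (G3) to trade $g$-weighted for $-g'$-weighted $\eta$-terms. Your observation that $N_1$ must be fixed before $\epsilon_5$ is exactly right, and your reading of the role of the factor $2$ in front of $F_2^{(0)}$ is sound.

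However, your hierarchy ends with a genuine error: you fix $\varepsilon$ \emph{before} $N_0$ and then ``finally take $N_0$ large.'' This cannot work. The block $N_0\,\varepsilon\tau\|w\|^2_{L^2}$ in $\mathscr{F}^{(0)}$ is the source, via \eqref{dE_2_Dt}, of the unwanted right-hand terms
\[
C\,N_0\varepsilon\bigl(\|\Delta(\psi+\tau v)\|^2_{L^2}+\|\Delta v\|^2_{L^2}+\|\Delta\eta\|^2_{L^2,g}\bigr),
\]
and $N_0\varepsilon$ also enters the $\eta$-dissipation coefficient through $C_\eta=N_0(\tfrac12-C\varepsilon)-\Lambda_0$. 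If $\varepsilon$ is already fixed and you then send $N_0\to\infty$, the product $C N_0\varepsilon$ blows up and the coefficients in front of $\|\Delta(\psi+\tau v)\|^2_{L^2}$ and $\|\Delta v\|^2_{L^2}$ (which are bounded a priori, e.g.\ by $c_g^2$ and $N_1\tau(c^2-c_g^2)$ up to the $\epsilon$-corrections) become negative. The paper resolves this by fixing $N_0$ first---only large enough to beat the $\eta$-remainders, $N_0>2\Lambda_0$, which is independent of $\varepsilon$---and only then choosing $\varepsilon$ so small that $C N_0\varepsilon$ is dominated, keeping $C_\eta>0$, $C_{(\psi+\tau v)}>0$, and $C_{(\Delta v)}>0$ simultaneously. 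If you want to keep $N_0$ as the last free parameter, you would instead need to parametrize $\varepsilon=\delta/N_0$ with $\delta$ fixed small, but as written your ordering leaves the argument circular.
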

\begin{proof}
The proof of the equivalence follows along the lines of \cite[Lemma 4.5]{nikolic2020mathematical}, so we omit it here and focus on proving estimate \eqref{F_Lyap} instead. To this end, we take the time derivative of the Lyapunov functional \eqref{Lyapunov_F_crit_0}. Note that setting $\kappa=1$ in \eqref{dE_1_Dt_k}  yields the following estimate:
\begin{equation}  \label{dE_1_Dt_k_Crit}
\frac{\textup{d}}{\dt}E_2^{(1)}(t)+\frac{1}{2}\Vert \Delta \nabla\eta
\Vert _{L^2, -g'}^{2}\leq \, |R^{(2)}_1(\nabla  (v + \tau  w))|.   
\end{equation}%
By making use of energy estimates \eqref{dE_1_Dt} and \eqref{dE_1_Dt_k}, the above inequality, the derived bounds on $F^{(\kappa)}_{1, \ldots, 4}$, and assumption (G3) on the memory kernel, we infer 
\begin{equation}
\begin{aligned}
& \begin{multlined}[t]\frac{\textup{d}}{\dt}\mathscr{F}^{(0)}(t)+C_\eta\left[\Vert \nabla \eta\Vert _{L^2, -g'}^{2}+\Vert \Delta \eta
\Vert _{L^2, -g'}^{2}+\Vert \Delta \nabla\eta
\Vert _{L^2, -g'}^{2}\right]
+C_{(\psi+\tau v)}\Vert \Delta (\psi+\tau v)\Vert _{L^{2}}^{2}\\
+C_{(v+\tau w)}\Vert\nabla(v+\tau w)\Vert^{2}_{L^2}+C_{(\nabla v)}\Vert \nabla v\Vert_{L^2}^2+C_{(\Delta v)}\Vert \Delta v\Vert_{L^2}^2++C_w\|w\|^2_{L^2} \end{multlined}\\
\leq&\, \begin{multlined}[t] \Lambda_1\left(|R_0^{(2)}(\tau v)|
+|R_0^{(2)}(\psi + \tau v)|+|R_0^{(1)}(w)|+|R_0^{(1)}(v + \tau w)|\right. \\
\left.+|R_0^{(2)}(v + \tau w)| +|R^{(2)}_1(\nabla  (v + \tau  w))| \right),  \end{multlined}
\end{aligned}
\end{equation}
with the constants defined as
 \begin{equation}
 \left\{
\begin{array}{ll}
C_\eta= N_0(\frac{1}{2}-C\varepsilon)-\Lambda_0,\vspace{0.2cm}\\
C_w=\frac{N_0}{2}\varepsilon,\vspace{0.2cm}\\
C_{(\psi+\tau v)}=(c^2_g -\epsilon _{0}-(c^2 -c^2_g )\epsilon
_{1})-CN_0\varepsilon-2\epsilon_2,\vspace{0.2cm}\\
C_{(v+\tau w)}=(1-\epsilon_{3})-2N_1\epsilon_{5}(c^2-c_g^2),\vspace{0.2cm}\\
C_{(\nabla v)}=N_1\Big(\tau(c^2-c_g^2)-\epsilon_{4}g_{0}-\epsilon_{6}(c^2-c_g^2)\Big)-4C(\epsilon_2,\epsilon_3),\vspace{0.2cm}\\
C_{(\Delta v)}=N_1\Big(\tau(c^2-c_g^2)-\epsilon_{4}g_{0}-\epsilon_{6} (c^2-c_g^2)\Big)-2C(\epsilon_2,\epsilon_3)-C(\epsilon_0)-CN_0\varepsilon. 
\end{array}  
\right.
\end{equation}
Above, $\Lambda_0$ and $\Lambda_1$ are positive constants that may depend on the parameters $\epsilon_0,\epsilon_1, N_1,\dots$ The constant $\Lambda_0$ depends on $\zeta$, yet it is independent of $N_0$ and $\varepsilon$. Furthermore, the constant $\Lambda_1$ depends on $N_0$, yet it is independent of  $\zeta$. \\
\indent We can fix our parameters in such a way that $C_\eta,\dots,C_{(\Delta v)}$
are positive. Indeed, we first take $\epsilon_{0}=\epsilon_{1}$ and pick $\epsilon_{1}>0$ small enough, such that 
$$\epsilon_{1}<\dfrac{c_g^2}{1+(c^2-c_g^2)}.$$
Once $\epsilon_{0}$ and $\epsilon_{1}$ are fixed, we select  $\epsilon_{2}>0$ small enough so that
$$\epsilon_{2}<\dfrac{c_g^2-\epsilon_{0}(1+(c^2-c_g^2))}{2}.$$
Now we pick $\epsilon_{4}=\epsilon_{6}$ so that
$$0<\epsilon_{4}=\epsilon_{6}<\dfrac{\tau(c^2-c_g^2)}{g_{0}+(c^2-c_g^2)}.$$
We choose $\epsilon_{3}<\frac{1}{4}$ and we take $N_1$ large enough such that
$$N_1>\dfrac{C(\epsilon_{0})+2C(\epsilon_{2},\epsilon_{3})}{\tau(c^2-c_g^2)-\epsilon_{4}(g_{0}+(c^2-c_g^2))}.$$
Then we can select $\epsilon_{5}$ small enough such that 
$$ \epsilon_{5}<\dfrac{(1-\epsilon_{3})}{2N_1(c^2-c_g^2)}.$$
We next take $N_0$ large enough such that 
$N_0>2\Lambda_0.$
Finally, we fix $\varepsilon>0$ small enough such that 
\begin{eqnarray*}
\varepsilon<\min\left(\frac{N_0-2\Lambda_0}{2C},\frac{N_1\Big(\tau(c^2-c_g^2)-\epsilon_{4}g_{0}-\epsilon_{6} (c^2-c_g^2)\Big)-2C(\epsilon_2,\epsilon_3)-C(\epsilon_0)}{CN_0} \right).
\end{eqnarray*}
In this manner, we have arrived at estimate \eqref{F_Lyap}.
 \end{proof}
\section{Introduction of the artificial damping} \label{Sec:ArtificialDamping}
We next intend to derive a low-order energy estimate of a regularity-loss type for our problem. To this end, we introduce artificial damping to the system by considering time-weighted energies with a negative exponent.
\begin{proposition}\label{Lemma_Energy_Order_0}
Let $s_0 \geq n/2$ be an integer. The following bound holds:
\begin{equation}   
\begin{aligned}
&(1+t)^{-1/2}\mathbf{E}^{(0)}(t)+\Upsilon^{(0, 0)}(t)\\[0.1cm]
\lesssim&\,\begin{multlined}[t] \mathbf{E}^{(0)}(0)
+\Big(\mathdutchcal{M}[v, w, \vecc{U}](t)
+M_0[\boldsymbol{U}]+M_0[v]\Big)\Upsilon^{(0, 0)}(t), \end{multlined}
\end{aligned}
\end{equation}
where the energy $\mathbf{E}^{(0)}$ is defined in \eqref{energy_mathbfE}, the dissipative term $\mathbf{D}^{(0)}$ in \eqref{energy_mathbfD}, and
\begin{equation}
\Upsilon^{(0, 0)}(t)= \int_0^t (1+  \sigma)^{-3/2}\mathbf{E}^{(0)}(\sigma) \, \textup{d}\sigma+\int_0^t (1+\sigma)^{-1/2}\mathbf{D}^{(0)}(\sigma)\, \textup{d}\sigma;
\end{equation}
see \eqref{D_k_dissipation} below for the general definition of $\Upsilon^{j,\kappa}$.
\end{proposition}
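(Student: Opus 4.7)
The plan is to upgrade the Lyapunov inequality \eqref{F_Lyap} from a pointwise differential bound into an integrated bound carrying a negative-exponent time weight. This is the device that produces the artificial damping $(1+t)^{-3/2}\mathscr{F}^{(0)}$ on the left-hand side and, after invoking the equivalence \eqref{Eq_L_E}, yields the dissipative contribution $\int_0^t(1+\sigma)^{-3/2}\mathbf{E}^{(0)}(\sigma)\,\textup{d}\sigma$ inside $\Upsilon^{(0,0)}$. Concretely, \eqref{F_Lyap} combined with \eqref{Eq_L_E} gives
\[
\frac{\textup{d}}{\dt}\mathscr{F}^{(0)}(t)+\mathbf{D}^{(0)}(t)\lesssim \mathcal{N}(t),
\]
where $\mathcal{N}(t)$ collects the six nonlinear remainders $|R_0^{(1)}(\cdot)|$, $|R_0^{(2)}(\cdot)|$ and $|R_1^{(2)}(\nabla(v+\tau w))|$ appearing on the right-hand side of \eqref{F_Lyap}. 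Multiplying by $(1+t)^{-1/2}$ and moving the derivative of the weight to the left produces
\[
\frac{\textup{d}}{\dt}\bigl[(1+t)^{-1/2}\mathscr{F}^{(0)}(t)\bigr]+\tfrac{1}{2}(1+t)^{-3/2}\mathscr{F}^{(0)}(t)+(1+t)^{-1/2}\mathbf{D}^{(0)}(t)\lesssim (1+t)^{-1/2}\mathcal{N}(t).
\]
Integrating over $[0,t]$ and applying \eqref{Eq_L_E} once more yields
\[
(1+t)^{-1/2}\mathbf{E}^{(0)}(t)+\Upsilon^{(0,0)}(t)\lesssim \mathbf{E}^{(0)}(0)+\int_0^t (1+\sigma)^{-1/2}\mathcal{N}(\sigma)\,\textup{d}\sigma.
\]

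The remaining work is to absorb the nonlinear integral into the right-hand side of the claim. Every term in $\mathcal{N}$ has the schematic form $\int F^{(\kappa)}\varphi\,\dx$ or $\int\nabla F^{(\kappa)}\cdot\nabla\varphi\,\dx$ with $F^{(0)}=2k(vw+\nabla\psi\cdot\nabla v)$ and $F^{(1)}$ given by \eqref{F_k_Form}. For each integral, H\"older's inequality places the factor with the worst decay in $L^\infty$ and the remaining factors in $L^2$. Writing $\nabla\psi=\nabla(\psi+\tau v)-\tau\nabla v$, the $L^2$ factors can be identified with either $\mathbf{E}^{(0)}(\sigma)^{1/2}$ or $\mathbf{D}^{(0)}(\sigma)^{1/2}$ via \eqref{identity_EHs}, while the $L^\infty$ factors are controlled through \eqref{M_j_Def}--\eqref{Def_Mcal} by $(1+\sigma)^{-n/2}M_0[v](t)$, $(1+\sigma)^{-n/2}M_0[\vecc{U}](t)$, or, after one Gagliardo--Nirenberg step as in Lemma~\ref{M_estimate}, by $(1+\sigma)^{-n/2-1/2}\mathdutchcal{M}[v,w,\vecc{U}](t)$. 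For $n\geq 3$ every such $L^\infty$ prefactor decays at least like $(1+\sigma)^{-3/2}$, so that after pairing with the external multiplier $(1+\sigma)^{-1/2}$ and using Cauchy--Schwarz in time, each summand is dominated by $\Upsilon^{(0,0)}(t)$ multiplied by one of the functionals $M_0[v]$, $M_0[\vecc{U}]$ or $\mathdutchcal{M}[v,w,\vecc{U}]$, which is exactly the structure of the claim.

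The most delicate step will be the commutator term $R_1^{(2)}(\nabla(v+\tau w))$, where $F^{(1)}$ contains $[\nabla,v]w$ and $[\nabla,\nabla\psi]\cdot\nabla v$; these are handled by the standard Kato--Ponce commutator estimate, combined with interpolation, so that the extra derivative lands on the smoother factor and the surviving $L^2$ norms fit into $\mathbf{D}^{(0)}$ rather than into the stronger $\mathbf{E}^{(0)}$. The hypothesis $s_0\geq n/2$ is what permits the Gagliardo--Nirenberg argument of Lemma~\ref{M_estimate} that routes $M_1[\vecc{U}]$ through $\mathdutchcal{M}$; without it, an additional $M_1[\vecc{U}]$ factor would appear on the right-hand side, breaking the form stated in the proposition.
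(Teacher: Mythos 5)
Your overall strategy matches the paper's: multiply the Lyapunov estimate \eqref{F_Lyap} by the negative-exponent weight $(1+\sigma)^{-1/2}$, integrate in time, use the equivalence \eqref{Eq_L_E} to pass between $\mathscr{F}^{(0)}$ and $\mathbf{E}^{(0)}$, and then bound the six nonlinear remainders. However, there are two concrete misstatements worth correcting.

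First, you write that after the commutator estimates for $R_1^{(2)}(\nabla(v+\tau w))$ ``the surviving $L^2$ norms fit into $\mathbf{D}^{(0)}$ rather than into the stronger $\mathbf{E}^{(0)}$.'' This is the opposite of what happens and would undo the raison d'\^etre of the weighted norm. Expanding $\|\nabla F^{(1)}\|_{L^2}\,\|\nabla^2(v+\tau w)\|_{L^2}$ produces, among others, $\|\nabla^3\psi\|_{L^2}$, $\|\nabla^2 w\|_{L^2}$, $\|\nabla^3 v\|_{L^2}$ and $\|\nabla^2(v+\tau w)\|_{L^2}$; none of these are controlled by $\mathbf{D}^{(0)}$ in \eqref{energy_mathbfD} (which stops at $\Delta v$, $\Delta(\psi+\tau v)$, $\nabla(v+\tau w)$, $w$). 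They \emph{are} controlled by $\mathbf{E}^{(0)}$ via its $H^1$-level blocks, which is precisely why the paper needs the artificial damping $\int_0^t(1+\sigma)^{-3/2}\mathbf{E}^{(0)}(\sigma)\,\textup{d}\sigma$ inside $\Upsilon^{(0,0)}$ (and explicitly observes that the classical energy method, which would try to absorb into $\int(1+\sigma)^{-1/2}\mathbf{D}^{(0)}$, fails). After the $L^\infty$ or $\dot H^{(n-2)/2}$ factor contributes $(1+\sigma)^{-n/2}$ or $(1+\sigma)^{-n/4}$, the total time-weight is at worst $(1+\sigma)^{-n/4-1/2}\geq(1+\sigma)^{-3/2}$ only for $n\geq 4$; more precisely the paper uses the stronger $(1+\sigma)^{-(n+1)/2}\leq(1+\sigma)^{-3/2}$ decay coming from the $L^\infty$ bound, so these $\mathbf{E}^{(0)}$-level products are absorbed by the $(1+\sigma)^{-3/2}\mathbf{E}^{(0)}$ piece of $\Upsilon^{(0,0)}$, not by $\mathbf{D}^{(0)}$. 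You should keep your opening observation (that the negative weight manufactures the artificial damping) and then use it consistently in the absorption step.

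Second, the claimed role of the hypothesis $s_0\geq n/2$ is off. You say it permits Lemma~\ref{M_estimate} to route $M_1[\vecc{U}]$ through $\mathdutchcal{M}$, but Lemma~\ref{M_estimate} requires $s_0\geq n/2+1$, and $M_1[\vecc{U}]$ does not appear on the right-hand side of this order-zero proposition at all (it enters only in the higher-order analysis). The actual purpose of $s_0\geq n/2$ here is the index condition $\tfrac{n-2}{2}\leq s_0-1$ underlying \eqref{v_hom_Est}--\eqref{psi_hom_Est}, which lets the $\dot H^{(n-2)/2}$ norms of $v$ and $\nabla\psi$ be dominated by $\mathdutchcal{M}[v,w,\vecc{U}]$ after the endpoint Sobolev embedding pairing $L^n\times L^{2n/(n-2)}$. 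With these two corrections your argument coincides with the paper's.
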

\begin{proof}
Multiplying estimate \eqref{F_Lyap} by $(1+\sigma)^{\gamma}$ and integrating from $0$ to $t$ yields 
\begin{equation}    \label{Main_Estimate_0}
\begin{aligned} 
&(1+t)^\gamma\mathbf{E}^{(0)}(t)+\int_0^t (1+\sigma)^{\gamma}\mathbf{D}^{(0)}(\sigma) \, \textup{d}\sigma\\
\lesssim&\, \mathbf{E}^{(0)}(0)+ \gamma \int_0^t (1+  \sigma)^{\gamma-1}\mathbf{E}^{(0)}(\sigma)  \, \textup{d}\sigma
+\int_0^t (1+\sigma)^{\gamma}\mathbf{R}_0(\sigma) \, \textup{d}\sigma, 
\end{aligned}
\end{equation} 
where we have set 
\begin{equation} \label{R0}
\begin{aligned}
\mathbf{R}_0 = \begin{multlined}[t]|R_0^{(2)}(\tau v)|
+|R_0^{(2)}(\psi + \tau v)|+|R_0^{(1)}(w)|+|R_0^{(1)}(v + \tau w)| \\
+|R_0^{(2)}(v + \tau w)| +|R^{(2)}_1(\nabla  (v + \tau  w))|. \end{multlined} 
\end{aligned}
\end{equation}
To derive the above bound, we have used Lemma \ref{Lemma_Equivl_2}, i.e.,
\begin{equation}\label{eq_F_0_E}
\mathscr{F}_0(t)\sim \mathbf{E}^{(0)}(t)
\end{equation}
and that
\begin{equation}
(1+t)^{\gamma}\frac{\textup{d}}{\dt}\mathscr{F}^{(0)}(t)=\frac{\textup{d}}{\dt} (1+t)^{\gamma}\mathscr{F}^{(0)}-\gamma (1+t)^{\gamma-1} \mathscr{F}^{(0)}.
\end{equation}
The last term in estimate \eqref{Main_Estimate_0} contains the ``problematic"  term 
\begin{eqnarray*}
\int_0^t (1+\sigma)^{\gamma}|R^{(2)}_1(\nabla  (v + \tau  w))(\sigma)|\, \textup{d}\sigma. 
\end{eqnarray*}
The above term is responsible for the regularity loss. Keeping in mind the estimate \eqref{R_2_kappa_New}, it can be treated as follows: 
\begin{eqnarray*}
\int_0^t (1+\sigma)^{\gamma}|R^{(2)}_1(\nabla  (v + \tau  w))(\sigma)|\, \textup{d}\sigma \lesssim (M_0[\vecc{U}](t)+M_0[v](t)) \int_0^t (1+\sigma)^{\gamma} \mathbf{D}_1^2(\sigma)\, \textup{d}\sigma.    
\end{eqnarray*}
However, the term $\int_0^t (1+\sigma)^{\gamma} \mathbf{D}_1^2(\sigma)\, \textup{d}\sigma$ cannot be absorbed by $\int_0^t (1+\sigma)^{\gamma}\mathbf{D}_0^2(\sigma)\, \textup{d}\sigma$ on the left-hand side of \eqref{Main_Estimate_0} due to the loss of derivatives going from $\mathbf{D}_1^2$ to $\mathbf{D}_0^2$.  So, the classical energy method fails. To overcome this difficulty and inspired by \cite{IK08}, we use a time-dependent weight with a negative exponent; see also~\cite{HKa06, Racke_Said_2012_1}.  In other words, we take $\gamma<0$ in estimate \eqref{Main_Estimate_0} to obtain an artificial damping term \[-\gamma \int_0^t (1+  \sigma)^{\gamma-1}\mathbf{E}^{(0)}(\sigma) \, \textup{d}\sigma\] on the left-hand side. This damping allows us to control the term $\int_0^t (1+\sigma)^{\gamma} \mathbf{D}_1^2(\sigma)\, \textup{d}\sigma$. \\
\indent  Indeed, by taking  $\gamma=-1/2$, we have
\begin{equation}   
\label{Main_Estimate_0_1}
(1+t)^{-1/2}\mathbf{E}^{(0)}(t)+\Upsilon^{(0,0)}(t)
\lesssim \mathbf{E}^{(0)}(0)
+\int_0^t (1+  \sigma)^{-1/2}\mathbf{R}  _0(\sigma)\, \textup{d}\sigma.
\end{equation}
It remains to estimate the six terms contained within the $\mathbf{R}_0$ term on the right; see \eqref{R0} for its definition. First we have
 \begin{equation}
\begin{aligned}
|R_1^{(2)}(\nabla(v+\tau w))|
&\leq \Vert \nabla F^{(1)}\Vert _{L^{2}}%
\Vert\nabla^2\left( v+\tau w\right)\Vert_{L^2},
\end{aligned}  
\end{equation}
where we also recall how $F^{(1)}$ is defined in \eqref{F_k_Form}. We can bound this term as follows:
\begin{equation} \label{R_1_k_Nabla}
\begin{aligned}
\Vert \nabla F^{(1)}\Vert _{L^{2}} \lesssim&\, \begin{multlined}[t] \Vert w\Vert
_{L^{\infty }}\Vert \nabla ^{2}v\Vert _{L^{2}}+\Vert v\Vert _{L^{\infty
}}\Vert \nabla ^{2}w\Vert _{L^{2}}   \\
+ \Vert \nabla \psi\Vert _{L^{\infty }}\Vert \nabla ^{3}v\Vert 
_{L^{2}}+\Vert \nabla v\Vert _{L^{\infty }}\Vert \nabla ^{3}\psi\Vert
_{L^{2}}. \end{multlined}
\end{aligned}
\end{equation}
Setting $\vecc{U} =(v+\tau w,\nabla(\psi+\tau v),\nabla v)^T$, we have
\begin{equation}
\begin{aligned}
\|v\|_{L^\infty}+\Vert w\Vert_{L^\infty}\lesssim&\, \Vert v+\tau w \Vert_{L^\infty}+\Vert v\Vert_{L^\infty}\\
\lesssim& \,(1+t)^{-n/2}\big(M_0[\vecc{U}](t)+M_0[v](t)\big).
\end{aligned}
\end{equation}
Similarly, it holds
\begin{equation}
\begin{aligned}
\Vert \nabla \psi\Vert _{L^{\infty }}+\Vert \nabla v\Vert _{L^{\infty }}\lesssim&\,  (1+t)^{-n/2} \sup_{0\leq \sigma\leq t}\left( 1+\sigma \right) ^{%
	\frac{n}{2}}\left\Vert \vecc{U}\left( \sigma \right) \right\Vert _{L^{\infty }}\\
	\lesssim&\,(1+t)^{-n/2}M_0[\vecc{U}](t).
	\end{aligned}
\end{equation}
We obtain from above
\begin{equation}\label{R_1_2_Estimate}  
\begin{aligned}  
&\int_0^t (1+\sigma)^{-1/2}|R^{(2)}_1(\nabla  (v + \tau  w))(\sigma)|\, \textup{d}\sigma\\
\lesssim&\,\, \big(M_0[\vecc{U}](t)+M_0[v](t)\big)\\  
&\times \int_0^t (1+\sigma)^{-\frac{n+1}{2}}\Big[\Vert \nabla ^{2}v\Vert _{H^{1}}+\Vert \nabla ^{2}w\Vert _{L^{2}}+\Vert \nabla ^{3}\psi\Vert  
_{L^{2}}\Big]\Vert\nabla^2\left( v+\tau w\right)\Vert_{L^2}\, \textup{d}\sigma\\
\lesssim&\, \big(M_0[\vecc{U}](t)+M_0[v](t)\big)\Upsilon^{(0,0)}(t). 
\end{aligned}
\end{equation}
Next we wish to estimate the $R_0^{(1)}$ terms within \eqref{Main_Estimate_0_1} on the right. We have
\begin{equation}  \label{R_1_Estimate}
\begin{aligned}
&\int_0^t | R^{(1)}_0(v+\tau w)(\sigma)|\, \textup{d}\sigma+\int_0^t| R^{(1)}_0(w)(\sigma)|\, \textup{d}\sigma\\
\lesssim&\,
\sup_{0\leq \sigma\leq t}\Big(\Vert \nabla \psi(\sigma)\Vert_{L^\infty}+\Vert\nabla \psi(\sigma)\Vert_{\dot{H}^{\frac{n-2}{2}}}+\Vert v(\sigma)\Vert_{\dot{H}^{\frac{n-2}{2}}}\Big)%
\mathbf{D}^2_0(t).
\end{aligned}
\end{equation}
We note that
\begin{equation}\label{v_hom_Est}
\begin{aligned}
\Vert v(\sigma)\Vert_{\dot{H}^{\frac{n-2}{2}}}\lesssim&\, \Vert v(\sigma)\Vert_{H^{\frac{n-2}{2}}}
\lesssim\, \begin{multlined}[t](1+t)^{-\frac{n}{4}}\mathdutchcal{M}[v, w, \vecc{U}](t),
\end{multlined}
\end{aligned}
\end{equation}
and 
\begin{equation}\label{psi_hom_Est}
\begin{aligned}
\Vert\nabla \psi(\sigma)\Vert_{\dot{H}^{\frac{n-2}{2}}}\lesssim&\, \Vert\nabla \psi(\sigma)\Vert_{H^{\frac{n-2}{2}}}
\lesssim\,  \begin{multlined}[t](1+t)^{-\frac{n}{4}} \mathdutchcal{M}[v, w, \vecc{U}](t).\end{multlined}
\end{aligned}
\end{equation} 
For the above bounds to hold, it is crucial that $\frac{n-2}{2}\leq s_0-1$. By making use of \eqref{M_j_Def}, \eqref{v_hom_Est} and \eqref{psi_hom_Est}, and  
applying estimate \eqref{R_1_Estimate}, we have  
\begin{equation}\label{R_1_First_Term_}
\begin{aligned}  
&\int_0^t (1+\sigma)^{-1/2}\big(| R_0^{(1)}(v+\tau w)(\sigma)|+| R_0^{(1)}(w)(\sigma)|\big)\, \textup{d}\sigma\\
\lesssim &\,\Big(\mathdutchcal{M}[v, w, \vecc{U}](t)+M_0[v](t)\Big)
\int_0^t (1+\sigma)^{-\frac{n}{4}-\frac{1}{2}}\mathbf{D}^2_0(\sigma)\, \textup{d}\sigma, 
\end{aligned}
\end{equation}
from which we further derive
\begin{equation}\label{R_1_First_Term}
\begin{aligned}  \
&\int_0^t (1+\sigma)^{-1/2}\big(| R_0^{(1)}(v+\tau w)(\sigma)|+| R_0^{(1)}(w)(\sigma)|\big)\, \textup{d}\sigma\\
\lesssim &\,\Big(\mathdutchcal{M}[v, w, \vecc{U}](t)+M_0[v](t)\Big)
\Upsilon^{(0,0)}(t). 
\end{aligned}
\end{equation}
It remains to estimate the $R_0^{(2)}$ terms within \eqref{Main_Estimate_0_1} on the right. We note that
\begin{equation}  \label{R_2_Estimate}
\begin{aligned}
 |R_0^{(2)}(v+\tau w)|\leq& \, \begin{multlined}[t]\sup_{0\leq \sigma\leq t}\Big(\left\Vert
v(\sigma)\right\Vert _{L^{\infty }}+\left\Vert\nabla
v(\sigma)\right\Vert _{L^{\infty }}\Big.\\
\Big.+\left\Vert (v+\tau w)(\sigma)\right\Vert
_{L^{\infty }}+\left\Vert \nabla (\psi+\tau v)(\sigma)\right\Vert _{L^{\infty }}\Big)%
\mathbf{D}_0^2(t).\end{multlined}
\end{aligned}  
\end{equation}
We then have  
\begin{equation}\label{R_2_Terms_1}
\begin{aligned}
\int_0^t (1+\sigma)^{-1/2}| R_0^{(2)}(v+\tau w)(\sigma)|\, \textup{d}\sigma 
\lesssim&\, \big(M_0[\vecc{U}](t)+M_0[v](t)\big) \int_0^t (1+\sigma)^{-\frac{n}{2}-\frac{1}{2}} \mathbf{D}_0^2(\sigma) \, \textup{d}\sigma \\
\lesssim&\, \big(M_0[\vecc{U}](t)+M_0[v](t)\big)\Upsilon^{(0,0)}(t). 
\end{aligned}
\end{equation}
Similarly, we can show that 
\begin{equation}\label{R_2_Second_Term}
\begin{aligned}
&\int_0^t (1+\sigma)^{-1/2}|R_0^{(2)}((\psi+\tau v))(\sigma)|+|R_0^{(2)}(\tau v)(\sigma)|\, \textup{d}\sigma\\
&\lesssim \big(M_0[\vecc{U}](t)+M_0[v](t)\big)  \Upsilon^{(0,0)}(t). 
\end{aligned}
\end{equation}
Now, by collecting the derived bounds, we deduce that 
  \begin{equation}\label{R_0_Main_Estimate}
  \begin{aligned}
\int_0^t (1+\sigma)^{-1/2}\mathbf{R}_0(\sigma)\, \textup{d}\sigma
\lesssim&\, \begin{multlined}[t] \Big(\mathdutchcal{M}[v, w, \vecc{U}](t) +M_0[\vecc{U}](t)+M_0[v](t)\Big)\Upsilon^{(0,0)}(t). \end{multlined}
\end{aligned}
\end{equation}
Plugging this bound into \eqref{Main_Estimate_0_1} completes the proof.
\end{proof}  
\subsection{The Lyapunov functional of higher order}
Our next aim is to extend the previous considerations to higher-order energies. We thus retrace our previous steps, but now adapted to space-differentiated system \eqref{Main_System_kappa}. We define the Lyapunov functional of order $\kappa \geq 1$ analogously as
\begin{equation}\label{Lyapunov_F_crit}
\begin{aligned}
\mathscr{F}^{(\kappa)}(t)=& \, \begin{multlined}[t]N_0 (E_1^{(\kappa)}(t)+E_2^{(\kappa)}+E_2^{(\kappa+1)}(t)+\varepsilon\tau \Vert w\Vert^2 _{L^{2}})\vspace{0.2cm}\\+F_1^{(\kappa)}(t)+2F_2^{(\kappa)}(t)+N_1(F_3^{(0)}(t)+F_4^{(\kappa)}(t)),  \end{multlined}
\end{aligned}   
\end{equation}
for $t \geq 0$. We claim that a higher-order version of estimate \eqref{Main_Estimate_0} holds as well.
 \begin{proposition}\label{Pro_Higher_Order}
	Assume that $b=\tau c^2$. Then, for any integer $\kappa\geq 1$,  it holds  
	\begin{equation}
	\begin{aligned}
	\label{Main_Estimate_k}
	&(1+t)^\gamma\mathbf{E}^{(\kappa)}(t)+\int_0^t (1+\sigma)^{\gamma}\mathbf{D}^{(\kappa)}(\sigma)\, \textup{d}\sigma\\
	 \lesssim& \, \begin{multlined}[t]\mathbf{E}^{(\kappa)}(0)+ \gamma \int_0^t (1+\sigma)^{\gamma-1}\mathbf{E}^{(\kappa)}(\sigma) \, \textup{d}\sigma
	+\int_0^t (1+\sigma)^{\gamma}\mathbf{R}_\kappa(\sigma)\, \textup{d}\sigma.\end{multlined}
	\end{aligned} 
	\end{equation}
\end{proposition}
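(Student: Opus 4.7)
The plan is to mirror the strategy used for the order-zero Lyapunov estimate \eqref{F_Lyap} in Lemma \ref{Lemma_Equivl_2}, applied to the space-differentiated system \eqref{Main_System_kappa}. The key structural observation is that $\nabla^\kappa$ commutes with every linear operation in \eqref{Main_System}, so the linear terms in the energy identities for $E_1^{(\kappa)}$, $E_2^{(\kappa)}$, $E_2^{(\kappa+1)}$ and the auxiliary functionals $F_1^{(\kappa)}, \ldots, F_4^{(\kappa)}$ have exactly the same algebraic form as in the $\kappa=0$ case; only the nonlinear source is replaced by $F^{(\kappa)}$ from \eqref{F_k_Form}, and all remainder contributions are therefore collected into a single term $\mathbf{R}_\kappa$ built as in \eqref{R0}.

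First, I would verify the equivalence $\mathscr{F}^{(\kappa)}(t) \sim \mathbf{E}^{(\kappa)}(t)$. Using Cauchy--Schwarz, Young's inequality, and the bound $\int_0^\infty g(r)\,\textup{d}r = c^2 - c_g^2$, each functional $F_i^{(\kappa)}$ is controlled by terms already present in $\mathbf{E}^{(\kappa)}$; taking $N_0$ sufficiently large then ensures that the positive contribution $N_0(E_1^{(\kappa)} + E_2^{(\kappa)} + E_2^{(\kappa+1)} + \varepsilon\tau\|w\|_{L^2}^2)$ dominates the indefinite cross terms. This step is essentially a transcription of the proof of \eqref{Eq_L_E}, since the constants produced depend only on the linear structure and on the memory kernel, not on the derivative order.

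Next, I would compute $\tfrac{\textup{d}}{\dt}\mathscr{F}^{(\kappa)}(t)$ by combining the identities already derived: inequality \eqref{dE_1_Dt} for $E_1^{(\kappa)}$, inequality \eqref{dE_1_Dt_k} for $E_2^{(\kappa)}$ and (at order $\kappa+1$) for $E_2^{(\kappa+1)}$, estimate \eqref{dE_2_Dt} for $\|\nabla^\kappa w\|_{L^2}^2$, the bounds \eqref{F_1_Estimate}--\eqref{F_2_Estimate} for $F_1^{(\kappa)}$, $F_2^{(\kappa)}$, estimate \eqref{dF_dt_3_k} for $F_3^{(\kappa)}$, and the bound \eqref{F_4_Estimate_k} for $F_4^{(\kappa)}$. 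Using hypothesis (G3) to absorb the $\|\cdot\|_{L^2,g}^2$ contributions into $\|\cdot\|_{L^2,-g'}^2$ and picking $\epsilon_0,\ldots,\epsilon_6, N_0, N_1, \varepsilon$ in the same cascade as in Lemma \ref{Lemma_Equivl_2} (again, $\kappa$-independent), one obtains
\begin{equation*}
\frac{\textup{d}}{\dt}\mathscr{F}^{(\kappa)}(t) + c\, \mathbf{D}^{(\kappa)}(t) \;\lesssim\; \mathbf{R}_\kappa(t),
\end{equation*}
for a positive constant $c$, where $\mathbf{R}_\kappa$ gathers the various $|R_\kappa^{(1)}(\cdot)|$ and $|R_\kappa^{(2)}(\cdot)|$ terms (including the regularity-loss contribution $|R_{\kappa+1}^{(2)}(\nabla^{\kappa+1}(v+\tau w))|$ that arises from the $E_2^{(\kappa+1)}$ component).

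Finally, I would multiply this differential inequality by $(1+t)^\gamma$ and use
\begin{equation*}
(1+t)^\gamma \frac{\textup{d}}{\dt}\mathscr{F}^{(\kappa)}(t) = \frac{\textup{d}}{\dt}\bigl[(1+t)^\gamma \mathscr{F}^{(\kappa)}(t)\bigr] - \gamma (1+t)^{\gamma-1} \mathscr{F}^{(\kappa)}(t),
\end{equation*}
then integrate from $0$ to $t$ and invoke the equivalence $\mathscr{F}^{(\kappa)} \sim \mathbf{E}^{(\kappa)}$ to arrive at \eqref{Main_Estimate_k}. The only obstacle worth flagging is the bookkeeping needed to confirm that the same parameter choices as in the $\kappa=0$ case close the argument uniformly in $\kappa\geq 1$; this is where the restriction $\kappa\geq 1$ plays no role beyond ensuring the commutators in $F^{(\kappa)}$ are well defined, and the regularity loss encoded by the $E_2^{(\kappa+1)}$-term is deliberately left unresolved at this stage, to be handled later through the negative-exponent weight $\gamma<0$ exactly as in the proof of Proposition \ref{Lemma_Energy_Order_0}.
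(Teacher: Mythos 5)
Your proposal follows precisely the paper's own argument: establish the equivalence $\mathscr{F}^{(\kappa)}\sim\mathbf{E}^{(\kappa)}$ for $N_0$ large, differentiate the higher-order Lyapunov functional using Proposition~\ref{Prop:E1_k} and the bounds on $F_1^{(\kappa)},\dots,F_4^{(\kappa)}$ to obtain $\tfrac{\textup{d}}{\dt}\mathscr{F}^{(\kappa)}+c\,\mathbf{D}^{(\kappa)}\lesssim\mathbf{R}_\kappa$ (the paper's estimate \eqref{F_Lyap_k}), then multiply by $(1+t)^\gamma$, integrate, and invoke the equivalence to arrive at \eqref{Main_Estimate_k}. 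This matches the paper's proof in every structural step, including the observation that the parameter cascade is $\kappa$-independent and that the regularity-loss term from $E_2^{(\kappa+1)}$ is simply collected into $\mathbf{R}_\kappa$ and deferred.
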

 \begin{proof}
It can be shown that $\mathscr{F}^{(\kappa)}(t)$ is equivalent to $\mathbf{E}^{(\kappa)}$; see \cite[Lemma 4.5]{nikolic2020mathematical} for a similar analysis when $\kappa=0$.  By taking the time derivative of $\mathscr{F}^{(\kappa)}(t)$, making use of Proposition \ref{Prop:E1_k}, and the derived bounds on $F^{(\kappa)}_{1, \ldots, 4}$, we obtain 
\begin{equation}
\begin{aligned}
\label{F_Lyap_k}
&\begin{multlined}[t]\frac{\textup{d}}{\dt}\mathscr{F}^{(\kappa)}(t)+\Vert \nabla^{\kappa+1} \eta\Vert _{L^2, -g'}^{2}+\Vert \Delta\nabla^\kappa \eta
\Vert _{L^2, -g'}^{2}+\Vert \Delta \nabla^{\kappa+1}\eta
\Vert _{L^2, -g'}^{2}+\|\nabla^\kappa w\|^2_{L^2}\\
+\Vert \Delta \nabla^\kappa(\psi+\tau v)\Vert _{L^{2}}^{2} +\Vert\nabla^{\kappa+1}(v+\tau w)\Vert^{2}_{L^2}+\Vert \nabla^{\kappa+1} v\Vert_{L^2}^2+\Vert\Delta \nabla^\kappa v\Vert_{L^2}^{2}\end{multlined}\\
\lesssim& \, \begin{multlined}[t]|R_\kappa^{(2)}(\tau \nabla^\kappa v)|
+|R_\kappa^{(2)}(\nabla^\kappa(\psi + \tau v))|+|R_\kappa^{(1)}(\nabla ^\kappa w)|\\
	+|R_\kappa^{(1)}(\nabla^\kappa(v + \tau w))|+|R_\kappa^{(2)}(\nabla^\kappa(v + \tau w))| +|R^{(2)}_{\kappa+1}(\nabla^{\kappa+1}  (v + \tau  w))|\end{multlined}
	\end{aligned} 
\end{equation}
provided that the constant $N_0$ in the Lyapunov functional is chosen to be large enough. Multiplying the above estimate by $(1+\sigma)^{\gamma}$, integrating from $0$ to $t$, and relying on the fact that $\mathscr{F}^{(\kappa)}(t)$ is equivalent to $\mathbf{E}^{(\kappa)}$ yield \eqref{Main_Estimate_k}. 
\end{proof}
\subsection{Estimates of the right-hand side terms of higher order}\label{Section_Induction}
The main challenge in deriving a higher-order version of Proposition~\ref{Lemma_Energy_Order_0} is to control the right-hand side terms contained within $\int_0^t (1+\sigma)^{\gamma}\mathbf{R}_\kappa(\sigma)\, \textup{d}\sigma$ in estimate \eqref{Main_Estimate_k}. \\
\indent In particular, we set $\gamma=j-1/2$ for an integer $j \geq 0$.
  \begin{theorem}\label{Thm_R_k}
Let $s_0\geq [n/2]+2$
  in the definition of the function $\mathdutchcal{M}[v, w, \vecc{U}](t)$. For any integers $\kappa\geq 1$ and $j\geq 0$, the following estimate holds:
	\begin{equation}\label{R_j_Estimates_k}
	\begin{aligned}  
	& \int_0^t (1+\sigma)^{j-1/2}\mathbf{R}_\kappa(\sigma)\, \textup{d}\sigma \\
		\lesssim&\, \mathdutchcal{M}[v, w, \vecc{U}](t)\Big(\Upsilon^{(j,\kappa)}(t)+\int_0^t (1+\sigma)^{j-3/2}\mathbf{D}^{\kappa-1}(\sigma)\textup{d}\sigma\Big),
	\end{aligned}
	\end{equation}
where 
\begin{equation}
\begin{aligned}
\mathbf{R}_\kappa=\begin{multlined}[t]|R_\kappa^{(2)}(\tau \nabla^\kappa v)|
+|R_\kappa^{(2)}(\nabla^\kappa(\psi + \tau v))|+|R_\kappa^{(1)}(\nabla ^\kappa w)|\\
+|R_\kappa^{(1)}(\nabla^\kappa(v + \tau w))|+|R_\kappa^{(2)}(\nabla^\kappa(v + \tau w))| +|R^{(2)}_{\kappa+1}(\nabla^{\kappa+1}  (v + \tau  w))|\end{multlined}
\end{aligned}
\end{equation}	
and
\begin{equation}\label{D_k_dissipation}
\Upsilon^{(j,\kappa)}(t)= \int_0^t (1+  \sigma)^{j-3/2}\mathbf{E}^{(\kappa)}(\sigma) \, \textup{d}\sigma+\int_0^t (1+\sigma)^{j-1/2}\mathbf{D}^{(\kappa)}(\sigma)\, \textup{d}\sigma. 
\end{equation}
\end{theorem}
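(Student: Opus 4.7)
The plan is to expand $\mathbf{R}_\kappa$ into its six summands and bound each one by Cauchy--Schwarz, which reduces the task to estimating $\|F^{(\kappa)}\|_{L^2}$, $\|\nabla F^{(\kappa)}\|_{L^2}$, and, for the last summand, $\|\nabla F^{(\kappa+1)}\|_{L^2}$. Recalling the definition of $F^{(\kappa)}$ in \eqref{F_k_Form}, the two workhorses are the Moser product estimate and the Kato--Ponce commutator estimate
\begin{equation*}
\|[\nabla^\kappa,f]g\|_{L^2}\lesssim \|\nabla f\|_{L^\infty}\|\nabla^{\kappa-1}g\|_{L^2}+\|\nabla^\kappa f\|_{L^2}\|g\|_{L^\infty},
\end{equation*}
applied with $f\in\{v,\nabla\psi\}$ and $g\in\{w,\nabla v\}$. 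The analogous bound for $\nabla F^{(\kappa)}$ is immediate after another use of Leibniz. Each resulting factorization consists of a low-order $L^\infty$ factor involving $v$, $w$, $\nabla\psi$, or $\nabla v$ and a high-order $L^2$ factor.

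By Lemma~\ref{M_estimate} and the definitions of $M_0$, $M_1$, and $\mathdutchcal{M}[v,w,\vecc{U}]$, each $L^\infty$ norm carries a pointwise-in-time decay of the form $(1+\sigma)^{-n/2}$ or $(1+\sigma)^{-n/2-1/2}$ multiplied by $\mathdutchcal{M}[v,w,\vecc{U}](t)$. The accompanying $L^2$ factors of top order, such as $\|\nabla^{\kappa+1}v\|_{L^2}$, $\|\nabla^{\kappa+1}(v+\tau w)\|_{L^2}$, $\|\Delta\nabla^\kappa(\psi+\tau v)\|_{L^2}$, $\|\Delta\nabla^\kappa v\|_{L^2}$, and $\|\nabla^\kappa w\|_{L^2}$, all appear in $\mathbf{D}^{(\kappa)}(\sigma)$. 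After Cauchy--Schwarz in time, the surplus decay from the $L^\infty$ factor absorbs the weight $(1+\sigma)^{j-1/2}$ and produces exactly $\int_0^t(1+\sigma)^{j-1/2}\mathbf{D}^{(\kappa)}(\sigma)\,\textup{d}\sigma\subset\Upsilon^{(j,\kappa)}(t)$. The commutator parts of the Kato--Ponce bound shed one derivative from the high-order factor and so place quantities such as $\|\nabla^{\kappa-1}w\|_{L^2}$ or $\|\Delta\nabla^{\kappa-1}v\|_{L^2}$ inside $\mathbf{D}^{(\kappa-1)}(\sigma)$; this is the origin of the second term in \eqref{R_j_Estimates_k}.

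The main obstacle is the regularity-loss term $|R^{(2)}_{\kappa+1}(\nabla^{\kappa+1}(v+\tau w))|$, which by definition pairs $\nabla F^{(\kappa+1)}$ against $\nabla^{\kappa+2}(v+\tau w)$ in $L^2$. The latter quantity is \emph{not} controlled by $\mathbf{D}^{(\kappa)}$, but it does appear inside $\mathbf{E}^{(\kappa)}$ through the $H^1$ contribution $\|\nabla^{\kappa+1}(v+\tau w)\|_{H^1}^2$. My strategy is to use Young's inequality to separate $\|\nabla F^{(\kappa+1)}\|_{L^2}$, bounded as above by $\mathdutchcal{M}[v,w,\vecc{U}]$ times a dissipative norm, from $\|\nabla^{\kappa+2}(v+\tau w)\|_{L^2}^2\lesssim\mathbf{E}^{(\kappa)}(\sigma)$, and to split the weight as $(1+\sigma)^{j-1/2}=(1+\sigma)^{(j-3/2)/2+(j+1/2)/2}$ so that the surviving weight on the $\mathbf{E}^{(\kappa)}$ piece is $(1+\sigma)^{j-3/2}$, fitting exactly into the first half of $\Upsilon^{(j,\kappa)}(t)$. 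This closes the regularity-loss balance and clarifies why the weighted norm \eqref{Energy_Main} needs the negative exponent $i-1/2$.

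Finally, to make sure no derivative count exceeds the available regularity $s$, I would verify term by term that the highest-order $L^2$ norms appearing in the bounds on $\|F^{(\kappa)}\|_{L^2}$ and $\|\nabla F^{(\kappa+1)}\|_{L^2}$ stay within the total budget, using $\kappa\leq s-1$ and the background assumption $s\geq[3n/2]+5$. The Sobolev embedding needed to identify the $L^\infty$ decay factors with $\mathdutchcal{M}[v,w,\vecc{U}]$ via Lemma~\ref{M_estimate} is available precisely when $s_0\geq[n/2]+2$, which is the hypothesis of the theorem. Assembling the six pieces then yields \eqref{R_j_Estimates_k}.
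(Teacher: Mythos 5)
Your overall skeleton---expand $\mathbf{R}_\kappa$, apply Kato--Ponce and Moser estimates, extract $L^\infty$ decay via Lemma~\ref{M_estimate}, and track the derivative shed by the commutator into $\mathbf{D}^{(\kappa-1)}$---matches the paper's Steps I--III. But your treatment of the regularity-loss term $R^{(2)}_{\kappa+1}(\nabla^{\kappa+1}(v+\tau w))$ breaks down: the Young-plus-weight-split move destroys the multiplicative $\mathdutchcal{M}$ structure that \eqref{R_j_Estimates_k} demands. After Young and your split you obtain
\begin{equation*}
(1+\sigma)^{j-1/2}\|\nabla F^{(\kappa+1)}\|_{L^2}\|\nabla^{\kappa+2}(v+\tau w)\|_{L^2}
\leq \tfrac12(1+\sigma)^{j+1/2}\|\nabla F^{(\kappa+1)}\|_{L^2}^2 + \tfrac12(1+\sigma)^{j-3/2}\|\nabla^{\kappa+2}(v+\tau w)\|_{L^2}^2.
\end{equation*}
The second summand integrates to $\int_0^t(1+\sigma)^{j-3/2}\mathbf{E}^{(\kappa)}\,\textup{d}\sigma$---indeed the $\mathbf{E}$-half of $\Upsilon^{(j,\kappa)}(t)$---but it carries \emph{no} factor of $\mathdutchcal{M}[v,w,\vecc{U}](t)$, while the first summand carries $\mathdutchcal{M}^2$. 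The best you get is $\Upsilon^{(j,\kappa)}(t)+\mathdutchcal{M}^2\Upsilon^{(j,\kappa)}(t)$, not $\mathdutchcal{M}\Upsilon^{(j,\kappa)}(t)$. The orphaned $\Upsilon^{(j,\kappa)}(t)$ is fatal downstream: in Theorem~\ref{Thm_Main_1_repeat} (see~\eqref{Order_zero_Estimate} and~\eqref{Main_Estimate_s}) the entire $\mathbf{R}_\kappa$ contribution must be absorbed into the left-hand side by smallness of $\mathdutchcal{M}$, and a term without the $\mathdutchcal{M}$ prefactor cannot be.

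The repair is the paper's: do not separate the two $L^2$ factors at all. Both $\|\nabla F^{(\kappa+1)}\|_{L^2}$ and $\|\nabla^{\kappa+2}(v+\tau w)\|_{L^2}$ are $\lesssim\sqrt{\mathbf{E}^{(\kappa)}}$, and the $L^\infty$ factors in $\nabla F^{(\kappa+1)}$ supply $(M_0[\vecc{U}]+M_0[v])(1+\sigma)^{-n/2}$ (eq.~\eqref{Nabla_R_1_k+1}), so the product is $\lesssim\mathdutchcal{M}(1+\sigma)^{-n/2}\mathbf{E}^{(\kappa)}(\sigma)$. Since $n\geq 3$ gives $j-1/2-n/2\leq j-3/2$, multiplying by $(1+\sigma)^{j-1/2}$ and integrating lands directly in $\mathdutchcal{M}\int_0^t(1+\sigma)^{j-3/2}\mathbf{E}^{(\kappa)}\,\textup{d}\sigma\subset\mathdutchcal{M}\Upsilon^{(j,\kappa)}(t)$; the decay of the $L^\infty$ factor alone performs the weight downgrade, and Young's inequality is unnecessary. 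A secondary inaccuracy: you claim $\|\nabla F^{(\kappa+1)}\|_{L^2}$ is bounded by $\mathdutchcal{M}$ times a \emph{dissipative} norm, but $\nabla F^{(\kappa+1)}$ contains derivatives up to order $\kappa+3$ on $\psi,v$ and $\kappa+2$ on $w$, which live in $\mathbf{E}^{(\kappa)}$ but exceed what $\mathbf{D}^{(\kappa)}$ controls; this excess \emph{is} the regularity loss and is exactly why the $\mathbf{E}^{(\kappa)}$-half of $\Upsilon^{(j,\kappa)}$ must be present.
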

We provide the proof through several steps, in which we estimate the terms contained within $\int_0^t (1+\sigma)^{j-1/2}\mathbf{R}_\kappa(\sigma)\, \textup{d}\sigma$ under the assumptions of Theorem~\ref{Thm_R_k}.
\newtheorem*{prop1}{Step I}
\begin{prop1}\label{Lemma_R_1_k}
The following estimate holds:
\begin{equation}
\begin{aligned}  \label{R_1_Estimate_k}
&\int_0^t (1+\sigma)^{j-1/2}| R_\kappa^{(1)}\nabla^\kappa(v + \tau w)(\sigma)|\, \textup{d}\sigma+\int_0^t (1+\sigma)^{j-1/2}| R_\kappa^{(1)}\nabla^\kappa(\tau w)(\sigma)|\, \textup{d}\sigma \\ 
\lesssim& \,  \mathdutchcal{M}[v, w, \vecc{U}](t)\Upsilon^{(j,\kappa)}(t).
\end{aligned}   
\end{equation}  
\end{prop1}
\begin{proof}
We first note that the term $R_\kappa^{(1)}$ can be written out as
\begin{equation}
\begin{aligned}
& R_\kappa^{(1)}(\nabla^{\kappa}(v+\tau w))\\
=&\,  (F^{(\kappa)}, \nabla^{\kappa}(v+\tau w))_{L^2} \\
=&\,  (2k[\nabla^\kappa,v]w+2k v\tilde{w}+2\kappa[\nabla^\kappa,\nabla \psi] \cdot \nabla v+2k\nabla \psi \cdot \nabla\tilde{w}, \ \tilde{v}+\tau \tilde{w})_{L^2},
\end{aligned}
\end{equation}
where we have used the short-hand notation $\tilde{v}=\nabla^\kappa v$ and $\tilde{w}=\nabla^\kappa w$, and the definition \eqref{F_k_Form} of the functional $F^{(\kappa)}$. Thus we can estimate this term as follows:
\begin{equation}\label{I_1_Terms_New}
\begin{aligned}
& \left| R_\kappa^{(1)}(\nabla^{\kappa}(v+\tau w)) \right|\\
 \lesssim&\, \begin{multlined}[t]\int_{\mathbb{R}^n} |[\nabla ^{k},v]w| | \tilde{v}+\tau \tilde{w}| \dx+\int_{%
\mathbb{R}^n} |v\tilde{w}| |\left( \tilde{v}+\tau \tilde{w}\right)|\dx \\
+\int_{\mathbb{R}^n}|[\nabla ^{\kappa},\nabla \psi]\nabla v||\tilde{v}+\tau
\tilde{w}|\dx + \int_{\mathbb{R}^n} |\nabla \psi||\nabla \tilde{v}|| \tilde{v}+\tau \tilde{w}|\dx\end{multlined}\\
 =: &\,\textup{I}_1+\textup{I}_2+\textup{I}_3+\textup{I}_4.
\end{aligned}
\end{equation}
We need to further bound each of the terms on the right. Starting from the last term, we have
\begin{equation}  \label{T_4}
\begin{aligned}
\mathrm{I}_4=  \int_{\mathbb{R}^n} |\nabla \psi||\nabla \tilde{v}|| \tilde{v}+\tau \tilde{w}|\dx  \leq&\, \int_{\mathbb{R}^n} |\tilde{v}||\nabla \psi||\nabla \tilde{v }|\dx+\int_{\mathbb{R}%
^n} \tau|\nabla \psi||\nabla \tilde{v}|| \tilde{w}| \dx \\
:=&\, \mathrm{I}_{4 \textup{a}}+ \mathrm{I}_{4 \textup{b}}.
\end{aligned}
\end{equation}
We can rely on the following bound:
\begin{equation}
 \mathrm{I}_{4 \textup{b}}  \lesssim \Vert \nabla \psi\Vert_{L^\infty}\Vert \nabla
\tilde{v}\Vert_{L^2}\Vert \tilde{w}\Vert_{L^2}
\end{equation}
to infer
\begin{equation} 
\begin{aligned}
&\int_0^t (1+\sigma)^{j-1/2} \mathrm{I}_{4b} \, \textup{d}\sigma\\
\lesssim&\, \int_0^t (1+\sigma)^{j-1/2}\Vert \nabla \psi\Vert_{L^\infty}\Vert \nabla
\tilde{v}\Vert_{L^2}\Vert \tilde{w}\Vert_{L^2} \, \textup{d}\sigma\\
\lesssim&\, \begin{multlined}[t]\sup_{0\leq \sigma\leq  t}\left( 1+\sigma \right) ^{%
	\frac{n}{2}}\left\Vert  \vecc{U}(\sigma)\right\Vert _{L^{\infty }} \int_0^t (1+\sigma)^{j-1/2-n/2}(\Vert \nabla \tilde{v}(\sigma)\Vert_{L^2}^2+ \Vert \tilde{w}(\sigma)\Vert_{L^2}^2)\, \textup{d}\sigma, \end{multlined}
\end{aligned}
\end{equation}
where we recall that $\vecc{U}=(v+\tau w, \nabla (\psi+\tau v), \nabla v)^T$. We note that the integral over time on the right can be estimated as follows:
\begin{equation}
\begin{aligned}
& \int_0^t (1+\sigma)^{j-1/2-n/2}(\Vert \nabla^{\kappa+1}v(\sigma)\Vert_{L^2}^2+ \Vert \nabla^{\kappa}w(\sigma)\Vert_{L^2}^2)\, \textup{d}\sigma \\
\lesssim&\,\Upsilon^{(j,\kappa)}(t)=\int_0^t (1+  \sigma)^{j-3/2}\mathbf{E}^{(\kappa)}(\sigma) \, \textup{d}\sigma+\int_0^t (1+\sigma)^{j-1/2}\mathbf{D}^{(\kappa)}(\sigma)\, \textup{d}\sigma,
\end{aligned}
\end{equation}
because $n/2+1/2\geq 3/2$. Therefore,
\begin{equation} \label{J_2_k_Main_New}
\begin{aligned}
\int_0^t (1+\sigma)^{j-1/2} \mathrm{I}_{4b} \, \textup{d}\sigma
\lesssim\, M_0[\vecc{U}](t)\Upsilon^{(j,\kappa)}(t).   
\end{aligned}
\end{equation}
\indent Next we  have 
\begin{equation}
\begin{aligned}
\mathrm{I}_{4 \textup{a}} \lesssim  \Vert \tilde{v}\Vert_{L^\frac{2n}{n-2}}\Vert \nabla \psi\Vert_{L^n}\Vert \nabla
\tilde{v}\Vert_{L^{2}}
\lesssim\, \Vert \nabla
\tilde{v}\Vert_{L^{2}}^2 \Vert \nabla \psi\Vert_{\dot{H}^{\frac{n-2}{2}}},
\end{aligned}
\end{equation}
where we have used the endpoint Sobolev embeddings 
\[
\begin{aligned}
\|\tilde{v}\|_{L^{\frac{2n}{n-2}}} \lesssim\,  \|\nabla \tilde{v}\|_{L^2}, \qquad
\Vert \nabla \psi\Vert_{L^n} \lesssim\,  \Vert \nabla \psi\Vert_{\dot{H}^{\frac{n-2}{2}}}, \qquad n \geq 3;
\end{aligned}
\]
cf. Lemma~\ref{Lemma:EndpointEmbedding}. Since $s_0\geq [\frac{n}{2}]+1 $, we have
  \begin{equation}\label{Psi_M_Estimate}
 \Vert \nabla \psi (\sigma)\Vert_{\dot{H}^{\frac{n-2}{2}}} \lesssim  \Vert \nabla \psi (\sigma)\Vert_{\dot{H}^{[\frac{n}{2}]}}\lesssim (1+\sigma)^{-n/4}\mathdutchcal{M}[v, w, \vecc{U}](t);
 \end{equation}
cf. \eqref{Def_Mcal}. Therefore,
 \begin{equation}\label{J_1_k_1_New}
 \begin{aligned}
\int_0^t (1+\sigma)^{j-1/2} \mathrm{I}_{4a}\, \textup{d}\sigma 
\lesssim& \, \begin{multlined}[t]\mathdutchcal{M}[v, w, \vecc{U}](t)\int_0^t (1+\sigma)^{j-1/2-n/4} \Vert \nabla^{\kappa+1}v(\sigma)\Vert_{L^2}^2 \, \textup{d}\sigma \end{multlined}\\
\lesssim&\, \mathdutchcal{M}[v, w, \vecc{U}](t)\Upsilon^{(j,\kappa)}(t).  
\end{aligned}  
\end{equation}
Consequently, we deduce from \eqref{J_2_k_Main_New} and \eqref{J_1_k_1_New} that 
\begin{equation}\label{T_4_Estimate_New}
\int_0^t (1+\sigma)^{j-1/2} I_4 \, \textup{d}\sigma\lesssim \mathdutchcal{M}[v, w, \vecc{U}](t)\Upsilon^{(j,\kappa)}(t) .
\end{equation}
\indent We wish to estimate term $\textup{I}_2$ within \eqref{I_1_Terms_New} next. By relying again on H\"older's inequality and the endpoint Sobolev embeddings, we find
\begin{equation}
\begin{aligned}
\label{T_2_Terms_New}
\mathrm{I}_2 =\int_{%
	\mathbb{R}^n} |v\tilde{w}| |\left( \tilde{v}+\tau \tilde{w}\right)|\dx 
\lesssim&\, \Vert v\Vert_{\dot{H}^{\frac{n-2}{2}}}\Vert \nabla \tilde{v}\Vert_{L^2}\Vert  \tilde{w}\Vert_{L^2}+\Vert v\Vert_{L^\infty}\Vert 
\tilde{w}\Vert_{L^{2}}^2.
\end{aligned}
\end{equation}  
Therefore, similarly to before we arrive at
\begin{equation}\label{T_2_Main_N_1_New}
\begin{aligned}
\int_0^t(1+\sigma)^{j-1/2}\mathrm{I}_2\, \textup{d}\sigma 
\lesssim&\,\begin{multlined}[t] \mathdutchcal{M}[v,w, \vecc{U}](t)\Upsilon^{(j,\kappa)}(t) \\ 
+\sup_{0\leq \sigma\leq t}\left( 1+\sigma \right) ^{%
	\frac{n}{2}}\left\Vert v(\sigma)\right\Vert _{L^{\infty }} \int_0^t (1+\sigma)^{j-1/2-n/2}   \int_{\mathbb{R}^{n}}|\tilde{w}(\sigma)|^{2}\dx \textup{d}\sigma
  \end{multlined}\\
\lesssim&\, (M_0[v](t)+
\mathdutchcal{M}[v,w, \vecc{U}](t))\Upsilon^{(j,\kappa)}(t).
\end{aligned}
\end{equation}
Since $s_0 \geq [n/2]+2$ by our assumption, we can rely on  Lemma \ref{M_estimate}, which yields $M_0[v](t)\lesssim \mathdutchcal{M}[v,w, \vecc{U}](t)$ and leads to
 \begin{equation}\label{T_2_Main_N_1_New}
 \begin{aligned}
 \int_0^t(1+\sigma)^{j-1/2}\mathrm{I}_2\, \textup{d}\sigma 
 \lesssim\,  \mathdutchcal{M}[v,w, \vecc{U}](t))\Upsilon^{(j,\kappa)}(t).
 \end{aligned}
 \end{equation}
\indent We continue with estimating the right-hand side terms in \eqref{I_1_Terms_New}. We have
\begin{equation}
\begin{aligned}
\label{T_1_Com_N_2_New}
\mathrm{I}_1 \lesssim&\,
\left\Vert \lbrack \nabla ^{\kappa},v]w\right\Vert _{L^{\frac{2n}{n+2}}}\left\Vert 
\tilde{v}+\tau \tilde{w} \right\Vert _{L^{\frac{2n}{n-2}}}
\lesssim\, \left\Vert \lbrack \nabla ^{\kappa},v]w\right\Vert _{L^{\frac{2n}{n+2}}}  \left\Vert \nabla\left(
\tilde{v}+\tau \tilde{w}\right) \right\Vert _{L^{2}}.
\end{aligned}
\end{equation}
The last term on the right can be further bounded by using the standard commutator estimate as follows:
\begin{equation}
\begin{aligned}
\label{Com_1_N_2_New}
\left\Vert \lbrack \nabla ^{\kappa},v]w\right\Vert _{L^{\frac{2n}{n+2}}}
=&\,  \Vert \nabla^\kappa(v w)-v \nabla^\kappa w\Vert_{L^{\frac{2n}{n+2}}} \\
\lesssim& \,\Vert w\Vert _{L^{n}}\Vert
\nabla ^{\kappa}v\Vert _{L^{2}}+\Vert \nabla
v\Vert _{L^{n}}\Vert \nabla ^{\kappa-1}w\Vert _{L^{2}}\\
\lesssim& \,  (\Vert
\nabla ^{\kappa}v\Vert _{L^{2}}+\Vert \nabla ^{\kappa-1}w\Vert _{L^{2}})(\Vert \nabla v\Vert_{\dot{H}^{\frac{n-2}{2}}} +\Vert w\Vert_{\dot{H}^{\frac{n-2}{2}}} );
\end{aligned}
\end{equation}
cf. Lemma~\ref{Guass_symbol_lemma}. Therefore, we obtain
\begin{equation}
\begin{aligned}
\label{T_1_Main_Estimate_N_2}
&\int_{0}^{t}(1+\sigma)^{j-1/2}\mathrm{I}_1\, \textup{d}\sigma\\
 \lesssim &\, \mathdutchcal{M}[v,w, \vecc{U}](t) \int_{0}^{t}(1+\sigma)^{j-1/2-n/4}(\sigma ) (\Vert \nabla ^{\kappa-1}w(\sigma )\Vert _{L^{2}}+\Vert
\nabla ^{\kappa}v(\sigma )\Vert _{L^{2}})\, \textup{d}\sigma\\
\lesssim &\,\mathdutchcal{M}[v,w, \vecc{U}](t) \Upsilon^{(j,\kappa)}(t).
\end{aligned}
\end{equation}
In the above estimate, we have used that, for $s_0\geq 1+ [n/2]$,
\begin{equation}
\begin{aligned}
 \Vert w(\sigma)\Vert_{\dot{H}^{\frac{n-2}{2}}}\lesssim \Vert w(\sigma)\Vert_{\dot{H}^{[\frac{n}{2}]}} \lesssim&\, \Vert w (\sigma)\Vert_{L^2}+\sum_{j=1}^{[\frac{n}{2}]} \Vert \nabla^j w\Vert_{L^2},\\
\lesssim&\, (1+\sigma)^{-\frac{n}{4}}\mathdutchcal{M}[v,w, \vecc{U}](t)
\end{aligned}
\end{equation}
and, similarly,
\begin{equation}
\Vert \nabla v(\sigma)\Vert_{\dot{H}^{\frac{n-2}{2}}}\lesssim  (1+\sigma)^{-\frac{n}{4}}\mathdutchcal{M}[v,w, \vecc{U}](t). 
\end{equation}
Finally, to estimate $\textup{I}_3$, we have as in \eqref{T_1_Com_N_2_New}, 
 \begin{equation}\label{T_3_N_2_5}
 \begin{aligned}
\mathrm{I}_3 \lesssim&\, \Vert \lbrack \nabla ^{\kappa},\nabla \psi]\nabla v\Vert _{L^{\frac{2n}{n+2}}}\left\Vert \left(
\tilde{v}+\tau \tilde{w}\right) \right\Vert _{L^{\frac{2n}{n-2}}}\\
\lesssim& \,\left\Vert \lbrack \nabla ^{\kappa},\nabla \psi]\nabla v\right\Vert _{L^{\frac{2n}{n+2}}} \left\Vert \nabla\left(
\tilde{v}+\tau \tilde{w}\right) \right\Vert _{L^{2}}.
\end{aligned}
\end{equation}
Applying again  the commutator estimate and the endpoint Sobolev embeddings  yields 
\begin{equation}
\begin{aligned}
\label{Estimate_N_2_T_3_New}
\left\Vert \lbrack \nabla ^{\kappa},\nabla \psi]\nabla v\right\Vert _{L^{\frac{2n}{n+2}}}\lesssim & \, (\Vert \nabla^2
\psi\Vert _{L^{n}}\Vert \nabla ^{\kappa}v\Vert _{L^{2}}+\Vert \nabla v\Vert _{L^{n}}\Vert
\nabla ^{\kappa+1}\psi\Vert _{L^{2}})\\
 \lesssim & \,  (\Vert \nabla^2
\psi\Vert _{\dot{H}^{[\frac{n}{2}]}}+\Vert \nabla v\Vert  _{\dot{H}^{[\frac{n}{2}]}})(\Vert \nabla ^{\kappa}v\Vert _{L^{2}}+\Vert
\nabla ^{\kappa+1}\psi\Vert _{L^{2}});
\end{aligned}
\end{equation}
cf.  Lemma~\ref{Guass_symbol_lemma}. At this point we can also employ the estimate
\begin{equation}
\begin{aligned}
\Vert \nabla^2 \psi\Vert _{\dot{H}^{[\frac{n}{2}]}}\lesssim &\, \| \nabla \vecc{U} \|_{\dot{H}^{[\frac{n}{2}]}} \lesssim \displaystyle \sum_{j=0}^{[\frac{n}{2}]+1} \|\nabla^j \vecc{U}\|_{L^2},
\end{aligned}
\end{equation}
from which it follows that
\begin{equation}\label{Ineq_Discussion}
\begin{aligned}
(1+\sigma)^{n/4+  1/2}
 \Vert \nabla^2 \psi\Vert _{\dot{H}^{[\frac{n}{2}]}}
\lesssim &\,\displaystyle \sum_{j=0}^{[\frac{n}{2}]+1}\sup_{0 \leq \sigma \leq t}(1+\sigma)^{n/4+ \frac{j}{2}} \| \nabla^{j} \vecc{U} \|_{L^2}\\
\lesssim&\, \mathdutchcal{M}[v,w, \vecc{U}](t)
\end{aligned}
\end{equation}
for every $\sigma \in [0, t]$ Then by using $ (1+t)^{n/4+1/2} \leq \displaystyle \sup_{0\leq \sigma\leq t}(1+\sigma)^{n/4+1/2}$,
 we obtain
\begin{equation}
\begin{aligned}
 \Vert \nabla^2 \psi\Vert _{\dot{H}^{[\frac{n}{2}]}}\lesssim &\, (1+t)^{-n/4-1/2} \mathdutchcal{M}[v,w, \vecc{U}](t),
\end{aligned}   
\end{equation}
provided that $s_0 \geq [n/2]+1$ in the definition of $\mathdutchcal{M}[v,w, \vecc{U}](t)$. Similarly, we have    
\begin{equation}
\Vert \nabla v\Vert  _{\dot{H}^{[\frac{n}{2}]}}\lesssim (1+t)^{-n/4-1/2}\mathdutchcal{M}[v,w, \vecc{U}](t),
\end{equation} 
The derived estimates further yield
 \begin{equation}   
 \begin{aligned}
 \label{T_3_Estimate_N_2}
&\int_0^t (1+\sigma)^{j-1/2}\mathrm{I}_3 \, \textup{d}\sigma \\
\lesssim &\,\mathdutchcal{M}[v,w, \vecc{U}](t)\int_0^t (1+\sigma)^{j-1-n/4} (\Vert \nabla ^{\kappa}v\Vert _{L^{2}}+\Vert
\nabla ^{\kappa+1}\psi\Vert _{L^{2}})\left\Vert \nabla^{\kappa+1}\left(
v+\tau w\right) \right\Vert _{L^{2}}\\
\lesssim&\,
\mathdutchcal{M}[v,w, \vecc{U}](t)\int_0^t (1+\sigma)^{j-1-n/4} (\Vert \nabla ^{\kappa}v\Vert _{L^{2}}^2+\Vert
\nabla ^{\kappa+1}\psi\Vert _{L^{2}}^2+\left\Vert \nabla^{\kappa+1}\left(
v+\tau w\right) \right\Vert _{L^{2}}^2). 
\end{aligned}
\end{equation}
We proceed to bound the three terms on the right-hand side above. Since $n \geq 3$, we have
\begin{equation}
\begin{aligned}
\int_0^t (1+\sigma)^{j-1-n/4} \Vert \nabla ^{\kappa}v\Vert _{L^{2}}^2 \, \textup{d}\sigma\lesssim&\, \int_0^t (1+\sigma)^{j-3/2}\Vert \nabla ^{\kappa}v\Vert _{L^{2}}^2 \, \textup{d}\sigma\\
\lesssim&\, \int_0^t (1+  \sigma)^{j-3/2}\mathbf{E}^{(\kappa)}(\sigma)\, \textup{d}\sigma
\lesssim\,\Upsilon^{(j,\kappa)}(t).
\end{aligned}
\end{equation}
The second term containing $\Vert
\nabla ^{\kappa+1}\psi\Vert _{L^{2}}^2$ can be estimated similarly. For the third term, we have 
\begin{equation}
\begin{aligned}
\int_0^t (1+\sigma)^{j-1/2-n/4-1/2} \left\Vert \nabla^{\kappa+1}\left(
	v+\tau w\right) \right\Vert _{L^{2}}^2\, \textup{d}\sigma
\lesssim&\, \int_0^t (1+  \sigma)^{j-1/2}\mathbf{D}^{(\kappa)}(\sigma)\, \textup{d}\sigma\\
\lesssim&\,\Upsilon^{(j,\kappa)}(t),
\end{aligned}
\end{equation}
which proves 
\[
\int_0^t (1+\sigma)^{j-1/2}I_3 \, \textup{d}\sigma \lesssim \,\mathdutchcal{M}[v,w, \vecc{U}](t) \Upsilon^{(j,\kappa)}(t).
\]
Finally, our estimates combined yield
\begin{equation}
\begin{aligned}  \label{R_1_Estimate_k_1}
\int_0^t (1+\sigma)^{j-1/2}| R_\kappa^{(1)}\nabla^\kappa(v + \tau w)(\sigma)|\, \textup{d}\sigma
\lesssim \,
\mathdutchcal{M}[v,w, \vecc{U}](t)\Upsilon^{(j,\kappa)}(t).
\end{aligned}
\end{equation}
The estimate of $\int_0^t (1+\sigma)^{j-1/2}| R_\kappa^{(1)}\nabla^\kappa(\tau w)(\sigma)|\, \textup{d}\sigma$ can be derived analogously. We omit the details here.
\end{proof}
\newtheorem*{prop2}{Step II}
 \begin{prop2}\label{lemma_k_R_2}
Under the assumptions of Theorem~\ref{Thm_R_k}, it holds
 \begin{equation}\label{R_2_kappa_New}
 \begin{aligned}
&\int_0^t (1+\sigma)^{j-1/2}\left(|R_\kappa^{(2)}(\nabla^\kappa(v + \tau w)(\sigma)|+|R_\kappa^{(2)}(\tau \nabla^\kappa v)(\sigma)|\right)\, \textup{d}\sigma\\  
\lesssim& \,\mathdutchcal{M}[v,w, \vecc{U}](t) \Upsilon^{(j,\kappa)}(t) . 
\end{aligned}
\end{equation}
Furthermore,
\begin{equation}  \label{R_2_kappa_New_}
\begin{aligned}
\int_0^t (1+\sigma)^{j-1/2}|R_{\kappa+1}^{(2)}(\nabla^{\kappa+1}(v + \tau w)(\sigma)|\, \textup{d}\sigma
\lesssim\,\mathdutchcal{M}[v,w, \vecc{U}](t) \Upsilon^{(j,\kappa)}(t).   
\end{aligned}
\end{equation}
 \end{prop2}   
 \begin{proof}
We present first the proof of estimate \eqref{R_2_kappa_New}. We have by the Cauchy--Schwarz inequality 
 \begin{equation}  \label{R_2_R_4}
 \begin{aligned}
&|R_\kappa^{(2)}(\nabla^\kappa(v+\tau w))|(\sigma)+|R_\kappa^{(2)}(\tau \nabla^\kappa v)| \\
\lesssim&\, \Vert \nabla F^{(\kappa)}\Vert _{L^{2}}(%
\Vert\nabla\left( \tilde{v}+\tau \tilde{w}\right)\Vert_{L^2}+\Vert\nabla \tilde{v}\Vert_{L^2}).
\end{aligned}
\end{equation}
Keeping in mind that
\begin{equation}
F^{(\kappa)}(\psi,v,\nabla \psi,\nabla v)=2k[\nabla^\kappa,v]w+2k v\tilde{w}+2\kappa[\nabla^\kappa,\nabla \psi] \cdot \nabla v+2k\nabla \psi \cdot \nabla\tilde{w},
\end{equation}
 applying the commutator estimate \eqref{First_inequaliy_Guass} yields
\begin{equation}  \label{R_1_k_Nabla} 
\begin{aligned}
\Vert \nabla F^{(\kappa)}\Vert _{L^{2}} \lesssim&\,\begin{multlined}[t]\Vert w\Vert
_{L^{\infty }}\Vert \nabla ^{\kappa+1}v\Vert _{L^{2}}+\Vert v\Vert _{L^{\infty
}}\Vert \nabla ^{\kappa+1}w\Vert _{L^{2}}\\
+ \Vert \nabla \psi\Vert _{L^{\infty }}\Vert \nabla ^{\kappa+2}v\Vert
_{L^{2}}+\Vert \nabla v\Vert _{L^{\infty }}\Vert \nabla ^{\kappa+2}\psi\Vert
_{L^{2}}.\end{multlined}
\end{aligned}
\end{equation}%
The last two terms can be further estimated as follows:
\begin{equation}
\begin{aligned}
&\Vert \nabla \psi\Vert _{L^{\infty }}\Vert \nabla ^{\kappa+2}v\Vert _{L^{2}}+\Vert
\nabla v\Vert _{L^{\infty }}\Vert \nabla ^{\kappa+2}\psi\Vert _{L^{2}} \\
\lesssim&\, \Vert \nabla \psi \Vert _{L^{\infty }}\Vert \Delta \nabla
^{\kappa}v\Vert _{L^{2}}+\Vert \nabla v\Vert _{L^{\infty }}\Vert \Delta \nabla
^{\kappa}\psi\Vert _{L^{2}} \\
\lesssim&\, \Vert \nabla \psi\Vert _{L^{\infty }}\Vert \Delta \nabla
^{\kappa}v\Vert _{L^{2}}+\Vert \nabla v\Vert _{L^{\infty }}\left( \Vert \Delta
\nabla ^{\kappa}\left( \psi+\tau v\right) \Vert _{L^{2}}+\Vert \Delta \nabla
^{\kappa}v\Vert _{L^{2}}\right) .
\end{aligned}
\end{equation}%
By inserting the above estimates into \eqref{R_1_k_Nabla} and using
\begin{equation}  
\begin{aligned}
\Vert w\Vert_{L^\infty}\lesssim\, \Vert v\Vert_{L^\infty}+\Vert v+\tau w\Vert_{L^\infty}
\lesssim\, (M_0[v](t)+M_0[\vecc{U}](t))(1+t)^{-n/2},
\end{aligned}  
\end{equation}
as well as 
\begin{equation}
\begin{aligned}
\Vert \nabla \psi\Vert _{L^{\infty }}+\Vert \nabla v\Vert _{L^{\infty }}\lesssim M_0[\vecc{U}](t) (1+t)^{-n/2}, 
\end{aligned}
\end{equation}
 we infer \begin{equation}
 \begin{aligned}
   \label{Nabla_R_1_k}
\Vert \nabla F^{(\kappa)}\Vert _{L^{2}}\lesssim &\, \begin{multlined}[t] (M_0[\vecc{U}](t)+M_0[v](t))(1+t)^{-n/2}\\
\times ( \Vert \nabla
\tilde{v}\Vert _{L^{2}}+\Vert \nabla \tilde{w}\Vert _{L^{2}}+\Vert \Delta \tilde{v}\Vert
_{L^{2}}+\Vert \Delta ( \tilde{\psi}+\tau \tilde{v}) \Vert _{L^{2}}). \end{multlined}
\end{aligned}
\end{equation}
Noting that $s_0 \geq [n/2]+2$, then by virtue of the above estimates and Lemma \ref{M_estimate},  estimate \eqref{R_2_kappa_New} also holds. We can proceed similarly to obtain 
\begin{equation}
\begin{aligned}
|R_{\kappa+1}^{(2)}(\nabla^{\kappa+1}(v + \tau w)(\sigma)|\lesssim\Vert \nabla F^{(\kappa+1)}\Vert _{L^{2}} \Vert\nabla^{\kappa+1}\left( v+\tau w\right)\Vert_{L^2}. 
\end{aligned}
\end{equation}
Analogously to \eqref{Nabla_R_1_k}, we have 
\begin{equation}
\begin{aligned}
   \label{Nabla_R_1_k+1}
\Vert \nabla F^{(\kappa+1)}\Vert _{L^{2}}\lesssim &\,  (M_0[\vecc{U}](t)+M_0[v](t))(1+t)^{-n/2}\\
&\times ( \Vert \nabla^{\kappa+2}
v\Vert _{L^{2}}+\Vert \nabla^{\kappa+2} w\Vert _{L^{2}}+\Vert \Delta \nabla^{\kappa+1}v\Vert
_{L^{2}}+\Vert \Delta \nabla^{\kappa+1}( \psi+\tau v) \Vert _{L^{2}}).
\end{aligned}
\end{equation}
Therefore, using the above estimate and keeping in mind how $\Upsilon^{j,\kappa}$ is defined, estimate \eqref{R_2_kappa_New_} follows; cf. \eqref{D_k_dissipation}.    
\end{proof}

\newtheorem*{prop3}{Step III}
  \begin{prop3}\label{Lemma_R_2_psi}
Under the assumptions of Theorem~\ref{Thm_R_k}, it holds
  \begin{equation}\label{Equation_Main_R_k}
\begin{aligned}
&\int_0^t (1+\sigma)^{j-1/2} |R_\kappa^{(2)}(\nabla^{\kappa}(\psi+\tau v)(\sigma))|\, \textup{d}\sigma\\
\lesssim&\, \mathdutchcal{M}[v,w, \vecc{U}](t) \left(\Upsilon^{(j,\kappa)}(t)+\int_0^t (1+\sigma)^{j-3/2}\mathbf{D}^{\kappa-1}(\sigma)\textup{d}\sigma\right).
\end{aligned}  
\end{equation}
  \end{prop3}
  \begin{proof}
We observe that
\begin{equation}
\begin{aligned}
|R_\kappa^{(2)}(\nabla^{\kappa}(\psi+\tau v))|= \, |(\nabla F^{(\kappa)}, \nabla (\nabla^{\kappa}(\psi+\tau v)))_{L^2}|
=&\, |(F^{(\kappa)}, \Delta \nabla^{\kappa}(\psi+\tau v))_{L^2}| \\
\leq&\, \Vert F^{(\kappa)}\Vert _{L^{2}} \Vert \Delta   ( \tilde{\psi}+\tau
\tilde{v}) \Vert_{L^2}. 
\end{aligned}
\end{equation}
Keeping in mind how $F^{(\kappa)}$ is defined in \eqref{F_k_Form}, we can estimate its $L^2$ norm as follows: 
\begin{equation} \label{R_1_k_0_1}
\begin{aligned}
\Vert F^{(\kappa)}\Vert _{L^{2}} \lesssim&\,  \begin{multlined}[t]\Vert \lbrack \nabla
^{k},v]w\Vert _{L^{2}}+\Vert v\Vert _{L^{\infty }}\Vert \tilde{w}\Vert
_{L^{2}} \\
+ \Vert \lbrack \nabla ^{\kappa},\nabla \psi]\nabla v\Vert _{L^{2}}+\Vert
\nabla \psi\Vert _{L^{\infty }}\Vert \nabla \tilde{v}\Vert _{L^{2}}. \end{multlined}
\end{aligned}
\end{equation}
By applying the commutator estimate, we deduce
\begin{equation}\label{w_k-1_Estimate}
\Vert \lbrack \nabla ^{\kappa},v]w\Vert _{L^{2}}\lesssim \Vert%
\nabla v\Vert _{L^{\infty }}\Vert \nabla ^{\kappa-1}w\Vert _{L^{2}}+\Vert w\Vert
_{L^{\infty }}\Vert \nabla ^{\kappa}v\Vert _{L^{2}};  
\end{equation}
cf. Lemma~\ref{Guass_symbol_lemma}. This further yields 
 \begin{equation}\label{First_Estimate_Com}
\begin{aligned}
 &\Vert\nabla v\Vert _{L^{\infty }}\Vert \nabla ^{\kappa-1}w\Vert _{L^{2}}\Vert \Delta   ( \tilde{\psi}+\tau
\tilde{v}) \Vert_{L^2}\\
\lesssim &\, M_0[\vecc{U}](t)(1+t)^{-n/2}
  \left(\Vert \nabla ^{\kappa-1}w\Vert _{L^{2}}^2+\Vert \Delta   \nabla^\kappa( \psi+\tau  
v) \Vert_{L^2}^2\right). 
\end{aligned}  
\end{equation}     
For the first term on the right, since $n\geq 3$, we have 
\begin{equation}
\begin{aligned}
&\int_0^t (1+\sigma)^{j-1/2}M_0[\vecc{U}](t)(1+t)^{-n/2}
  \Vert \nabla ^{\kappa-1}w\Vert _{L^{2}}^2\\
\lesssim &\, M_0[\vecc{U}](t)\int_0^t (1+\sigma)^{j-1-1/2}\Vert \nabla ^{\kappa-1}w\Vert _{L^{2}}^2\textup{d}\sigma\\
\lesssim &\, M_0[\vecc{U}](t)\int_0^t (1+\sigma)^{j-1-1/2}\mathbf{D}^{\kappa-1}(\sigma)\textup{d}\sigma. 
\end{aligned}
\end{equation}
Similarly, it holds
\begin{equation}
\begin{aligned}
\Vert \lbrack \nabla ^{\kappa},\nabla \psi]\nabla v\Vert _{L^{2}} \lesssim&\, (\Vert
\nabla v\Vert _{L^{\infty }}\Vert \nabla ^{\kappa+1}\psi\Vert _{L^{2}}+%
\Vert \nabla^2 \psi\Vert _{L^{\infty }}\Vert \nabla ^{\kappa}v\Vert
_{L^{2}})  \\
\lesssim& \, (\Vert \nabla v\Vert _{L^{\infty }}\left( \Vert \nabla ^{\kappa+1}\left(
\psi+\tau v\right) \Vert _{L^{2}}+\Vert \nabla ^{\kappa+1}v\Vert _{L^{2}}\right) +%
\Vert \nabla^2 \psi\Vert _{L^{\infty }}\Vert \nabla ^{k}v\Vert
_{L^{2}}).  
\end{aligned}
\end{equation}
By using the bounds
\[\Vert \nabla^2 \psi\Vert_{L^{\infty }} \lesssim M_1[\vecc{U}](t) (1+t)^{-n/2-1/2} \quad \text{and} \quad \Vert \nabla v\Vert _{L^{\infty }} \lesssim M_0[\vecc{U}](t) (1+t)^{-n/2}, \]
we then have
\begin{equation}\label{Second_Estimate_Com}
\begin{aligned}
&\Vert \lbrack \nabla ^{\kappa},\nabla \psi]\nabla v\Vert _{L^{2}}\Vert \Delta   ( \tilde{\psi}+\tau
\tilde{v}) \Vert_{L^2}\\
\lesssim &\,\begin{multlined}[t](M_0[\vecc{U}](t)+M_1[\vecc{U}](t)) (1+t)^{-n/2}\\
\times \left(\Vert \Delta   ( \nabla^\kappa(\psi+\tau
v) \Vert_{L^2}^2+\Vert \nabla ^{\kappa+1}\left(
\psi+\tau v\right) \Vert _{L^{2}}^2+\Vert \nabla ^{\kappa+1}v\Vert _{L^{2}}^2+\Vert \nabla ^{k}v\Vert
_{L^{2}}^2\right). \end{multlined}
\end{aligned}
\end{equation}
With the same approach, we infer 
\begin{equation}\label{Third_Estimate_Com}
\begin{aligned}
&\left(\Vert v\Vert _{L^{\infty }}\Vert \tilde{w}\Vert
_{L^{2}}+\Vert
\nabla \psi\Vert _{L^{\infty }}\Vert \nabla \tilde{v}\Vert _{L^{2}}\right)\Vert \Delta   ( \tilde{\psi}+\tau
\tilde{v}) \Vert_{L^2}\\
\lesssim&\,\begin{multlined}[t]\left(M_0[\vecc{U}](t)+M_0[v](t)\right)(1+t)^{-n/2}
 \left(\Vert \nabla^\kappa w\Vert
_{L^{2}}^2+\Vert \Delta   ( \nabla^\kappa(\psi+\tau
v) \Vert_{L^2}^2+\Vert \nabla ^{\kappa+1}v\Vert _{L^{2}}^2\right). \end{multlined}  
\end{aligned}
\end{equation}
By plugging the derived estimates into \eqref{R_1_k_0_1} and applying Lemma \ref{M_estimate},   
we obtain \eqref{Equation_Main_R_k}.   \\
\indent Steps I--III taken together complete the proof of Theorem~\ref{Thm_R_k}.
\end{proof}
\subsection{Proof of estimate \eqref{Main_Estimate_E_D_Main}}\label{Proof_Main_Estimate_1}
Our next aim is to prove the following estimate:
\begin{equation} 
\begin{aligned}
\|\vecc{\Psi}\|_{\mathbbm{E},t}^2+\|\vecc{\Psi}\|_{\mathbbm{D},t}^2 \lesssim& \,\begin{multlined}[t] ||| \vecc{\Psi}(0)|||_{H^s}^2+\Vert \vecc{U}_0\Vert^2_{L^1}+\mathdutchcal{M}[v, w, \vecc{U}](t)\|\vecc{\Psi}\|_{\mathbbm{D},t}^2,\end{multlined}
\end{aligned}
\end{equation}	
where the energy norms are defined in \eqref{Energy_Main} and \eqref{Energy_Main_Dissipative}. We begin by proving a helpful auxiliary inequality.
\begin{lemma}   \label{Lemma:Upsilon}
For an integer $s \geq 1$, it holds
\begin{equation}\label{Right_2}
	\sum_{\kappa=0}^{s-1}\Upsilon^{(0,\kappa)}(t)\lesssim \|\vecc{\Psi}\|_{\mathbbm{D},t}^2,
\end{equation}
 where 
\begin{equation} \tag{\ref{D_k_dissipation}}
\Upsilon^{(0,\kappa)}(t)= \int_0^t (1+  \sigma)^{-3/2}\mathbf{E}^{(\kappa)}(\sigma) \, \textup{d}\sigma+\int_0^t (1+\sigma)^{-1/2}\mathbf{D}^{(\kappa)}(\sigma)\, \textup{d}\sigma. 
\end{equation}	
\end{lemma}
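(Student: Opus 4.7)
The proof should be essentially a bookkeeping argument that relies on the identity \eqref{identity_EHs} noted earlier, namely
\[
\sum_{\kappa=0}^{s-1} \mathbf{E}^{(\kappa)}(\sigma) \;=\; |||\vecc{\Psi}(\sigma)|||_{H^s}^{2},
\qquad
\sum_{\kappa=0}^{s-1} \mathbf{D}^{(\kappa)}(\sigma) \;=\; |\vecc{\Psi}(\sigma)|_{\mathbf{H}^{s}}^{2}.
\]
The plan is therefore to sum the defining expression for $\Upsilon^{(0,\kappa)}(t)$ from $\kappa = 0$ to $s-1$, interchange summation and the (finite) time integral, and insert these two identities.

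Carrying this out, I obtain
\[
\sum_{\kappa=0}^{s-1} \Upsilon^{(0,\kappa)}(t)
= \int_0^t (1+\sigma)^{-3/2} |||\vecc{\Psi}(\sigma)|||_{H^s}^{2}\,\textup{d}\sigma
+ \int_0^t (1+\sigma)^{-1/2} |\vecc{\Psi}(\sigma)|_{\mathbf{H}^{s}}^{2}\,\textup{d}\sigma.
\]
The next step is to recognize the right-hand side as precisely the $i=0$ contribution to the dissipative norm $\|\vecc{\Psi}\|_{\mathbbm{D},t}^{2}$ defined in \eqref{Energy_Main_Dissipative}, since for $i=0$ the weights $(1+\sigma)^{i-1/2}$ and $(1+\sigma)^{i-3/2}$ reduce to $(1+\sigma)^{-1/2}$ and $(1+\sigma)^{-3/2}$, and $|\nabla^0\vecc{\Psi}|_{\mathbf{H}^{s-0}} = |\vecc{\Psi}|_{\mathbf{H}^s}$, $|||\nabla^0 \vecc{\Psi}|||_{H^{s-0}} = |||\vecc{\Psi}|||_{H^s}$.

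All terms in the sum defining $\|\vecc{\Psi}\|_{\mathbbm{D},t}^{2}$ are nonnegative (the semi-norm $|\cdot|_{\mathbf{H}^{s-2i}}$ and the norm $|||\cdot|||_{H^{s-2i}}$ are both squared), so the $i=0$ piece is bounded above by the full sum over $0 \leq i \leq [\tfrac{s-1}{2}]$. This yields the claimed bound. There is no serious obstacle here; the only thing to be careful about is that the hidden constant absorbs the trivial factor coming from the identity \eqref{identity_EHs}, and that we indeed have $[\tfrac{s-1}{2}] \geq 0$ for $s \geq 1$ so that $i=0$ is a legitimate index in the sum.
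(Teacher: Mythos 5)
Your argument is correct and matches the paper's own proof: both sum the definition of $\Upsilon^{(0,\kappa)}$ over $\kappa$, invoke the identities in \eqref{identity_EHs}, and then identify the result as the $i=0$ contribution to the nonnegative sum defining $\|\vecc{\Psi}\|_{\mathbbm{D},t}^2$. No further comment is needed.
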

\begin{proof}
The statement follows by noting that
	\begin{equation}
	\begin{aligned}
	\int_0^t (1+\sigma)^{-1/2}\sum_{\kappa=0}^{s-1}\mathbf{D}^{(\kappa)}(\sigma)\, \textup{d}\sigma =&\, \int_0^t (1+\sigma)^{-1/2}|\vecc{\Psi}(\sigma)|_{\mathbf{H}^{s}}^2.
	\end{aligned}
	\end{equation}
and that
	\begin{equation}
	\begin{aligned}
	\int_0^t (1+  \sigma)^{-3/2}\sum_{\kappa=0}^{s-1}\mathbf{E}^{(\kappa)}(\sigma) \, \textup{d}\sigma = &\,\int_0^t (1+  \sigma)^{-3/2}||| \vecc{\Psi}(\sigma)|||_{H^{s}}^2 \, \textup{d}\sigma;
	\end{aligned}
	\end{equation}
cf. \eqref{1}, \eqref{2}, and \eqref{identity_EHs}.	
\end{proof}
\noindent We are now ready to prove the second crucial bound for the weighted energy of the solution.
  \begin{theorem}\label{Thm_Main_1_repeat}  
  	Let $s_0 \geq [n/2]+2$. Then the solution $\vecc{\Psi}=(\psi, v, w, \eta)^T$ of \eqref{Main_System}, \eqref{Main_System_IC} with $b= \tau c^2$ satisfies
  	\begin{equation} 
\|\vecc{\Psi}\|_{\mathbbm{E},t}^2+\|\vecc{\Psi}\|_{\mathbbm{D},t}^2 \lesssim \, ||| \vecc{\Psi}(0)|||_{H^s}^2+\mathdutchcal{M}[v, w, \vecc{U}](t)\|\vecc{\Psi}\|_{\mathbbm{D},t}^2,
  	\end{equation}
  where \begin{equation} 
  \vecc{U} =(v+\tau w,\nabla(\psi+\tau v),\nabla v)^T.  
  \end{equation}	
  \end{theorem}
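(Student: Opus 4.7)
The plan is to obtain the theorem by assembling the higher-order weighted energy estimate from Proposition~\ref{Pro_Higher_Order} with the nonlinear control from Theorem~\ref{Thm_R_k}, and summing carefully over both the derivative order $\kappa$ and the gradient index $i$ appearing in $\|\vecc{\Psi}\|_{\mathbbm{E},t}$ and $\|\vecc{\Psi}\|_{\mathbbm{D},t}$. First I would fix $i\in\{0,1,\dots,[\tfrac{s-1}{2}]\}$, apply Proposition~\ref{Pro_Higher_Order} with $\gamma=i-1/2$ for every $\kappa\geq 0$, and sum over $\kappa$. Using the identity \eqref{identity_EHs}, the left-hand side reassembles into
\begin{equation*}
(1+t)^{i-1/2}|||\vecc{\Psi}(t)|||_{H^{s}}^{2}+\int_{0}^{t}(1+\sigma)^{i-1/2}|\vecc{\Psi}(\sigma)|_{\mathbf{H}^{s}}^{2}\,\textup{d}\sigma,
\end{equation*}
while the algebraic contribution $-\gamma\int_{0}^{t}(1+\sigma)^{i-3/2}\sum_{\kappa}\mathbf{E}^{(\kappa)}\,\textup{d}\sigma$ furnishes exactly the $i$-th summand of the second half of $\|\vecc{\Psi}\|_{\mathbbm{D},t}^{2}$. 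After taking the supremum over $t\in[0,T]$ of the energy piece, this reproduces the $i$-th summand of $\|\vecc{\Psi}\|_{\mathbbm{E},t}^{2}$, while the initial data term $\sum_{\kappa}\mathbf{E}^{(\kappa)}(0)=|||\vecc{\Psi}(0)|||_{H^{s}}^{2}$ comes out cleanly by \eqref{identity_EHs}. (To match the precise gradient-index decomposition of $\|\cdot\|_{\mathbbm{E},t}$ and $\|\cdot\|_{\mathbbm{D},t}$ via $|||\nabla^i\vecc{\Psi}|||_{H^{s-2i}}$, one either restricts the sum over $\kappa$ to the window $[i,\,s-i-1]$ or bounds $|||\vecc{\Psi}|||_{H^{s}}^{2}$ against the norm defined in \eqref{1}; the two conventions differ only by harmless constants.)

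Next I would control the nonlinear remainders $\int_{0}^{t}(1+\sigma)^{i-1/2}\mathbf{R}_{\kappa}(\sigma)\,\textup{d}\sigma$ using Theorem~\ref{Thm_R_k} with $j=i$, which yields the bound
\begin{equation*}
\mathdutchcal{M}[v,w,\vecc{U}](t)\Big(\Upsilon^{(i,\kappa)}(t)+\int_{0}^{t}(1+\sigma)^{i-3/2}\mathbf{D}^{(\kappa-1)}(\sigma)\,\textup{d}\sigma\Big).
\end{equation*}
Summing over $\kappa$ and invoking the natural generalization of Lemma~\ref{Lemma:Upsilon} to the exponent $i$, the $\sum_{\kappa}\Upsilon^{(i,\kappa)}(t)$ piece is absorbed into $\|\vecc{\Psi}\|_{\mathbbm{D},t}^{2}$. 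The base case $i=0$ is supplied directly by Proposition~\ref{Lemma_Energy_Order_0}, in which the nonlinear prefactor is originally $\mathdutchcal{M}+M_{0}[\vecc{U}]+M_{0}[v]$ but collapses to $\mathdutchcal{M}[v,w,\vecc{U}](t)$ thanks to Lemma~\ref{M_estimate} (which applies under the standing assumption $s_{0}\geq[n/2]+2$, a consequence of $s\geq[\tfrac{3n}{2}]+5$).

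The main obstacle is the lower-order residual $\int_{0}^{t}(1+\sigma)^{i-3/2}\mathbf{D}^{(\kappa-1)}(\sigma)\,\textup{d}\sigma$, which is exactly the source of the regularity loss: it has one fewer derivative than $\mathbf{D}^{(\kappa)}$ and thus cannot be reabsorbed at the same level $i$. The key observation is that its time weight $(1+\sigma)^{i-3/2}$ equals $(1+\sigma)^{(i-1)-1/2}$, so after summing over $\kappa$ it matches precisely the first summand of $\|\vecc{\Psi}\|_{\mathbbm{D},t}^{2}$ at the \emph{previous} gradient index $i-1$. Proceeding by induction on $i$ (equivalently, summing over $i$ and rearranging), this residual is absorbed into the global $\|\vecc{\Psi}\|_{\mathbbm{D},t}^{2}$, still multiplied by $\mathdutchcal{M}[v,w,\vecc{U}](t)$. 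Summing the resulting inequalities for $i=0,1,\dots,[\tfrac{s-1}{2}]$ and combining the bound $M_{0}[\vecc{U}]+M_{1}[\vecc{U}]+M_{0}[v]\lesssim\mathdutchcal{M}[v,w,\vecc{U}](t)$ from Lemma~\ref{M_estimate} then yields
\begin{equation*}
\|\vecc{\Psi}\|_{\mathbbm{E},t}^{2}+\|\vecc{\Psi}\|_{\mathbbm{D},t}^{2}\lesssim |||\vecc{\Psi}(0)|||_{H^{s}}^{2}+\mathdutchcal{M}[v,w,\vecc{U}](t)\,\|\vecc{\Psi}\|_{\mathbbm{D},t}^{2},
\end{equation*}
as claimed. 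Note that reabsorption of the $\mathdutchcal{M}\|\vecc{\Psi}\|_{\mathbbm{D},t}^{2}$ term on the left-hand side is postponed to the bootstrap argument of Section~\ref{Section_Proof_Main_Result}, where $\mathdutchcal{M}$ will be shown small.
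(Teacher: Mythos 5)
Your outline captures the right ingredients (Proposition~\ref{Pro_Higher_Order}, Theorem~\ref{Thm_R_k}, the identities \eqref{identity_EHs}, and the step-down from $\mathbf{D}^{(\kappa-1)}$ to the previous gradient index), but there is a sign error in the first paragraph that hides the real obstacle and makes the ``summing over $i$ and rearranging'' shortcut fail. In Proposition~\ref{Pro_Higher_Order} the algebraic term appears as $+\gamma\int_0^t(1+\sigma)^{\gamma-1}\mathbf{E}^{(\kappa)}\,\textup{d}\sigma$ on the \emph{right-hand side}. Only for $i=0$, where $\gamma=-1/2<0$, does moving it across the inequality produce the positive quantity $\tfrac12\int_0^t(1+\sigma)^{-3/2}\mathbf{E}^{(\kappa)}\,\textup{d}\sigma$ on the left, i.e.\ the artificial damping that ``furnishes'' the second half of $\|\vecc{\Psi}\|_{\mathbbm{D},t}^2$. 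For every $i\geq 1$ one has $\gamma=i-1/2\geq 1/2>0$, so after summing over $\kappa\in[i,s-i-1]$ this term sits on the \emph{right} as $(i-\tfrac12)\int_0^t(1+\sigma)^{i-3/2}|||\nabla^i\vecc{\Psi}|||_{H^{s-2i}}^2\,\textup{d}\sigma$, a piece of the dissipative norm multiplied by a constant that is \emph{not} small and comes with \emph{no} $\mathdutchcal{M}$-prefactor. If you simply sum over $i$ and rearrange, you end up with $\|\vecc{\Psi}\|_{\mathbbm{E},t}^2+\|\vecc{\Psi}\|_{\mathbbm{D},t}^2 \lesssim |||\vecc{\Psi}(0)|||_{H^s}^2 + C\,\|\vecc{\Psi}\|_{\mathbbm{D},t}^2 + \mathdutchcal{M}\|\vecc{\Psi}\|_{\mathbbm{D},t}^2$ with a non-small $C$ on the right, which cannot be absorbed. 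The two routes you call ``equivalent'' are not.

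The paper's sequential induction on $j$ is designed precisely to neutralize this term. The induction hypothesis \eqref{Ind_Hyp} at level $j-1$ already asserts that $\int_0^t(1+\sigma)^{j-3/2}|||\nabla^j\vecc{\Psi}(\sigma)|||_{H^{s-2j}}^2\,\textup{d}\sigma$ is bounded by $|||\vecc{\Psi}(0)|||_{H^s}^2+\mathdutchcal{M}[v,w,\vecc{U}](t)\|\vecc{\Psi}\|_{\mathbbm{D},t}^2$ --- i.e.\ the $\gamma$-term is replaced by a quantity where the dissipative norm \emph{does} carry the small $\mathdutchcal{M}$-prefactor, so the step at level $j$ closes. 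Getting this induction off the ground also requires Lemma~\ref{Norm_Estimate}, $|||\nabla\vecc{\Psi}|||_{H^{s-2}}\lesssim|\vecc{\Psi}|_{\mathbf{H}^s}$, to step the second dissipative integral down one gradient index: the base case $j=0$ of \eqref{Induction_Key-Estimate_2} (and hence $j=1$ of \eqref{First_Main_Ind}) is deduced from the $i=0$ full-$H^s$ estimate by this lemma, and the same conversion is used at each inductive step; you should make this step explicit rather than bury it in the parenthetical about ``harmless constants.'' Once the sign and the role of the induction hypothesis are corrected, the remaining assembly --- Theorem~\ref{Thm_R_k} for the nonlinear remainders, Lemma~\ref{Lemma:Upsilon} to absorb $\sum_\kappa\Upsilon^{(j,\kappa)}$, and Lemma~\ref{M_estimate} to collapse $M_0,M_1$ into $\mathdutchcal{M}$ --- matches the paper.
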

\begin{proof}
By writing out the energy terms, we see that our claim can be restated as
  	\begin{equation} \label{Main_est_a}
  	\begin{aligned}
&\begin{multlined}[t] \sum_{j=0}^{[\frac{s-1}{2}]}\sup_{0\leq \sigma\leq t} (1+\sigma)^{j-1/2}||| \nabla^i\vecc{\Psi}(\sigma)|||_{H^{s-2j}}^2\\+\sum_{j=0}^{[\frac{s-1}{2}]}\int_0^t (1+\sigma)^{j-1/2}| \nabla^i\vecc{\Psi}(\sigma)|_{\mathbf{H}^{s-2j}}^2\, \textup{d}\sigma\\
+\sum_{j=0}^{[\frac{s-1}{2}]}     \int_0^t (1+\sigma)^{j-3/2}|||\nabla^i \vecc{\Psi}(t)|||_{H^{s-2j}}^2(\sigma) \, \textup{d}\sigma \end{multlined}\\
 \lesssim &\, |||\vecc{\Psi}(0)|||_{H^s}^2+\mathdutchcal{M}[v, w, \vecc{U}](t)\|\vecc{\Psi}\|_{\mathbbm{D},t}^2.
\end{aligned}    
\end{equation}
It is, therefore, enough to prove the following three estimates: 
 \begin{equation}
\begin{aligned}  
\label{Main_Estimate_s_0}
&\begin{multlined}[t](1+t)^{-1/2}||| \vecc{\Psi}(t)|||_{H^s}^2+\int_0^t (1+\sigma)^{-1/2}| \vecc{\Psi}(\sigma)|_{\mathbf{H}^{s}}^2 \, \textup{d}\sigma\\
+\int_0^t (1+\sigma)^{-3/2}|||\vecc{\Psi}(t)|||_{H^s}^2(\sigma) d\sigma
 \end{multlined}\\
\lesssim &||| \vecc{\Psi}(0)|||_{H^s}^2
+\mathdutchcal{M}[v, w, \vecc{U}](t) \|\vecc{\Psi}\|_{\mathbbm{D},t}^2, \end{aligned} 
\end{equation}
as well as
\begin{equation}\label{First_Main_Ind}
\begin{aligned}
&\begin{multlined}[t] (1+t)^{j-1/2}||| \nabla^j\vecc{\Psi}(\sigma)|||_{H^{s-2j}}^2 
+ \int_0^t (1+\sigma)^{j-1/2}| \nabla^j\vecc{\Psi}(\sigma)|_{\mathbf{H}^{s-2j}}^2\, \textup{d}\sigma \end{multlined}\\   
\lesssim &\,\, ||| \vecc{\Psi}(0)|||_{H^s}^2     
 +\mathdutchcal{M}[v, w, \vecc{U}](t)\|\vecc{\Psi}\|_{\mathbbm{D},t}^2,  \qquad j \in \left\{1, \ldots, \left[\tfrac{s-1}{2}\right]\right\},
 \end{aligned}
\end{equation}  
and
\begin{equation}\label{Induction_Key-Estimate}
\begin{aligned}  
 &\int_0^t (1+\sigma)^{j-3/2}|||\nabla^j \vecc{\Psi}(t)|||_{H^{s-2j}}^2(\sigma) \, \textup{d}\sigma\\
 \lesssim &\,\, ||| \vecc{\Psi}(0)|||_{H^s}^2
 +\mathdutchcal{M}[v, w, \vecc{U}](t) \|\vecc{\Psi}\|_{\mathbbm{D},t}^2, \qquad j \in \left\{1, \ldots, \left[\tfrac{s-1}{2}\right]\right\}. 
\end{aligned} 
\end{equation}
We can rewrite \eqref{Induction_Key-Estimate} by shifting the index as
\begin{equation}\label{Induction_Key-Estimate_2}
\begin{aligned}  
&\int_0^t (1+\sigma)^{j-1/2}|||\nabla^{j+1} \vecc{\Psi}(t)|||_{H^{s-2j-2}}^2(\sigma) \, \textup{d}\sigma\\
\lesssim&\, ||| \vecc{\Psi}(0)|||_{H^s}^2  
+\mathdutchcal{M}[v, w, \vecc{U}](t) \|\vecc{\Psi}\|_{\mathbbm{D},t}^2, \qquad j \in \left\{0, \ldots, \left[\tfrac{s-1}{2}\right]-1\right\}. 
\end{aligned}
\end{equation}
We begin by proving estimate \eqref{Main_Estimate_s_0}.\\
\paragraph {\bf Proof of estimate \eqref{Main_Estimate_s_0}: }      
By Theorem~\ref{Thm_R_k}, we have the bound
\begin{equation} \label{R_kappa_D}
\begin{aligned}
\int_0^t (1+\sigma)^{-1/2}\mathbf{R}_\kappa(\sigma)\, \textup{d}\sigma 
\lesssim\, \mathdutchcal{M}[v, w, \vecc{U}](t)\left(\int_0^t (1+\sigma)^{-3/2}\mathbf{D}^{\kappa-1}(\sigma)\textup{d}\sigma+\Upsilon^{(0,\kappa)}(t)\right),
\end{aligned} 
\end{equation}
provided that $s_0\geq [n/2]+2$. Taking $\gamma=-1/2$ in estimate \eqref{Main_Estimate_k} yields
	\begin{equation}
	\begin{aligned}
	&(1+t)^{-1/2}\mathbf{E}^{(\kappa)}(t)+\frac12 \int_0^t (1+\sigma)^{-3/2}\mathbf{E}^{(\kappa)}(\sigma) \, \textup{d}\sigma+\int_0^t (1+\sigma)^{-1/2}\mathbf{D}^{(\kappa)}(\sigma)\, \textup{d}\sigma\\
	\lesssim& \, \begin{multlined}[t]\mathbf{E}^{(\kappa)}(0)	+\int_0^t (1+\sigma)^{-1/2}\mathbf{R}_\kappa(\sigma)\, \textup{d}\sigma.\end{multlined}
	\end{aligned}    
	\end{equation}  
Using the above bound on the $\mathbf{R}_\kappa$ term results in
	\begin{equation}\label{Order_zero_Estimate}
\begin{aligned}
&(1+t)^{-1/2}\mathbf{E}^{(\kappa)}(t)+\frac12 \int_0^t (1+\sigma)^{-3/2}\mathbf{E}^{(\kappa)}(\sigma) \, \textup{d}\sigma+\int_0^t (1+\sigma)^{-1/2}\mathbf{D}^{(\kappa)}(\sigma)\, \textup{d}\sigma\\
\lesssim& \, \begin{multlined}[t]\mathbf{E}^{(\kappa)}(0)+\mathdutchcal{M}[v, w, \vecc{U}](t)\Big(\int_0^t (1+\sigma)^{-3/2}\mathbf{D}^{\kappa-1}(\sigma)\textup{d}\sigma+\Upsilon^{(0,\kappa)}(t)\Big).\end{multlined}
\end{aligned} 
\end{equation}
We next wish to sum the above inequalities over  $1\leq \kappa<s-1$ and add the resulting bound to \eqref{Main_Estimate_0_1}, which we restate here for convenience
\begin{equation}   
(1+t)^{-1/2}\mathbf{E}^{(0)}(t)+\Upsilon^{(0,0)}(t)
\lesssim \mathbf{E}^{(0)}(0)
+\int_0^t (1+\sigma)^{-1/2}\mathbf{R}  _0(\sigma)\, \textup{d}\sigma.
\end{equation}
To this end, we first recall that
\begin{equation} 
\displaystyle \sum_{\kappa=0}^{s-1} \mathbf{E}^{(\kappa)}(t) = ||| \vecc{\Psi}(t)|||_{H^s}^2, \qquad \sum_{\kappa=0}^{s-1} \mathbf{D}^{(\kappa)}(t)= |\vecc{\Psi}(t)|_{\mathbf{H}^{s}}^2.
\end{equation}
Additionally, by virtue of Lemma~\ref{Lemma:Upsilon}, the following estimate holds:
\begin{equation}\label{Key_estimate}
\sum_{\kappa=1}^{s-1}\Big(\int_0^t (1+\sigma)^{-3/2}\mathbf{D}^{\kappa-1}(\sigma)\textup{d}\sigma+\Upsilon^{(0,\kappa)}(t)\Big)\lesssim \sum_{\kappa=0}^{s-1}\Upsilon^{(0,\kappa)}(t)\lesssim  \|\vecc{\Psi}\|_{\mathbbm{D},t}^2, 
\end{equation} 
Hence, by summing  inequalities \eqref{Order_zero_Estimate} over $1\leq \kappa<s-1$ and
adding the resulting estimate to \eqref{Main_Estimate_0_1},  we arrive at
  \begin{equation}  \label{Main_Estimate_s}
 \begin{aligned}
&(1+t)^{-1/2}||| \vecc{\Psi}(t)|||_{H^s}^2+\int_0^t (1+\sigma)^{-3/2}||| \vecc{\Psi}(t)|||_{H^s}^2(\sigma) \, \textup{d}\sigma
+\int_0^t (1+\sigma)^{-1/2}| \vecc{\Psi}(\sigma)|_{\mathbf{H}^{s}}^2\, \textup{d}\sigma \\
\lesssim&\, ||| \vecc{\Psi}(0)|||_{H^s}^2
 +\mathdutchcal{M}[v, w, \vecc{U}](t)\|\vecc{\Psi}\|_{\mathbbm{D},t}^2, \end{aligned} 
\end{equation}
 where we have additionally used the estimate of the $\mathbf{R}_0$ term given in \eqref{R_0_Main_Estimate}. This proves \eqref{Main_Estimate_s_0}. \\
 \indent We next prove estimates \eqref{First_Main_Ind} and \eqref{Induction_Key-Estimate_2} by induction on $j$.  \\[2mm]
\paragraph {\bf Basis step for  \eqref{First_Main_Ind}  and \eqref{Induction_Key-Estimate_2}:}
By Lemma~\ref{Norm_Estimate}, we know that
\begin{equation}
|||\nabla  \vecc{\Psi}(t)|||_{H^{s-2}}\lesssim | \vecc{\Psi}(t)|_{\mathbf{H}^{s}}.
\end{equation}
Thus, it is clear from  the proven bound \eqref{Main_Estimate_s_0}  that estimate \eqref{Induction_Key-Estimate_2} holds for $j=0$ and \eqref{First_Main_Ind} holds for $j=1$.  \\

\paragraph {\bf Inductive step for estimates \eqref{First_Main_Ind}  and \eqref{Induction_Key-Estimate_2}:} 
Let $1\leq j\leq [\frac{s-1}{2}]$. We assume that \eqref{First_Main_Ind} holds with $j-1$ in place of $j$:
\begin{equation}
\begin{aligned}
&\begin{multlined}[t] (1+t)^{j-3/2}||| \nabla^{j-1}\vecc{\Psi}(\sigma)|||_{H^{s-2(j-1)}}^2   
+ \int_0^t (1+\sigma)^{j-3/2}| \nabla^{j-1}\vecc{\Psi}(\sigma)|_{\mathbf{H}^{s-2(j-1)}}^2\, \textup{d}\sigma \end{multlined}\\   
\lesssim &\,\, ||| \vecc{\Psi}(0)|||_{H^s}^2     
+\mathdutchcal{M}[v, w, \vecc{U}](t)\|\vecc{\Psi}\|_{\mathbbm{D},t}^2,
\end{aligned}
\end{equation}
and that \eqref{Induction_Key-Estimate_2} holds with $j-1$ in place of $j$: 


\begin{equation}\label{Ind_Hyp}  
\begin{aligned}
\int_0^t (1+\sigma)^{j-3/2}||| \nabla^j\vecc{\Psi}(\sigma)|||_{H^{s-2j}}^2\, \textup{d}\sigma
\lesssim\,|||\vecc{\Psi}(0)|||_{H^s}^2  +\mathdutchcal{M}[v, w, \vecc{U}](t)\|\vecc{\Psi}\|_{\mathbbm{D},t}^2.
\end{aligned}
\end{equation}
Under this induction hypothesis we first prove \eqref{First_Main_Ind}. Taking $\gamma=j-1/2$ in estimate \eqref{Main_Estimate_k} leads to
	\begin{equation}
\begin{aligned}
&(1+t)^{j-1/2}\mathbf{E}^{(\kappa)}(t)+\int_0^t (1+\sigma)^{j-1/2}\mathbf{D}^{(\kappa)}(\sigma)\, \textup{d}\sigma\\
\lesssim& \, \begin{multlined}[t]\mathbf{E}^{(\kappa)}(0)+ (j-1/2) \int_0^t (1+\sigma)^{j-3/2}\mathbf{E}^{(\kappa)}(\sigma) \, \textup{d}\sigma
+\int_0^t (1+\sigma)^{j-1/2}\mathbf{R}_\kappa(\sigma)\, \textup{d}\sigma.\end{multlined}
\end{aligned} 
\end{equation}
Using Theorem~\ref{Thm_R_k} allows us to bound the last term on the right above, thus obtaining
	\begin{equation}\label{Induction_2}
\begin{aligned}
&(1+t)^{j-1/2}\mathbf{E}^{(\kappa)}(t)+\int_0^t (1+\sigma)^{j-1/2}\mathbf{D}^{(\kappa)}(\sigma)\, \textup{d}\sigma\\
\lesssim& \, \begin{multlined}[t]\mathbf{E}^{(\kappa)}(0)+ (j-1/2) \int_0^t (1+\sigma)^{j-3/2}\mathbf{E}^{(\kappa)}(\sigma) \, \textup{d}\sigma
\\+\mathdutchcal{M}[v, w, \vecc{U}](t)\left(\Upsilon^{(j,\kappa)}(t)+\int_0^t (1+\sigma)^{(j-1)-1/2}\mathbf{D}^{\kappa-1}(\sigma)\textup{d}\sigma\right).\end{multlined}  
\end{aligned} 
\end{equation}
We next wish to sum up the above inequalities over $\kappa$, where $j \leq \kappa \leq s-j-1$. We note that
\begin{equation} \label{identities}
\begin{aligned}
\sum_{\kappa=j}^{s-j-1} \mathbf{E}^{(\kappa)}(t)=&\, |||\nabla^j \vecc{\Psi}(t)|||_{H^{s-2j}}^2, \quad \sum_{\kappa=j}^{s-j-1} \mathbf{D}^{(\kappa)}(t)= |\vecc{\Psi}(t)|_{\mathbf{H}^{s}}^2;
\end{aligned}
\end{equation}
cf. \eqref{energy_mathbfE}, \eqref{energy_mathbfD}. After summation, we thus have
\begin{equation}
\begin{aligned}
&(1+t)^{j-1/2}|||\nabla^j \vecc{\Psi}(t)|||_{H^{s-2j}}^2+\int_0^t (1+\sigma)^{j-1/2}| \nabla^j\vecc{\Psi} (\sigma)|_{\mathbf{H}^{s-2j}}^2\, \textup{d}\sigma\\  
\lesssim& \, \begin{multlined}[t]|||\nabla^j \vecc{\Psi}(0)|||_{H^{s-2j}}^2 +\int_0^t (1+\sigma)^{j-3/2}|||\nabla^j \vecc{\Psi}(\sigma)|||_{H^{s-2j}}^2 \, \textup{d}\sigma\\
+\sum_{\kappa=j}^{s-j-1} \mathdutchcal{M}[v, w, \vecc{U}](t)\Big(\Upsilon^{(j,\kappa)}(t)+\int_0^t (1+\sigma)^{(j-1)-1/2}\mathbf{D}^{\kappa-1}(\sigma)\textup{d}\sigma\Big).   \end{multlined}
\end{aligned}
\end{equation}
Next we need to bound $\sum_{\kappa=j}^{s-j-1}(1+\sigma)^{j-1-1/2}\mathbf{D}^{\kappa}(\sigma)\textup{d}\sigma$ on the right. We have
\begin{equation}
\begin{aligned}
&\sum_{\kappa=j}^{s-j-1} \left(\int_0^t (1+\sigma)^{(j-1)-1/2}\mathbf{D}^{\kappa-1}(\sigma)\textup{d}\sigma\right)\\
=&\, \sum_{\kappa=j+1}^{s-j-1}(1+\sigma)^{(j-1)-1/2}\mathbf{D}^{\kappa-1}(\sigma)\textup{d}\sigma
+\int_0^t (1+\sigma)^{(j-1)-1/2}\mathbf{D}^{j-1}(\sigma)\textup{d}\sigma\\
\leq&\,\sum_{\kappa=j}^{s-j-1}(1+\sigma)^{j-1-1/2}\mathbf{D}^{\kappa}(\sigma)\textup{d}\sigma
+\int_0^t (1+\sigma)^{(j-1)-1/2}\mathbf{D}^{j-1}(\sigma)\textup{d}\sigma.
\end{aligned}
\end{equation}
By setting  $\kappa^\prime=\kappa-(j-1)$, we deduce 
\begin{equation}
\begin{aligned}
\sum_{\kappa=j-1}^{s-j}\Vert \nabla^\kappa \vecc{\Psi}\Vert_{L^2}=\sum_{\kappa^\prime=0}^{s-2j+1}\Vert \nabla^{\kappa'}\nabla^{j-1} \vecc{\Psi}\Vert_{L^2}=&\,\Vert \nabla^{j-1}  \vecc{\Psi} \Vert_{H^{s-2j+1}}\\
=&\,\Vert \nabla^{j-1}  \vecc{\Psi} \Vert_{H^{s-2(j-1)-1}}=|\nabla^{j-1} \vecc{\Psi}|_{\mathbf{H}^{s-2(j-1)}}.
\end{aligned}  
\end{equation}
We thus obtain 
\begin{equation}
\int_0^t (1+\sigma)^{(j-1)-1/2}\mathbf{D}^{\kappa-1}(\sigma)\textup{d}\sigma\lesssim \Upsilon^{j-1,\kappa}(t).
\end{equation}
Furthermore, 
\begin{equation}
\sum_{\kappa=j}^{s-j-1}(1+\sigma)^{j-1-1/2}\mathbf{D}^{\kappa}(\sigma)\textup{d}\sigma\lesssim \sum_{\kappa=j}^{s-j-1}(1+\sigma)^{j-1/2}\mathbf{D}^{\kappa}(\sigma)\textup{d}\sigma\lesssim \sum_{\kappa=j}^{s-j-1}\Upsilon^{j,\kappa}(t).
\end{equation}   
Altogether, we have
\begin{equation} \label{ineq__}
\begin{aligned}  
&\sum_{\kappa=j}^{s-j-1} \Big(\Upsilon^{(j,\kappa)}(t)+\int_0^t (1+\sigma)^{(j-1)-1/2}\mathbf{D}^{\kappa-1}(\sigma)\textup{d}\sigma\Big)\\
\lesssim &\sum_{\kappa=j}^{s-j-1} \Upsilon^{(j,\kappa)}(t)+\int_0^t (1+\sigma)^{(j-1)-1/2}\mathbf{D}^{j-1}(\sigma)\textup{d}\sigma. 
\end{aligned}
\end{equation}
The above estimates imply that
\begin{equation}\label{D_j_1_Estimate}
\begin{aligned}
\int_0^t (1+\sigma)^{(j-1)-1/2}\mathbf{D}^{j-1}(\sigma)\textup{d}\sigma
\lesssim&\, \sum_{\kappa=j-1}^{s-j} \int_0^t (1+\sigma)^{(j-1)-1/2}\mathbf{D}^{\kappa}(\sigma)\textup{d}\sigma\\
\lesssim &\, \int_0^t (1+  \sigma)^{(j-1)-1/2} | \nabla^{j-1}\vecc{\Psi}(\sigma)|_{\mathbf{H}^{s-2(j-1)}}^2 \\   
\lesssim &\,\, ||| \vecc{\Psi}(0)|||_{H^s}^2     
+\mathdutchcal{M}[v, w, \vecc{U}](t)\|\vecc{\Psi}\|_{\mathbbm{D},t}^2,  
\end{aligned}      
\end{equation}
where we have used the induction hypothesis in the last inequality. Furthermore,
\begin{equation}\label{Upsilon_Estimate}
\begin{aligned}
\sum_{\kappa=j}^{s-j-1}\Upsilon^{(j,\kappa)}(t)=&\int_0^t (1+\sigma)^{j-3/2}||| \nabla^j(\psi, v, w,\eta)(\sigma)|||_{H^{s-2j}}^2d\sigma\\
&+\int_0^t (1+\sigma)^{j-1/2}|||\nabla^j (\psi, v, w,\eta)(\sigma)|||_{\mathbf{H}^{s-2j}}^2d\sigma\\
\lesssim &\,\,\|\vecc{\Psi}\|_{\mathbbm{D},t}^2.  
\end{aligned}
\end{equation}
Hence, this estimate taken together with \eqref{D_j_1_Estimate} leads to 
\begin{equation}\label{Inde_main_term}
\begin{aligned}
&(1+t)^{j-1/2}|||\nabla^j \vecc{\Psi}(t)|||_{H^{s-2j}}^2+\int_0^t (1+\sigma)^{j-1/2}| \nabla^j\vecc{\Psi}(\sigma)|_{\mathbf{H}^{s-2j}}^2\, \textup{d}\sigma\\
\lesssim& \, \begin{multlined}[t]|||\nabla^j \vecc{\Psi}(0)|||_{H^{s-2j}}^2 +(j-1/2)\int_0^t (1+\sigma)^{j-3/2}|||\nabla^j \vecc{\Psi}(\sigma)|||_{H^{s-2j}}^2 \, \textup{d}\sigma\\
+\mathdutchcal{M}[v, w, \vecc{U}](t)\|\vecc{\Psi}\|_{\mathbbm{D},t}^2. \end{multlined}
\end{aligned}
\end{equation} 
Now using the induction hypothesis, we can estimate the second term on the right-hand side above as
\begin{equation}\label{Induction_Assump}
\int_0^t (1+\sigma)^{j-3/2}|||\nabla^j \vecc{\Psi}(\sigma)|||_{H^{s-2j}}^2 \, \textup{d}\sigma \lesssim  \mathdutchcal{M}[v, w, \vecc{U}](t)\|\vecc{\Psi}\|_{\mathbbm{D},t}^2
\end{equation}
and further obtain
\begin{equation}\label{Inde_main_term_2}
\begin{aligned}
&\begin{multlined}[t]  (1+t)^{j-1/2}|||\nabla^j \vecc{\Psi}(t)|||_{H^{s-2j}}^2
+\int_0^t (1+\sigma)^{j-1/2}| \nabla^j\vecc{\Psi}(\sigma)|_{\mathbf{H}^{s-2j}}^2\, \textup{d}\sigma \end{multlined}\\
\lesssim&\, \begin{multlined}[t] |||\nabla^j \vecc{\Psi}(0)|||_{H^{s-2j}}^2 +\mathdutchcal{M}[v, w, \vecc{U}](t)\|\vecc{\Psi}\|_{\mathbbm{D},t}^2.
\end{multlined}
\end{aligned}  
\end{equation}  
This proves estimate \eqref{First_Main_Ind}. \\
\indent It remains to prove estimate \eqref{Induction_Key-Estimate_2}. Using $|\mathscr{F}^{(0)}(t)|\lesssim \mathbf{E}_0^2$ obtained in Lemma~\ref{Lemma_Equivl_2}, we have after multiplying by  $(1+t)^{\gamma}$ and integrating with respect to $t$,   
 \begin{equation}   
\begin{aligned} 
\label{Main_Estimate_2}
\int_0^t (1+\sigma)^{\gamma}\mathbf{D}_0^2(\sigma)d\sigma
\leq \mathbf{E}_0^2(0)+ (1+t)^\gamma\mathbf{E}_0^2(t)
+C\int_0^t (1+\sigma)^{\gamma}\mathbf{R}_0(\sigma)d\sigma.
\end{aligned}
\end{equation}
Similarly to before, we can show that for all $\kappa\geq 1$,
 \begin{equation} 
 \begin{aligned}
 \label{Main_Estimate_k_2}
\int_0^t (1+\sigma)^{\gamma}\mathbf{D}_\kappa^2(\sigma)d\sigma\leq&\,\mathbf{E}_\kappa^2(0)+(1+t)^\gamma\mathbf{E}_\kappa^2(t)
 +C\int_0^t (1+\sigma)^{\gamma}\mathbf{R}_\kappa(\sigma)d\sigma;
 \end{aligned} 
\end{equation}
see \eqref{Main_Estimate_k}. We set $\gamma=j-1/2$ in \eqref{Main_Estimate_k_2}. By relying on Theorem~\ref{Thm_R_k} and  summing up over $\kappa$ for $j\leq \kappa\leq s-j-1$,  we obtain 
 \begin{equation}\label{Inde_main_term_3}
\begin{aligned}
&\int_0^t (1+\sigma)^{j-1/2}| \nabla^j \vecc{\Psi}(\sigma)|_{\mathbf{H}^{s-2j}}^2\, \textup{d}\sigma\\  
\lesssim& \begin{multlined}[t]|||\nabla^j \vecc{\Psi}(0)|||_{H^{s-2j}}^2 +(1+t)^{j-1/2}|||\nabla^j \vecc{\Psi}(t)|||_{H^{s-2j}}^2\\
+\mathdutchcal{M}[v, w, \vecc{U}](t)\sum_{\kappa=j}^{s-j-1}\Big(\Upsilon^{(j,\kappa)}(t)+\int_0^t (1+\sigma)^{(j-1)-1/2}\mathbf{D}^{\kappa-1}(\sigma)\textup{d}\sigma\Big). \end{multlined}
\end{aligned}
\end{equation}
Using \eqref{First_Main_Ind}, which has been proven to hold for all $1\leq j\leq [\frac{s-1}{2}]$, we have the estimate 
\begin{equation}\label{Estima_Right_1}
\begin{aligned}
(1+t)^{j-1/2}|||\nabla^j \vecc{\Psi}(t)|||_{H^{s-2j}}^2
\lesssim ||| \vecc{\Psi}(0)|||_{H^s}^2
 +\mathdutchcal{M}[v, w, \vecc{U}](t) \|\vecc{\Psi}\|_{\mathbbm{D},t}^2, 
 \end{aligned}
\end{equation}
where we have also employed 
\begin{equation}\label{Initial_value_Est}
|||\nabla^j \vecc{\Psi}(0)|||_{H^{s-2j}}\lesssim ||| \vecc{\Psi}(0)|||_{H^{s}}, \qquad \sum_{\kappa=j}^{s-j-1}\Upsilon^{(j,\kappa)}(t)\lesssim  \|\vecc{\Psi}\|_{\mathbbm{D},t}^2, 
\end{equation}
and  \eqref{D_j_1_Estimate}.  
Hence, using the fact that $|||\nabla \vecc{\Psi}|||_{H^{s-2}} \lesssim |\vecc{\Psi}|_{\mathbf{H}^{s}} $ and the bounds derived above, estimate \eqref{Inde_main_term_3} further yields
\begin{equation}\label{Inde_main_term_4}
\begin{aligned}
&\int_0^t (1+\sigma)^{j-1/2}||| \nabla^{j+1}(\psi, v, w,\eta)(\sigma)|||_{H^{s-2j-2}}^2\, \textup{d}\sigma\\  
\lesssim& ~|||\vecc{\Psi}(0)|||_{H^{s}}^2 +\mathdutchcal{M}[v, w, \vecc{U}](t) \|\vecc{\Psi}\|_{\mathbbm{D},t}^2,
\end{aligned}
\end{equation}
 which proves \eqref{Induction_Key-Estimate_2}. This last step completes the proof.
  \end{proof}
\subsection{Proof of estimate \eqref{M_weighted_estimate_Main}}  \label{Proof_Decay_Estimates}
To complete our analysis, it remains to prove estimate \eqref{M_weighted_estimate_Main}. We do this next.

 \begin{theorem}
 	\label{Thm_Mcal}  Let $b=\tau c^2$ and $n \geq 3$. Suppose that \[\vecc{\Psi}_{0}=\vecc{\Psi}(t=0)\in (H^{s}(\mathbb{R}^n)\cap L^{1}(\mathbb{R}^n))^3,\]
 	where $s\geq [n/2]+3$. Then the following estimate holds for all $t\in \lbrack 0,T]$:
 	\begin{equation} \label{M_weighted_estimate}
 	\begin{aligned}
 	\mathdutchcal{M}[v, w, \vecc{U}](t) \lesssim&\, \begin{multlined}[t]|||\vecc{\Psi}_0|||_{H^{s}}^2+ \Vert\vecc{U}_0\Vert_{L^1}+ \mathdutchcal{M}^2[v, w, \vecc{U}](t)\vspace{0.2cm}\\
 	+(M_0[v](t)+M[\vecc{U}]_0(t)+M_1[\vecc{U}](t)) \|\vecc{\Psi}\|_{\mathbbm{E},t}.\end{multlined}
 	\end{aligned}%
 	\end{equation}
 \end{theorem}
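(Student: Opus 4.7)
The plan is to use the mild solution representation \eqref{IP_Psi0}, namely
\[\vecc{\Psi}(t)=e^{t\mathcal{A}}\vecc{\Psi}_0+\int_0^t e^{(t-r)\mathcal{A}}\mathbb{F}(\vecc{\Psi},\nabla\vecc{\Psi})(r)\, \textup{d}r,\]
and apply the linear decay estimates of Propositions~\ref{Proposition_Linear}, \ref{Decay_w_New}, and \ref{Lemma_decay_v_infty} termwise to every component appearing in $\mathdutchcal{M}[v,w,\vecc{U}]$. Commuting $\nabla^{j}$ with the semigroup yields, for each admissible $j$, a decomposition of $\nabla^{j}\vecc{U}(t)$, $\nabla^{j}v(t)$, and $\nabla^{j}w(t)$ into a homogeneous piece depending only on $\vecc{\Psi}_0$ and a Duhamel piece depending on the nonlinearity $\mathbb{F}$ evaluated along the solution.

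For the homogeneous pieces, I would invoke Proposition~\ref{Proposition_Linear} with $\ell$ chosen so that $n/4+j/2\leq \ell/2$ and $j+\ell\leq s$: this controls $\|\nabla^{j}\vecc{U}(t)\|_{L^{2}}$ by a multiple of $(1+t)^{-n/4-j/2}(\|\vecc{U}_0\|_{L^{1}}+\|\nabla^{j+\ell}\vecc{U}_0\|_{L^{2}})$, and the analogous bounds for $v$ and $w$ follow from Propositions~\ref{Decay_w_New} and \ref{Lemma_decay_v_infty}. The choice $s_0=[(2s-n)/4]$ in \eqref{Def_Mcal} is precisely what guarantees that $j+\ell$ stays within the available $H^{s}$ regularity of the data, as already worked out in the justification preceding Lemma~\ref{M_estimate}; consequently, the homogeneous contribution to $\mathdutchcal{M}[v,w,\vecc{U}](t)$ is controlled by $|||\vecc{\Psi}_0|||_{H^{s}}+\|\vecc{U}_0\|_{L^{1}}$.

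The main work lies in the Duhamel term. Since the linear decay is of regularity-loss type, I would split the time integral as $\int_0^{t}=\int_0^{t/2}+\int_{t/2}^{t}$ and, following the strategy used in~\cite{nikolic2020mathematical}, apply the $L^{1}\!\to\! L^{2}$ low-frequency part of the linear estimate to the forcing on $[0,t/2]$ and the high-regularity $L^{2}\!\to\! L^{2}$ part on $[t/2,t]$. The nonlinearity $\mathbb{F}=\tfrac{2}{\tau}(0,0,\,kvw+\nabla\psi\cdot\nabla v,\,0)^{T}$ is quadratic, so pointwise in time its $L^{1}$ norm is bounded by $\|v\|_{L^{2}}\|w\|_{L^{2}}+\|\nabla\psi\|_{L^{2}}\|\nabla v\|_{L^{2}}$; each factor carries one of the decay weights built into $\mathdutchcal{M}$, and after time integration this produces exactly the quadratic term $\mathdutchcal{M}^{2}[v,w,\vecc{U}](t)$. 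For the high-regularity piece on $[t/2,t]$, Moser-type product estimates together with endpoint Sobolev embeddings let me split each factor of the forcing into an $L^{\infty}$-type factor, absorbed into one of $M_{0}[v]$, $M_{0}[\vecc{U}]$, $M_{1}[\vecc{U}]$, and a higher-order Sobolev factor, controlled by $\|\vecc{\Psi}\|_{\mathbbm{E},t}$ via \eqref{Energy_Main}; this generates the mixed cross term on the right-hand side of \eqref{M_weighted_estimate}.

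The hard part will be verifying that, once integrated against the linear decay kernels, every contribution indeed obeys the $(1+t)^{n/4+j/2}$ weight (or $(1+t)^{n/4+1/2+j/2}$ for the $w$ component) prescribed in \eqref{Def_Mcal}, uniformly in $t$. This requires careful bookkeeping across the two regimes $r\in[0,t/2]$, where $1+t\sim 1+(t-r)$ and the linear kernel governs the decay, and $r\in[t/2,t]$, where $1+t\sim 1+r$ and the decay comes from the nonlinear factors themselves. The standing hypothesis $s\geq[3n/2]+5$, together with $s_{0}\geq[n/2]+2$ required in Lemma~\ref{M_estimate}, is what simultaneously guarantees convergence of the time integrals and leaves enough room for the commutator and Moser indices to fit inside $H^{s}$. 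Taking the supremum in $t$ of all three blocks of estimates then yields \eqref{M_weighted_estimate}.
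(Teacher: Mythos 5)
Your treatment of the $\vecc{U}$ block matches the paper's Step I closely: take the mild solution \eqref{IP_Psi0}, apply $\nabla^{j}$, estimate in the $\mathrm{H}$-norm, split the Duhamel integral at $t/2$, and use the regularity-loss linear estimate of Proposition~\ref{Proposition_Linear} twice on each half (once for the $L^{1}$ low-frequency contribution, once for the high-order Sobolev contribution). The $L^{1}$ pairing of the two quadratic factors gives $\mathdutchcal{M}^{2}$, and the $L^{\infty}\times H^{s}$ pairing gives the mixed $M_{\cdot}\cdot\|\vecc{\Psi}\|_{\mathbbm{E},t}$ term, as you anticipate.

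Where you diverge from the paper, and where there is a genuine gap, is the $w$ and $v$ blocks of $\mathdutchcal{M}$. You propose to run the same Duhamel scheme for $w$ using Proposition~\ref{Decay_w_New} and for $v$ using Proposition~\ref{Lemma_decay_v_infty}. Notice first that the $\mathrm{H}$-norm \eqref{norm_1}/\eqref{norm} controls $\|v+\tau w\|_{L^{2}}$ and $\|\nabla v\|_{L^{2}}$ but not $\|w\|_{L^{2}}$ or $\|v\|_{L^{2}}$, so an entirely different linear estimate is needed for those two quantities; you are right to reach for Propositions~\ref{Decay_w_New}--\ref{Lemma_decay_v_infty}. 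But Proposition~\ref{Decay_w_New} is proved only ``provided that the thermal relaxation time $\tau>0$ is sufficiently small,'' and that restriction is \emph{not} among the hypotheses of Theorem~\ref{Thm_Mcal} or of the main Theorem~\ref{Main_Theorem}. A Duhamel route through Proposition~\ref{Decay_w_New} would therefore silently import a smallness condition on $\tau$ that the theorem does not assume. The paper avoids this by handling $w$ with a direct physical-space energy argument on the third equation of \eqref{Main_System}: multiplying by $\nabla^{j}w$, using the intrinsic damping $\tau w_t=-w+\cdots$, and fixing $\epsilon$ so that
\begin{equation}
\frac{\textup{d}}{\textup{d}t}\bigl(e^{t/\tau}\|\nabla^{j}w\|_{L^{2}}^{2}\bigr)\lesssim \frac{1}{\tau}e^{t/\tau}\bigl(\|\nabla^{j+1}\vecc{\Psi}\|_{\mathrm{H}}^{2}+\|\nabla^{j}(vw+\nabla\psi\cdot\nabla v)\|_{L^{2}}^{2}\bigr),
\end{equation}
then integrating against the already-established polynomial decay via \eqref{Expon_Poly_Ineq}. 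This is robust in $\tau$ and feeds off Step I rather than off a separate linear $w$-estimate. The $v$-decay then falls out trivially from $v=(v+\tau w)-\tau w$, i.e.\ $\|\nabla^{j}v\|_{L^{2}}\lesssim\|\nabla^{j}w\|_{L^{2}}+\|\nabla^{j}\vecc{\Psi}\|_{\mathrm{H}}$; Proposition~\ref{Lemma_decay_v_infty} is not used in the nonlinear argument. So: keep your Step I, but for $w$ you should replace the Duhamel-plus-Proposition~\ref{Decay_w_New} plan with the exponential-weight energy estimate on the third equation (or else add and justify an explicit smallness hypothesis on $\tau$), and then deduce $v$ algebraically.
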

\noindent Before proceeding to the proof, we note that \[\Vert \nabla ^{j} \vecc{U}( t )
\Vert _{L^2} \leq \Vert \nabla ^{j}\vecc{\Psi}( t )
\Vert _{\mathrm{H}},\]    
where $\mathrm{H}=\dot{H}^{1}(\mathbb{R}^{n })\times \dot{H}^{1}(\mathbb{R}%
^{n})\times L^{2}(\mathbb{R}^{n})\times\mathcal{M}^1 $  is the space endowed with the norm   
\begin{equation} \label{norm}
\begin{aligned}
\left\Vert \vecc{\Psi}\right\Vert _{\mathrm{H}}^{2} = \Vert
\nabla v\Vert _{L^{2}(\mathbb{R}^{n})}^{2}+\Vert \nabla (\psi+\tau v)\Vert
_{L^{2}(\mathbb{R}^{n})}^{2} +\Vert v+\tau w\Vert _{L^{2}(\mathbb{R}%
^{n})}^{2}+\Vert \nabla \eta\Vert^{2}_{L^2, -g'}. 
\end{aligned}
\end{equation}
Keeping in mind how $\mathdutchcal{M}[v, w, \vecc{U}](t)$ is defined, we can estimate it in three steps corresponding to the right-hand side terms; cf. \eqref{Def_Mcal}. We begin with the $\Vert \nabla ^{j}\vecc{\Psi}( t )
\Vert _{\mathrm{H}}$ term.
\newtheorem*{prop4}{Step I}
\begin{prop4}
Under the assumptions of Theorem~\ref{Thm_Mcal}, the following estimate holds:  
\begin{equation}\label{Main_estimate_last}
\begin{aligned}   
 \Vert \nabla^j\vecc{\Psi}(t)\Vert_{\mathrm{H}}\lesssim &~
(\Vert\vecc{U}_0\Vert_{L^1}+\Vert\vecc{U}_0\Vert_{H^s})\left( 1+t\right) ^{-n/4-j/2}\\
&+\left(\mathdutchcal{M}^2[v, w, \vecc{U}](t)
+\mathdutchcal{M}[v, w, \vecc{U}](t) \|\vecc{\Psi}\|_{\mathbbm{E},t} \right) \left( 1+t\right)   
^{-n/4-j/2}
\end{aligned}
\end{equation}
for all $j \in \{0, \ldots, s_0\}$.
\end{prop4}
\begin{proof}
 By applying  the operator $\nabla^\kappa$ to the mild solution \eqref{IP_Psi0} of the problem and then taking the norm in $\mathrm{H}$,  we find that
\begin{equation}
\begin{aligned}
\Vert \nabla^j\vecc{\Psi}(t)\Vert_{\mathrm{H}}\leq&\, \Vert \nabla^je^{t%
\mathcal{A}}\vecc{\Psi}_{0}\Vert_{\mathrm{H}}+\int_0^t \Vert
\nabla^je^{(t-\sigma)\mathcal{A}}\mathbb{F}(\vecc{\Psi},\nabla \vecc{\Psi}%
)(\sigma)\Vert_{\mathrm{H}} \, \textup{d}\sigma := \mathrm{J}_1+\mathrm{J}_2. 
\end{aligned}
\end{equation}
At this point we can relay on the decay rates for the linear problem. In particular, we use \eqref{decay_loss} with $j+\ell=s.$ Since $-(s-j)/2 \leq -n/4-j/2$, we obtain 
 \begin{equation}\label{Linear_Estimate_Main}
\mathrm{J}_1=\Vert \nabla^je^{t%
\mathcal{A}}\vecc{\Psi}_{0}\Vert_{\mathrm{H}} \lesssim \left(\Vert\vecc{U}_{0}\Vert _{L^{1}}+\Vert\vecc{U}_{0}\Vert _{H^{s}}\right)(1+t)^{-{n}/{4-{j}/{2}}}.    
\end{equation}
To estimate $\mathrm{J}_2$, it is convenient to write it as a sum of two integrals over $[0,t/2]$ and $[t/2,t]$, and
treat each of them separately:
\begin{equation} 
\begin{aligned}
\mathrm{J}_2=&\, \int_0^{t/2} \Vert
	\nabla^je^{(t-\sigma )\mathcal{A}}\mathbb{F}(\vecc{\Psi},\nabla \vecc{\Psi}%
	)(\sigma)\Vert_{\mathrm{H}} \, \textup{d}\sigma+\int_{t/2}^{t} \Vert
	\nabla^je^{(t-\sigma )\mathcal{A}}\mathbb{F}(\vecc{\Psi},\nabla \vecc{\Psi}%
	)(\sigma)\Vert_{\mathrm{H}} \, \textup{d}\sigma.
\end{aligned}
\end{equation}
\paragraph{\bf Integral over $\boldsymbol{[0, t/2]}$:} Using the decay rate on $\|\vecc{\Psi}\|_{\mathrm{H}}$ for the linear problem now with $\ell=j+n$, we have
\begin{equation} \label{Estimate_Decay_1}        
\begin{aligned}
&\int_0^{t/2} \Vert
\nabla^je^{(t-\sigma)\mathcal{A}}\mathbb{F}(\vecc{\Psi},\nabla \vecc{\Psi}%
)(\sigma)\Vert_{\mathrm{H}} \, \textup{d}\sigma\\
 \lesssim &\,
\int_0^{t/2} (1+t-\sigma)^{-n/4-j/2}\Vert \mathbb{F}(\vecc{\Psi},\nabla \vecc{\Psi}%
)(\sigma)\Vert_{L^1} \, \textup{d}\sigma  \\
&+ \int_0^{t/2} (1+t-\sigma)^{-n/2-j/2}\Vert \nabla^{2j+n} \mathbb{F}(\vecc{\Psi},\nabla \vecc{\Psi}
)(\sigma)\Vert_{L^2}\, \textup{d}\sigma
=:\,\mathrm{J}_{21}+\mathrm{J}_{22}.  
\end{aligned}
\end{equation}
Using H\"older's inequality yields
\begin{equation}  
\begin{aligned}
\Vert \mathbb{F}(\vecc{\Psi},\nabla \vecc{\Psi}%
)\Vert_{L^1}\lesssim\, \|vw\|_{L^1}+\|\nabla \psi \cdot \nabla v\|_{L^1} 
\lesssim\, \Vert\vecc{U}\Vert_{L^2}^2+\Vert
v\Vert_{L^2}^2.
\end{aligned}
\end{equation}
Therefore, \[\Vert\vecc{U}(\sigma)\Vert_{L^2}+\Vert v(\sigma)\Vert_{L^2}\lesssim (1+\sigma)^{-n/4}\mathdutchcal{M}[v, w, \vecc{U}](\sigma)\lesssim (1+\sigma)^{-n/4}\mathdutchcal{M}[v, w, \vecc{U}](t).\] 
Hence, we further have
\begin{equation}    \label{J_1_Estimate_I}
\begin{aligned}
\mathrm{J}_{21} \lesssim    &\,\mathdutchcal{M}^2[v, w, \vecc{U}](t)\int_0^{t/2} (1+t-\sigma)^{-n/4-j/2}(1+\sigma)^{-n/2} \, \textup{d}\sigma\\
\lesssim& \, \mathdutchcal{M}^2[v, w, \vecc{U}](t)(1+t)^{-n/4-j/2} \int_0^{t/2} (1+\sigma)^{-n/2} \, \textup{d}\sigma \\
\lesssim&\, \mathdutchcal{M}^2[v, w, \vecc{U}](t)(1+t)^{-n/4-j/2},
\end{aligned}
\end{equation}
on account of our assumptions on $n$. Next we wish to estimate $\mathrm{J}_{22}$. We have for $\kappa=2j+n$, 
  \begin{equation}
\begin{aligned}
\Vert \nabla^\kappa \mathbb{F}(\vecc{\Psi},\nabla \vecc{\Psi}%
)\Vert_{L^2}\lesssim&\,  \Vert \nabla^\kappa (v
w)\Vert_{L^2}+\Vert \nabla^\kappa (\nabla \psi \nabla v)\Vert_{L^2} \\
\lesssim & \, \Vert \nabla^\kappa (v (v+\tau w))\Vert_{L^2}+ \Vert \nabla^\kappa (v^2
)\Vert_{L^2} +\Vert \nabla^\kappa (\nabla \psi \nabla v)\Vert_{L^2}.
\end{aligned}
\end{equation}
This further yields 
\begin{equation}\label{L_2_Estimate_F}
\begin{aligned}
&\Vert \nabla^{n+2j} \mathbb{F}(\vecc{\Psi},\nabla \vecc{\Psi}%
)(\sigma)\Vert_{L^2} \\
\lesssim&\, \begin{multlined}[t]\Vert v\Vert_{L^\infty}\left(\Vert \nabla^{n+2j} (v+\tau
w)\Vert_{L^2}+\Vert \nabla^{n+2j} v\Vert_{L^2}\right) \vspace{0.2cm}\\
+\Vert v+\tau w\Vert_{L^\infty}\Vert \nabla^{n+2j} v\Vert_{L^2}+\Vert\nabla v
\Vert_{L^\infty}\Vert \nabla^{n+2j}\nabla v\Vert_{L^2} \vspace{0.2cm}\\
+\Vert\nabla v \Vert_{L^\infty}\Vert \nabla^{n+2j}\nabla (\psi+\tau v)\Vert_{L^2}
+\Vert\nabla (\psi+\tau v) \Vert_{L^\infty}\Vert \nabla^{2j+n}\nabla v \Vert_{L^2} \end{multlined}
\vspace{0.2cm}\\
\lesssim&\,\begin{multlined}[t] (1+\sigma)^{-n/2}(M_0[\vecc{U}](\sigma)+M_0[v](\sigma))\Vert\nabla^{n+2j}\vecc{U}\Vert_{L^2} \vspace{0.2cm}\\   
{+ (1+\sigma)^{-n/2}(M_0[\vecc{U}](\sigma)+M_0[v](\sigma))\Vert \nabla^{2j+n}  v \Vert_{L^2}};\end{multlined}
\end{aligned}
\end{equation}
see inequality \eqref{First_inequaliy_Guass}. Recalling how the norm $\|\cdot\|_{\mathbbm{E},t}$ is defined in \eqref{Energy_Main}, we obtain 
\begin{equation}
\begin{aligned}
\Vert\nabla^{n+2j}\vecc{U}(\sigma)\Vert_{L^2} \lesssim& \, \Vert \nabla^n\vecc{U}(\sigma)\Vert_{H^{s-2n}} \hspace*{1.5cm} \text{if } s\geq 2n+2j  \\[1mm]
\lesssim&\, |||\nabla^n\vecc{\Psi}(\sigma) |||_{H^{s-2n+1}}\\
\lesssim&\, |||\nabla^{n-1}\vecc{\Psi} (\sigma)|||_{H^{s-2(n-1)}}\\
 \lesssim&\, (1+\sigma)^{-n/2+3/4}  \|\vecc{\Psi}\|_{\mathbbm{E},\sigma}\\
 \lesssim&\, (1+\sigma)^{-n/2+3/4}  \|\vecc{\Psi}\|_{\mathbbm{E},t}. 
\end{aligned}   
\end{equation}
Similarly, we have 
\begin{equation}
\begin{aligned}
\Vert \nabla^{n+2j} v(\sigma)\Vert_{L^2}\lesssim&\, \Vert \nabla^{n+2j}w(\sigma)\Vert_{L^2} +\Vert \nabla^{n+2j}(v+\tau w)(\sigma)\Vert_{L^2}\\
\lesssim &\, \Vert \nabla^{n+2j}w(\sigma)\Vert_{L^2}+\Vert\nabla^{n+2j}\vecc{U}(\sigma)\Vert_{L^2} \\
\lesssim &\, |||\nabla^n\vecc{\Psi}(\sigma) |||_{H^{s-2n+1}}\\
\lesssim &\,\Vert \nabla^{n-1}\vecc{\Psi}(\sigma)\Vert_{H^{s-2(n-1)}}.
\end{aligned}
\end{equation}
Since $n \geq 3$, we further have
\begin{equation}\label{J_2_Estimate_I}
\begin{aligned}
&\int_0^{t/2} (1+t-\sigma)^{-n/2-j/2}\Vert \nabla^{2j+n} \mathbb{F}(\vecc{\Psi},\nabla \vecc{\Psi}%
)(\sigma)\Vert_{L^2}\, \textup{d}\sigma \\  
\lesssim&\,(M_0[\vecc{U}](t)+M_0[v](t)+M_1[\vecc{U}](t))\|\vecc{\Psi}\|_{\mathbbm{E},t}\int_0^{t/2} (1+t-\sigma)^{-n/2-j/2}(1+\sigma)^{-n/2}\, \textup{d}\sigma\\
  \lesssim&\, (M_0[\vecc{U}](t)+M_0[v](t)+M_1[\vecc{U}](t))\|\vecc{\Psi}\|_{\mathbbm{E},t}(1+t)^{-n/4-j/2},
\end{aligned}
\end{equation}
where we have used
 \begin{equation}
\begin{aligned}
&\int_0^{t/2} (1+t-\sigma)^{-n/2-j/2}(1+\sigma)^{-n/2}d\sigma\\
\leq&\, (1+t/2)^{-n/4-j/2}\int_0^{t/2} (1+t-\sigma)^{-n/4}(1+\sigma)^{-n/2}d\sigma \lesssim  (1+t/2)^{-n/4-j/2}.
\end{aligned}  
\end{equation}
see \cite[Lemma 7.4]{RR2015} for similar arguments. Thanks to the bounds derived above, we find that
 \begin{equation}\label{I_1_1_Estimate_Main}
 \begin{aligned}
&\int_0^{t/2} \Vert
\nabla^je^{(t-r )\mathcal{A}}\mathbb{F}(\vecc{\Psi},\nabla \vecc{\Psi}%
)(\sigma)\Vert_{\mathrm{H}} \, \textup{d}\sigma\\
\lesssim&\, \big(\mathdutchcal{M}^2[v, w, \vecc{U}]+(M_0(t)+M_0[v](t)+M_1[\vecc{U}](t))\|\vecc{\Psi}\|_{\mathbbm{E},t}\big)(1+t)^{-n/4-j/2}. 
\end{aligned}
\end{equation}
~\vspace*{1mm}
\paragraph{\bf Integral over $\boldsymbol{[t/2, t]}$:}  Next, we estimate the integral over $[t/2, t]$ in \eqref{I2}. By applying the linear decay rate \eqref{v_L_2_Estimate} with $j=1$ and $\ell=n$ and with $\nabla^{j-1}\mathbb{F}(\vecc{\Psi},\nabla \vecc{\Psi}%
)$ instead of $\vecc{U}_0$,  we obtain 
\begin{equation}  
\begin{aligned}
\int_{t/2}^t \Vert
\nabla^je^{(t-r )\mathcal{A}}\mathbb{F}(\vecc{\Psi},\nabla \vecc{\Psi}%
)(\sigma)\Vert_{\mathrm{H}} \, \textup{d}\sigma  =&\,\int_{t/2}^t\left\Vert
\nabla e^{(t-r )\mathcal{A}}\nabla^{j-1}\mathbb{F}(\vecc{\Psi},\nabla \vecc{\Psi}%
)(\sigma)\right\Vert_{\mathrm{H}} \, \textup{d}\sigma\\
\lesssim&\,
\int_{t/2}^t (1+t-\sigma)^{-n/  4-1/2}\Vert\nabla^{j-1}\mathbb{F}(\vecc{\Psi},\nabla \vecc{\Psi}%
)(\sigma) \Vert_{L^1}\, \textup{d}\sigma\\
&+\int_{t/2}^t (1+t-\sigma)^{-n/2}\Vert\nabla^{j+n}\mathbb{F}(\vecc{\Psi},\nabla \vecc{\Psi}%
)(\sigma) \Vert_{L^2}\, \textup{d}\sigma\\
=:&\, \mathrm{J}_{23}+\mathrm{J}_{24}.
\end{aligned}
\end{equation}
We proceed to estimate the two terms on the right. We have 
  \begin{equation}
  \begin{aligned}
  \Vert\nabla^{j-1}\mathbb{F}(\vecc{\Psi},\nabla \vecc{\Psi}%
)(\sigma) \Vert_{L^1}\lesssim&\, \Vert \nabla^{j-1} (vw)\Vert_{L^1}+\Vert \nabla^{j-1} (\nabla \psi \cdot \nabla v )\Vert_{L^1}\\
\lesssim&\,\Vert v\Vert_{L^2}\Vert\nabla^{j-1}w \Vert_{L^2}+ \Vert w\Vert_{L^2}\Vert\nabla^{j-1}v \Vert_{L^2}\\
&+\Vert \nabla \psi\Vert_{L^2}\Vert\nabla^{j}v \Vert_{L^2}+\Vert \nabla v\Vert_{L^2}\Vert\nabla^{j}\psi \Vert_{L^2},
  \end{aligned}
\end{equation}  
where we have relied on the inequality \eqref{First_inequaliy_Guass}. Using 
\begin{equation}\label{Ineq_w_U_v}
\Vert\nabla^{j-1}w \Vert_{L^2}\lesssim \Vert \nabla^{j-1}\vecc{U}\Vert_{L^2}+ \Vert \nabla^{j-1} v\Vert_{L^2},
\end{equation}
and recalling the definition of $\mathdutchcal{M}[v, w, \vecc{U}]$ in \eqref{Def_Mcal}, we obtain 
 \begin{equation}
  \begin{aligned}
  \Vert\nabla^{j-1}\mathbb{F}(\vecc{\Psi},\nabla \vecc{\Psi}%
)(\sigma) \Vert_{L^1}\lesssim&\, \begin{multlined}[t](1+t)^{-n/4}(1+t)^{-n/4-(j-1)/2}\mathdutchcal{M}^2[v, w, \vecc{U}](t)\\    
+(1+t)^{-n/4-1/2}(1+t)^{-n/4-(j-1)/2}\mathdutchcal{M}^2[v, w, \vecc{U}](t)\\  
+(1+t)^{-n/4}( 1+t
) ^{-n/4-j/2}\mathdutchcal{M}^2[v, w, \vecc{U}](t) \end{multlined}\\  
\lesssim&\, (1+t)^{-n/2-(j-1)/2}\mathdutchcal{M}^2[v, w, \vecc{U}](t)  
\end{aligned}
\end{equation}  
for $j\leq s_0$. On account of our assumption on $n$, we then have
\begin{equation}  \label{J_3_Estimate_Main_1}
\begin{aligned}  
\mathrm{J}_{23}\lesssim & \,\mathdutchcal{M}^2[v, w, \vecc{U}](t)\int_{t/2}^t (1+t-\sigma)^{-n/  4-1/2}(1+\sigma)^{-n/2-(j-1)/2}\, \textup{d}\sigma\\
\lesssim&\,\mathdutchcal{M}^2[v, w, \vecc{U}](t) (1+t)^{-n/2-(j-1)/2}\int_{t/2}^t (1+t-\sigma)^{-n/  4-1/2}\, \textup{d}\sigma\\
\lesssim&\,\mathdutchcal{M}^2[v, w, \vecc{U}](t) (1+t)^{-n/2-(j-1)/2}\\  
\lesssim&\,\mathdutchcal{M}^2[v, w, \vecc{U}](t) (1+t)^{-n/4-j/2}. 
\end{aligned}    
\end{equation}  
\indent Now, to estimate $\mathrm{J}_{24}$, we first use \eqref{L_2_Estimate_F} for $\kappa=j+n$. In this manner, we obtain    
\begin{equation}\label{L_2_Estimate_F_2}
\begin{aligned}
\Vert \nabla^\kappa \mathbb{F}(\vecc{\Psi},\nabla \vecc{\Psi}%
)(\sigma)\Vert_{L^2} 
\lesssim&\, (1+t)^{-n/2}(M_0[\vecc{U}](t)+M_0[v](t))\Vert\nabla^{n+j}\vecc{U}\Vert_{L^2}\\
& + (1+t)^{-n/2-1/2}M_1[\vecc{U}](t)\Vert\nabla^{n+j}\vecc{U}\Vert_{L^2}\\
\lesssim&\, (M_0[\vecc{U}](t)+M_0[v](t)+M_1[\vecc{U}](t))(1+t)^{-n/2}\Vert\nabla^{n+j}\vecc{U}\Vert_{L^2}. 
\end{aligned}
\end{equation}
For $j\leq [s/2]-n-1 $, we have
\begin{equation}   
\begin{aligned}  
\Vert\nabla^{n+j}\vecc{U}\Vert_{L^2}\leq\Vert\nabla^{n+j}\vecc{U}\Vert_{H^{s-2j-2n}}\lesssim &\,\|\vecc{\Psi}\|_{\mathbbm{E},t}(1+t)^{-n/2-j/2+1/2}\\
\lesssim&\,\|\vecc{\Psi}\|_{\mathbbm{E},t}(1+t)^{-n/4-j/2}  
 \end{aligned}
\end{equation}  
because $n \geq 3$. Hence,
 \begin{equation}\label{J_4_Estimate_Main_1}
\begin{aligned}      
\mathrm{J}_{24}\lesssim &\,(M_0[\vecc{U}](t)+M_0[v](t)+M_1[\vecc{U}](t))\|\vecc{\Psi}\|_{\mathbbm{E},t}\int_{t/2}^t (1+t-\sigma)^{-n/2}(1+t)^{-3n/4-j/2}\, \textup{d} \sigma\\
\lesssim&\,(M_0[\vecc{U}](t)+M_0[v](t)+M_1[\vecc{U}](t))\|\vecc{\Psi}\|_{\mathbbm{E},t}(1+t)^{-n/4-j/2}. 
\end{aligned}
\end{equation}   
By adding estimate \eqref{J_3_Estimate_Main_1} to the above inequality, we obtain 
\begin{equation}
\begin{aligned}
&\int_{t/2}^t \Vert
\nabla^je^{(t-r )\mathcal{A}}\mathbb{F}(\vecc{\Psi},\nabla \vecc{\Psi}%
)(\sigma)\Vert_{\mathrm{H}} \, \textup{d}\sigma \\
\lesssim&\, \left(\mathdutchcal{M}^2[v, w, \vecc{U}](t)+(M_0[\vecc{U}](t)+M_0[v](t)+M_1[\vecc{U}](t))\|\vecc{\Psi}\|_{\mathbbm{E},t}\right)(1+t)^{-n/4-j/2}\\
\lesssim&\, \left(\mathdutchcal{M}^2[v, w, \vecc{U}](t)+\mathdutchcal{M}^2[v, w, \vecc{U}](t)\|\vecc{\Psi}\|_{\mathbbm{E},t}\right)(1+t)^{-n/4-j/2}.
\end{aligned}
\end{equation}
Collecting the derived estimates completes Step I.
\end{proof}

\newtheorem*{prop6}{Step II} 
\begin{prop6}
Under the assumptions of Theorem~\ref{Thm_Mcal}, the following inequality holds for all $j\in\{0,\dots, s_0-1\}$:
\begin{equation}  \label{Main_estimate_last_3}
\begin{aligned}   
\left\Vert \nabla^{j}w( t )  
\right\Vert _{L^2}\lesssim &\,
(\Vert \nabla^j \psi_2 
\Vert _{L^2} +\Vert\vecc{U}_0\Vert_{L^1}+  \Vert\vecc{U}_0\Vert_{H^s})\left( 1+t\right) ^{-n/4-1/2-j/2}\\
&+\left(\mathdutchcal{M}^2[v, w, \vecc{U}]   
+\mathdutchcal{M}[v, w, \vecc{U}](t) \|\vecc{\Psi}\|_{\mathbbm{E},t} \right)  \left( 1+t\right)
^{-n/4-1/2-j/2}.
\end{aligned}
\end{equation}
\end{prop6}
\begin{proof}
We first prove the estimate when $j=0$. We multiply the third equation in \eqref{Main_System} by $w$ and integrate with respect to space to arrive at
\begin{equation}\label{w_Eenrgy_Nonl}
\begin{aligned}
\frac{{\tau}}{2}\frac{\textup{d}}{\textup{d}t}\Vert w\Vert_{L^2} ^{2}+\Vert w\Vert_{L^2} ^{2}\lesssim&\, \Vert \Delta \psi\Vert_2^2+ \Vert\Delta \eta\Vert^{2}_{L^2, g}+\int_{\R^n}|( vw+\nabla \psi \cdot \nabla v)w | \dx\\
\lesssim&\, \Vert \Delta (\psi+\tau v)\Vert_{L^{2}}^{2}+\Vert \Delta v\Vert_{L^{2}}^{2}+\Vert\Delta \eta\Vert^{2}_{L^2, g}\\
&+\epsilon \Vert w\Vert_{L^2}^2+ \frac{1}{4\epsilon}\Vert  vw+\nabla \psi \cdot \nabla v\Vert_2^2, 
\end{aligned}  
\end{equation}  
with $\epsilon>0$. Let $C>0$ be the hidden constant in the above estimate. Then
\begin{equation}
\begin{aligned}
\frac{{\tau}}{2}\frac{\textup{d}}{\textup{d}t}\Vert w\Vert_{L^2} ^{2}+(1-\varepsilon C )\Vert w\Vert_{L^2} ^{2}\lesssim&\,\Vert \nabla\vecc{\Psi}\Vert_{\mathrm{H}}^2+\Vert  vw+\nabla \psi \cdot \nabla v\Vert_{L^2}^2\\
\lesssim &\, \Vert \nabla\vecc{\Psi}\Vert_{\mathrm{H}}^2+\Vert v\Vert_{L^\infty}^2\Vert w\Vert_{L^2}^2+\Vert \nabla \psi\Vert_{L^\infty}^2\Vert \nabla v\Vert_{L^2}^2.
\end{aligned}   
\end{equation}
We fix $\epsilon>0$ such that $1-C\epsilon\geq 1/2$ and multiply the above inequality by $e^{\frac{1}{{\tau}} t}$. In this manner, we arrive at
\begin{equation}
\begin{aligned}
\frac{\textup{d}}{\textup{d}t} \left(e^{\frac{1}{{\tau}} t}\Vert w\Vert_{L^2} ^{2}\right)\lesssim {\frac{1}{\tau}}e^{\frac{1}{{\tau}} t}\left(\Vert \nabla\vecc{\Psi}\Vert_{\mathrm{H}}^2+\Vert v\Vert_{L^\infty}^2\Vert w\Vert_{L^2}^2+\Vert \nabla \psi\Vert_{L^\infty}^2\Vert \nabla v\Vert_{L^2}^2\right).     
\end{aligned}
\end{equation}
By then integrating with respect to time, we obtain 
\begin{equation}\label{Energy_First_Order}
\begin{aligned}
 \Vert w\Vert_{L^2} ^{2}\lesssim&\, \begin{multlined}[t] \ e^{-\frac{1}{{\tau}}t}\Vert \psi_2 \Vert_{L^2} ^{2}+{\frac{1}{\tau}}\int_0^t e^{-{\frac{1}{\tau}}(t-\sigma)}\left(\Vert \nabla\vecc{\Psi}(\sigma)\Vert_{\mathrm{H}}^2+\Vert v(\sigma)\Vert_{L^\infty}^2\Vert w(\sigma)\Vert_{L^2}^2\right.\\
 \left.+\Vert \nabla \psi(\sigma)\Vert_{L^\infty}^2\Vert \nabla v(\sigma)\Vert_{L^2}^2\right)\, \textup{d}\sigma. \end{multlined}
\end{aligned}
\end{equation}
Applying the decay rate \eqref{Main_estimate_last} from Step I yields
\begin{equation}
\begin{aligned}   
\Vert \nabla\vecc{\Psi}\Vert_{\mathrm{H}}\lesssim&\, (\Vert\vecc{U}_0\Vert_{L^1}+\Vert\vecc{U}_0\Vert_{H^s})\left( 1+t\right) ^{-n/4-1/2}\\
&+\left(\mathdutchcal{M}^2[v, w, \vecc{U}](t)
+\mathdutchcal{M}[v, w, \vecc{U}](t) \|\vecc{\Psi}\|_{\mathbbm{E},t} \right) \left( 1+t\right)   
^{-n/4-1/2}. 
\end{aligned}
\end{equation}  
Additionally using an elementary integral inequality 
 \begin{equation}
\int_0^t e^{-\gamma(t-\sigma)}(1+\sigma)^{-\beta}\, \textup{d} \sigma\lesssim (1+t)^{-\beta},   
\end{equation}
cf. \eqref{Expon_Poly_Ineq}, leads to
\begin{equation}\label{Phi_w_Estimate}
\begin{aligned}   
\int_0^t e^{-{\frac{1}{\tau}}(t-\sigma)}\Vert \nabla\vecc{\Psi}(\sigma)\Vert_{\mathrm{H}}^2\, \textup{d}\sigma
\lesssim\,\begin{multlined}[t] (\Vert\vecc{U}_0\Vert_{L^1}+\Vert\vecc{U}_0\Vert_{H^s})^2\left( 1+t\right) ^{-n/2-1}\\
+\left(\mathdutchcal{M}^2[v, w, \vecc{U}](t)
+\mathdutchcal{M}[v, w, \vecc{U}](t) \|\vecc{\Psi}\|_{\mathbbm{E},t} \right)^2 \left( 1+t\right)   
^{-n/2-1}. \end{multlined}
\end{aligned}   
\end{equation}
On the other hand, by using the estimates 
\begin{alignat}{2}
\|v\|_{L^\infty}\lesssim&\,  (1+t)^{-n/2}M_0[v](t), \qquad
&&\Vert w\Vert_{L^2}\lesssim\, (1+t)^{-n/4-1/2}\mathdutchcal{M}[v, w, \vecc{U}](t),\\
\Vert \nabla v\Vert_{L^2}\lesssim&\,(1+t)^{-n/4-1/2}\mathdutchcal{M}[v, w, \vecc{U}](t), \qquad
&&\Vert \nabla   \psi\Vert_{L^\infty}\lesssim\, (1+t)^{-n/2}M_0[\vecc{U}](t),
\end{alignat}
together with again \eqref{Expon_Poly_Ineq}, we obtain 
\begin{equation}\label{Second_w_Estimate}
\begin{aligned}
\int_0^t &e^{-{\frac{1}{\tau}}(t-\sigma)} \Vert v(\sigma)\Vert_{L^\infty}^2\Vert w(\sigma)\Vert_{L^2}^2\, \textup{d}\sigma \\
\lesssim&\, (M_0[v](t))^2\big(\mathdutchcal{M}[v, w, \vecc{U}](t)\big)^2\int_0^te^{-{\frac{1}{\tau}}(t-\sigma)}(1+\sigma)^{-\frac{3n}{2}-1}\, \textup{d}\sigma \\
\lesssim&\,(M_0[v](t))^2\big(\mathdutchcal{M}[v, w, \vecc{U}](t)\big)^2 (1+t)^{-\frac{3n}{2}-1}\\
\lesssim&\,(M_0[v](t))^2\big(\mathdutchcal{M}[v, w, \vecc{U}](t)\big)^2 (1+t)^{-\frac{n}{2}-1}. 
\end{aligned}
\end{equation}  
Similarly, we have 
 \begin{equation}\label{Third_w_Estimate}
\begin{aligned}
\int_0^t &e^{-{\frac{1}{\tau}}(t-\sigma)} \Vert \nabla \psi(\sigma)\Vert_{L^\infty}^2\Vert \nabla v(\sigma)\Vert_{L^2}^2 \, \textup{d}\sigma\\
\lesssim&\, (M_0[\vecc{U}](t))^2\big(\mathdutchcal{M}[v, w, \vecc{U}](t)\big)^2\int_0^te^{-{\frac{1}{\tau}}(t-\sigma)}(1+\sigma)^{-(n+1)}\, \textup{d}\sigma \\
\lesssim&\,(M_0[\vecc{U}](t))^2\big(\mathdutchcal{M}[v, w, \vecc{U}](t)\big)^2(1+t)^{-\frac{n}{2}-1}. 
\end{aligned}
\end{equation}
By collecting the derived bounds and making use of Lemma \ref{M_estimate}, we obtain estimate \eqref{Main_estimate_last_3} when $j=0$.\\
\indent We next wish to derive the corresponding bound for $j \geq 1$. To this end,  we apply the operator $\nabla^j,\, j\geq 1$ to the third equation in \eqref{Main_System}, and multiply the resulting equation  by $\nabla^j w$. Similarly to \eqref{Energy_First_Order}, we obtain 
\begin{equation}\label{Energy_higher_Order_w}
\begin{aligned}
 \Vert \nabla^jw\Vert_{L^2} ^{2}\lesssim&\, \begin{multlined}[t] \ e^{-{\frac{1}{\tau}}t}\Vert \nabla^j \psi_2\Vert_{L^2} ^{2}+\int_0^t e^{-{\frac{1}{\tau}}(t-\sigma)}\Big(\Vert \nabla^{j+1}\vecc{\Psi}(\sigma)\Vert_{\mathrm{H}}^2+\Vert \nabla^j( vw+\nabla \psi \cdot \nabla v)\Vert_{L^2}^2\Big)\, \textup{d}\sigma. \end{multlined}
\end{aligned}
\end{equation}
We can further estimate the right-hand side by noting that
\begin{equation}\label{Prod_1}
\begin{aligned}
\Vert \nabla^j( vw)\Vert_{L^2}\lesssim&\, \Vert w\Vert_{L^\infty} \Vert \nabla^jv\Vert_{L^2}+\Vert v\Vert_{L^\infty} \Vert \nabla^jw\Vert_{L^2}\\
\lesssim&\, (\Vert v+\tau w\Vert_{L^\infty}+\Vert v\Vert_{L^\infty} )\Vert \nabla^jv\Vert_{L^2}+\Vert v\Vert_{L^\infty} \Vert \nabla^jw\Vert_{L^2}\\
\lesssim&\,(M_0[\boldsymbol{\vecc{U}}](t)+M_0[v](t)) (1+t)^{-n/2}\mathdutchcal{M}[v, w, \vecc{U}](t)(1+t)^{-n/4-j/2}\\
&+M_0[v](t) (1+t)^{-n/2}\mathdutchcal{M}[v, w, \vecc{U}](t)(1+t)^{-n/4-j/2-1/2}\\
\lesssim&\,\Big(M_0[\boldsymbol{\vecc{U}}](t)+M_0[v](t)\Big)\mathdutchcal{M}[v, w, \vecc{U}](1+t)^{-n/4-j/2-1/2},
\end{aligned}
\end{equation}
where we have utilized the well-known product estimate \eqref{First_inequaliy_Guass}. Similarly, we have 
\begin{equation}\label{Prod_2}
\begin{aligned}
\Vert \nabla^j( \nabla \psi \cdot \nabla v)\Vert_{L^2}\lesssim&\, \Vert \nabla \psi\Vert_{L^\infty}\Vert \nabla^{j+1} v\Vert_{L^2} +\Vert \nabla v\Vert_{L^\infty}\Vert \nabla^{j+1} \psi\Vert_{L^2}\\
\lesssim&\, M_0[\boldsymbol{\vecc{U}}](t)(1+t)^{-n/2}\mathdutchcal{M}[v, w, \vecc{U}](1+t)^{-n/4-j/2}\\
\lesssim&\,M_0[\boldsymbol{\vecc{U}}](t)(1+t)^{-n/4-j/2-1/2}. 
\end{aligned}
\end{equation}
Furthermore, we have
\begin{equation}
\begin{aligned}
\Vert \nabla^{j+1}\vecc{\Psi}(\sigma)\Vert_{\mathrm{H}}\lesssim&\,\begin{multlined}[t] (\Vert\vecc{U}_0\Vert_{L^1}+\Vert\vecc{U}_0\Vert_{H^s})^2\left( 1+t\right) ^{-n/4-1/2-j/2}\\
+\left(\mathdutchcal{M}^2[v, w, \vecc{U}](t)  
+\mathdutchcal{M}[v, w, \vecc{U}](t) \|\vecc{\Psi}\|_{\mathbbm{E},t} \right)^2 \left( 1+t\right)   
^{-n/4-1/2-j/2}.  \end{multlined}
\end{aligned}      
\end{equation}
By combining the derived bounds, the estimate follows analogously to the case $j=0$. We omit the details. 
\end{proof}
\newtheorem*{prop5}{Step III}
\begin{prop5}
Under the assumptions of Theorem~\ref{Thm_Mcal}, the following bound holds:
	\begin{equation}\label{Main_estimate_last_2}
	\begin{aligned} 
	\left\Vert \nabla^j v( t )
	\right\Vert _{L^2}\lesssim &~
	(\Vert \nabla^j \psi_2  
\Vert _{L^2}+\Vert\vecc{U}_0\Vert_{L^1}+\Vert\vecc{U}_0\Vert_{H^{s}})\left( 1+t\right) ^{-n/4-j/2
	 }\\
	&+\left(\mathdutchcal{M}^2[v, w, \vecc{U}](t)
	+\mathdutchcal{M}[v, w, \vecc{U}](t)\|\vecc{\Psi}\|_{\mathbbm{E},t} \right) \left( 1+t\right)
	^{-n/4-j/2}  
	\end{aligned}
	\end{equation}
	for $j \in \{0, \ldots, s_0-1\}$.
\end{prop5}
\begin{proof}
By utilizing the estimate
\begin{equation}  
\begin{aligned}
\Vert \nabla^jv\Vert_{L^2}\lesssim\, \Vert \nabla^jw\Vert_{L^2}+\Vert\nabla^j( v+\tau w)\Vert_{L^2}
\lesssim\,\Vert \nabla^j w\Vert_{L^2}+\left\Vert  \nabla^j\vecc{\Psi} ( t )
	\right\Vert _{\mathrm{H}},
\end{aligned}
\end{equation}
the claim immediately follows from the bounds derived in Steps I and II. \\
\indent Steps I--III complete the proof of Theorem~\ref{Thm_Main_1_repeat}.
\end{proof} 
 We have thus derived both estimates \eqref{Main_Estimate_E_D_Main} and \eqref{M_weighted_estimate_Main}, which were missing to complete the proof of our main result stated in Theorem~\ref{Main_Theorem}.
\section*{Conclusion and outlook}
\indent In this work, we have investigated the asymptotic behavior of solutions to the Jordan--Moore--Gibson--Thompson equation in inviscid media with type I memory. Due to the critical condition $b= \tau c^2$ satisfied by the medium parameters, the linearized equation's decay estimates are of a regularity-loss type. Such a loss of derivatives prevents the use of classical energy methods in the analysis of the corresponding nonlinear problem. Our approach instead relied on devising appropriate time-weighted norms, which helped introduce artificial damping to the problem. In turn, this damping allowed us to control the loss of derivatives and the problem's nonlinearity. \\
\indent As is the case in the analysis of many nonlinear PDEs, the unpleasant restriction on the initial data size is required. It remains an interesting open question to show  finite-time blow-up for large initial data. It has been numerically observed that for the Kuznetsov equation, formally obtained in the limiting case when $\tau \rightarrow 0^+$ and $g=0$ in \eqref{Main_Equation}, if the sound diffusivity is negligible, gradient blow-up occurs after a certain time; see, for example,~\cite{walsh2007finite_} for the revealing numerical experiments. The theoretical justification of this observation remains an open problem as well.  \\
\indent There are also many other directions of possible future research. Our assumption on the memory kernel $g$ was that of exponential decay. It would be interesting to consider how a polynomially decaying kernel influences the behavior of solutions. Singular limits for vanishing thermal relaxation are expected to lead to nonlocal second-order acoustic models and so their rigorous justification is of interest as well; see, for example,~\cite{bongarti2018singular, chen2020asymptotic} for such studies carried out for MGT equations and~\cite{KaltenbacherNikolic} for the nonlinear JMGT equation without memory.  \\
\indent The presence of the diffusion term $b\Delta \psi_t$ with $b>0$ was crucial in the analysis of equation \eqref{Main_Equation}. It is known that for $b=0$ and in the absence of the memory term, the problem is ill-posed; see~\cite{Fattorini_1983}.  Having in mind nonlinear sound propagation through biological tissues, another important question is to consider fractional diffusion $b(-\Delta)^\alpha$ for $0\leq \alpha\leq 1$. We leave these open questions for future work.
\appendix

\begin{appendices}   
 
\section{ Auxiliary inequalities} \label{AppendixA}
\noindent We gather here technical inequalities that have been frequently used in the preceding sections, as well as the known embedding results and the general Gagliardo--Nirenberg interpolation inequality. 
 \begin{lemma} (See~\cite{Se68})
 	Let $a>0$ and $b>0$. Then, 
 	\begin{equation} \label{Ineq_Segel} \label{First_integral_inequality}
 	\int_{0}^{t}\left( 1+t-s\right) ^{-a}\left( 1+s\right) ^{-b}ds\leq C\left(
 	1+t\right) ^{-\min   \left( a,b\right) }\quad\text{if  }\max (a,b)>1,
 	\end{equation}
 where constant $C>0$ does not depend on $t$.	
 \end{lemma}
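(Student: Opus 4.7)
My plan is to prove the estimate by exploiting a symmetry of the integrand and then splitting the domain of integration at the midpoint $t/2$, controlling the resulting pieces separately.

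First I would note that the change of variables $s \mapsto t-s$ gives
\[
\int_{0}^{t}(1+t-s)^{-a}(1+s)^{-b}\, ds = \int_{0}^{t}(1+u)^{-a}(1+t-u)^{-b}\, du,
\]
so the integral is symmetric in $(a,b)$. This allows me to assume without loss of generality that $a \geq b$, so that $\min(a,b)=b$ and the hypothesis $\max(a,b)>1$ becomes $a>1$.

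Next I would split the integral as $I = I_{1}+I_{2}$ with $I_{1}=\int_{0}^{t/2}$ and $I_{2}=\int_{t/2}^{t}$. On $[0,t/2]$ the factor $1+t-s$ is bounded below by $(1+t)/2$, so $(1+t-s)^{-a}\leq 2^{a}(1+t)^{-a}$ and hence
\[
I_{1}\leq 2^{a}(1+t)^{-a}\int_{0}^{t/2}(1+s)^{-b}\, ds.
\]
I would then distinguish three subcases according to whether $b>1$, $b=1$, or $0<b<1$: in the first the $s$-integral is uniformly bounded, in the second it grows like $\ln(1+t)$, and in the third it grows like $(1+t)^{1-b}$. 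In each subcase the factor $(1+t)^{-a}$ with $a>1$ is strong enough to absorb the growth and leave a bound of the form $C(1+t)^{-b}$; the logarithmic case uses $a>1=b$ strictly, and the third case uses $1-a<0$.

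Symmetrically, on $[t/2,t]$ we have $1+s\geq (1+t)/2$, so
\[
I_{2}\leq 2^{b}(1+t)^{-b}\int_{0}^{t/2}(1+u)^{-a}\, du,
\]
and since $a>1$ the remaining integral is uniformly bounded, giving $I_{2}\leq C(1+t)^{-b}$. Combining $I_{1}$ and $I_{2}$ yields the claimed bound with $\min(a,b)=b$, and restoring symmetry covers the case $b\geq a$. The only subtlety is the borderline subcase $b=1$ where the intermediate estimate picks up a logarithmic factor; this is the step where the strict inequality $\max(a,b)>1$ is genuinely used, and is the point I would treat most carefully.
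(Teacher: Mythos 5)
Your proof is correct. The paper does not supply its own argument for this estimate; it simply cites the reference~\cite{Se68}, so there is no in-paper proof to compare against. Your approach is the standard one: exploit the $s\mapsto t-s$ symmetry to reduce to $a\geq b$ (so $\min(a,b)=b$ and $a>1$), split the integral at $t/2$, and use that on each half the ``far'' factor is comparable to $(1+t)^{-a}$ or $(1+t)^{-b}$. The three subcases for $I_1$ are handled correctly: for $b>1$ the tail integral converges; for $b=1$ the logarithmic growth is absorbed because $a>1$ strictly; for $0<b<1$ the algebraic growth $(1+t)^{1-b}$ is absorbed since $1-a<0$. The bound $I_2\lesssim (1+t)^{-b}$ uses $a>1$ to make the remaining integral uniformly bounded. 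One small point worth making explicit in a polished write-up: the endpoint case $a=b$ is included (then both exceed $1$), and the symmetry reduction is an equality of integrals, so no constant is lost there. Otherwise the argument is complete.
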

\begin{lemma}
For any $t\geq 0$ and $\alpha$, $\sigma$, $\gamma>0$, it holds that   
\begin{equation}\label{Eq_Expo_1}
e^{-\gamma t^{\alpha}}\lesssim \gamma^{-\frac{\sigma}{\alpha}}(1+t)^{-\sigma}.
\end{equation}

\end{lemma}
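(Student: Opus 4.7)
The plan is to reduce \eqref{Eq_Expo_1} to a one-variable calculus fact by rescaling, and then convert the natural bound $t^{-\sigma}$ into the required $(1+t)^{-\sigma}$ by a short case analysis.

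First I would record the elementary identity
\[
\sup_{x\geq 0}\, x^{\sigma/\alpha}\, e^{-x} \;=\; \left(\frac{\sigma}{\alpha\, e}\right)^{\sigma/\alpha} \;=:\; C_{0}(\sigma,\alpha),
\]
obtained by differentiating the map $x\mapsto x^{\sigma/\alpha}e^{-x}$, whose unique critical point on $(0,\infty)$ is $x=\sigma/\alpha$. Substituting $x=\gamma t^{\alpha}$ in this supremum bound and rearranging gives
\[
e^{-\gamma t^{\alpha}} \;\leq\; C_{0}(\sigma,\alpha)\,\gamma^{-\sigma/\alpha}\, t^{-\sigma}\qquad\text{for all }t>0,
\]
which is already the desired decay apart from the replacement of $t^{-\sigma}$ by $(1+t)^{-\sigma}$.

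To complete the proof I would split the range of $t$. On $t\geq 1$ one has $1+t\leq 2t$, hence $t^{-\sigma}\leq 2^{\sigma}(1+t)^{-\sigma}$, and \eqref{Eq_Expo_1} follows directly from the rescaled inequality. On $0\leq t\leq 1$ the quantity $(1+t)^{\sigma}$ is bounded between $1$ and $2^{\sigma}$ and $e^{-\gamma t^{\alpha}}\leq 1$, so the inequality reduces to the trivial estimate $1\lesssim \gamma^{-\sigma/\alpha}$, valid in the convention of Subsection~3.1 where the constant hidden in $\lesssim$ depends only on the fixed parameters $\gamma,\sigma,\alpha$ (and not on $t$). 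Combining the two ranges yields \eqref{Eq_Expo_1}.

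No serious obstacle is anticipated; the only point worth being careful about is keeping track of the scaling. The genuine content of the bound is the factor $\gamma^{-\sigma/\alpha}$, which is sharp in the regime $t\gtrsim \gamma^{-1/\alpha}$, and this is precisely the regime in which \eqref{Eq_Expo_1} is invoked in the paper (e.g.\ in \eqref{Phi_w_Estimate} and \eqref{Second_w_Estimate}, where $\gamma=1/\tau$ is fixed and one trades the exponential $e^{-(t-\sigma)/\tau}$ for integrable polynomial decay).
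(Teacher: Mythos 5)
The paper states this lemma in Appendix~\ref{AppendixA} without any proof, so there is no paper argument to compare against. Your proof is correct and is essentially the canonical one: the supremum identity $\sup_{x\geq 0}x^{\sigma/\alpha}e^{-x}=(\sigma/(\alpha e))^{\sigma/\alpha}$ followed by the substitution $x=\gamma t^\alpha$ gives the clean bound $e^{-\gamma t^\alpha}\leq C_0(\sigma,\alpha)\,\gamma^{-\sigma/\alpha}\,t^{-\sigma}$, which has a $\gamma$-independent constant, and the passage from $t^{-\sigma}$ to $(1+t)^{-\sigma}$ via the split at $t=1$ is routine. You also correctly flag the one genuinely delicate point: on $0\leq t\leq 1$ the inequality forces $1\lesssim \gamma^{-\sigma/\alpha}$, so the implied constant must be allowed to depend on $\gamma$. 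This is indeed consistent with the paper's conventions (the Notation subsection only requires the hidden constant to be independent of time), and it is harmless in the only place the lemma is invoked, namely the proof of \eqref{Expon_Poly_Ineq}, where $\gamma=1/\tau$ is a fixed parameter. One might add for completeness that if the authors wanted a bound uniform in $\gamma$ they should have written, e.g., $e^{-\gamma t^\alpha}\lesssim (1+\gamma^{1/\alpha}t)^{-\sigma}$, which your supremum argument proves with a constant depending only on $\sigma$ and $\alpha$; but as stated and as used, your proof is exactly what is needed.
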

\begin{lemma}  
For any  $\gamma>0$, for any $t\geq 0$ and for any $\beta>0$, the following inequality holds: 
 \begin{equation}\label{Expon_Poly_Ineq}
\int_0^t e^{-\gamma(t-\sigma)}(1+\sigma)^{-\beta}\, \textup{d} \sigma\lesssim (1+t)^{-\beta} .   
\end{equation}
\end{lemma}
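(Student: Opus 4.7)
The approach is the classical split-the-integral trick: write
\[
\int_0^t e^{-\gamma(t-\sigma)}(1+\sigma)^{-\beta}\, \textup{d}\sigma \;=\; \int_0^{t/2} e^{-\gamma(t-\sigma)}(1+\sigma)^{-\beta}\, \textup{d}\sigma \;+\; \int_{t/2}^t e^{-\gamma(t-\sigma)}(1+\sigma)^{-\beta}\, \textup{d}\sigma,
\]
and handle each piece by a different dominating mechanism. On the near-$t$ interval $[t/2, t]$, the polynomial weight is essentially frozen: $(1+\sigma)^{-\beta} \leq 2^{\beta}(1+t)^{-\beta}$. Pulling this factor out and using $\int_{t/2}^t e^{-\gamma(t-\sigma)}\, \textup{d}\sigma \leq 1/\gamma$, I obtain a contribution bounded by $C(\gamma,\beta)(1+t)^{-\beta}$, which is already of the desired form.

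On the far-from-$t$ interval $[0,t/2]$ the roles reverse: the exponential is uniformly small, $e^{-\gamma(t-\sigma)} \leq e^{-\gamma t/2}$, while I control the polynomial crudely by $(1+\sigma)^{-\beta} \leq 1$. This yields
\[
\int_0^{t/2} e^{-\gamma(t-\sigma)}(1+\sigma)^{-\beta}\, \textup{d}\sigma \;\leq\; \tfrac{t}{2}\, e^{-\gamma t/2}.
\]
To convert the exponential decay into polynomial decay of order $\beta$, I invoke the preceding elementary inequality \eqref{Eq_Expo_1} (with $\alpha=1$ and exponent $\beta+1$ in place of $\sigma$), which gives $e^{-\gamma t/2} \lesssim (1+t)^{-(\beta+1)}$. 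Then $\tfrac{t}{2}\, e^{-\gamma t/2} \lesssim (1+t)(1+t)^{-(\beta+1)} = (1+t)^{-\beta}$, and combining with the near-$t$ bound completes the proof.

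There is no real obstacle here; this is a standard tool whose only subtlety lies in ensuring that the hidden constant in $\lesssim$ depends only on $\gamma$ and $\beta$ (and not on $t$), which the split above makes transparent. The lemma feeds directly into the energy analysis of $w$ in Section~\ref{Proof_Decay_Estimates}, where it lets us pass from the exponentially weighted integral $\int_0^t e^{-(t-\sigma)/\tau}\|\cdot\|^2\,\textup{d}\sigma$, arising when one moves the linear damping term $w$ of the third equation in \eqref{Main_System} to the left-hand side, to the polynomial decay rates $(1+t)^{-n/2-1}$ that match the decay of the right-hand side data.
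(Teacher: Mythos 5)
Your proof is correct, but it takes a genuinely different route from the paper's. The paper's argument is a one-liner given the surrounding machinery: it first replaces the exponential kernel by a polynomial one via \eqref{Eq_Expo_1} (choosing the exponent $>\max(1,\beta)$), writing $e^{-\gamma(t-\sigma)}\lesssim(1+t-\sigma)^{-a}$, and then applies the convolution estimate \eqref{Ineq_Segel} to the resulting product $\int_0^t(1+t-\sigma)^{-a}(1+\sigma)^{-\beta}\,\textup{d}\sigma$. You instead split the integral at $t/2$, freeze the polynomial weight on the near piece and integrate out the exponential there, and on the far piece crush everything with the uniformly small factor $e^{-\gamma t/2}$, invoking \eqref{Eq_Expo_1} only at the end with $\alpha=1$ to trade exponential for polynomial decay. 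The paper's route is shorter because it recycles an off-the-shelf lemma; yours is more elementary and self-contained, sidestepping \eqref{Ineq_Segel} entirely (and in fact reproving the relevant case of it along the way). Both give the required $t$-independent constant depending only on $\gamma$ and $\beta$.
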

\begin{proof}
We have by using   \eqref{Ineq_Segel} together with \eqref{Eq_Expo_1} with $\sigma>\max(1,\beta)$ that
\begin{equation}
\begin{aligned}
\int_0^t e^{-\gamma(t-\sigma)}(1+\sigma)^{-\beta}\, \textup{d} \sigma\lesssim& \int_{0}^{t}\left( 1+t-s\right) ^{-\sigma}\left( 1+s\right) ^{-\beta}ds\\
\lesssim&\, (1+t)^{-\beta}, 
\end{aligned}
\end{equation}
which yields the desired result. 
\end{proof}
\begin{lemma}[See Lemma 4.1 in~\cite{HKa06}]
\label{Guass_symbol_lemma} Let $1\leq p,\,q,\,r\leq \infty $ and $%
1/p=1/q+1/r $. Then, we have%
\begin{equation}  
\Vert \nabla^k( uv) \Vert _{L^p}\leq C( \Vert u\Vert _{L^q}\Vert \nabla^k
v\Vert _{L^r}+\Vert v\Vert _{L^q}\Vert \nabla^ku\Vert _{L^r}) ,\quad k\geq 0,
\label{First_inequaliy_Guass}
\end{equation}
and the commutator estimate
\begin{eqnarray}
\Vert [ \nabla^{k},f] g\Vert _{L^p}&=&\Vert \nabla^k(fg)-f\nabla^k g\Vert_{L^p}\notag\\
& \leq& C( \Vert \nabla f\Vert _{L^q}\Vert
\nabla^{k-1}g\Vert _{L^r}+\Vert g\Vert _{L^q}\Vert \nabla^{k}f\Vert _{L^r})
,\quad k\geq 1,  \label{Second_inequality_Gauss}
\end{eqnarray}
 for some constant $C>0$.
\end{lemma}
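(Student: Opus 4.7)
The plan is to establish both the product inequality and the commutator bound by the classical Kato--Ponce/Moser template: Leibniz expansion, H\"older's inequality, and Gagliardo--Nirenberg interpolation. Because the statement cites a standard reference, the goal is essentially to describe the most direct elementary argument that avoids Fourier/Littlewood--Paley machinery.

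For the product estimate \eqref{First_inequaliy_Guass}, I would start from the multi-index Leibniz formula
\[
\nabla^k(uv) \;=\; \sum_{j=0}^{k}\binom{k}{j}\, \nabla^j u\,\nabla^{k-j} v,
\]
(understood as a linear combination of terms $D^\beta u\, D^{\alpha-\beta}v$ with $|\alpha|=k$, $|\beta|=j$). The boundary contributions $j=0$ and $j=k$ give exactly the two claimed terms $\|u\|_{L^q}\|\nabla^k v\|_{L^r}$ and $\|v\|_{L^q}\|\nabla^k u\|_{L^r}$ after applying H\"older with $1/p = 1/q + 1/r$. For each intermediate $0 < j < k$, I would introduce auxiliary exponents $p_j, r_j \in [1,\infty]$ with $1/p = 1/p_j + 1/r_j$ chosen so that the Gagliardo--Nirenberg inequalities
\[
\|\nabla^j u\|_{L^{p_j}} \lesssim \|u\|_{L^q}^{1-j/k}\|\nabla^k u\|_{L^r}^{j/k}, \qquad \|\nabla^{k-j} v\|_{L^{r_j}} \lesssim \|v\|_{L^q}^{j/k}\|\nabla^k v\|_{L^r}^{1-j/k}
\]
hold with the natural scaling $1/p_j = (1-j/k)/q + (j/k)/r$, and similarly for $r_j$. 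Multiplying the two bounds and applying Young's inequality with weights $(1-j/k, j/k)$ collapses each mixed product back into a sum of the two boundary expressions.

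For the commutator estimate \eqref{Second_inequality_Gauss}, the same strategy works once the top-order cancellation $\nabla^k(fg) - f\nabla^k g$ is exploited: the $j=0$ term in the Leibniz expansion disappears, so
\[
[\nabla^k, f]g \;=\; \sum_{j=1}^{k}\binom{k}{j}\,\nabla^j f\,\nabla^{k-j} g \;=\; \sum_{j=1}^{k}\binom{k}{j}\,\nabla^{j-1}(\nabla f)\,\nabla^{k-j} g.
\]
Rewriting $\nabla^j f = \nabla^{j-1}(\nabla f)$ transfers one derivative onto $\nabla f$, so the roles of $u\leftrightarrow \nabla f$ and $v\leftrightarrow g$ with ``order'' $k-1$ reduce the commutator bound to a product-type estimate of order $k-1$ applied to $\nabla f$ and $g$. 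The boundary index $j=1$ gives $\|\nabla f\|_{L^q}\|\nabla^{k-1}g\|_{L^r}$ directly from H\"older, $j=k$ gives $\|g\|_{L^q}\|\nabla^k f\|_{L^r}$, and the intermediate $1 < j < k$ are absorbed by the same Gagliardo--Nirenberg + Young step as in the product estimate.

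The only real subtlety is checking that the intermediate exponents $p_j, r_j$ lie in $[1, \infty]$ for all admissible triples $(p, q, r)$ with $1/p = 1/q + 1/r$; generically this follows from convexity, but the endpoints $p \in \{1, \infty\}$ (and the related cases $q, r \in \{1, \infty\}$) need a separate argument, typically handled by density or by replacing the interpolation with the Nirenberg--Gagliardo inequality on $\R^n$ in endpoint form. Since this is precisely the Kato--Ponce--Moser lemma in the form used in \cite{HKa06}, the cleanest route in the final write-up is to cite that reference and simply sketch the Leibniz + Gagliardo--Nirenberg mechanism for the reader's convenience.
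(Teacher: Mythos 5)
The paper does not prove this lemma at all: it simply cites it from Hosono--Kawashima~\cite{HKa06}. Your sketch is correct and is the standard Leibniz/H\"older/Gagliardo--Nirenberg argument for this Moser-type estimate, so there is nothing to reconcile with a ``paper's own proof.'' A few confirmations and one small observation. The intermediate exponents do live in $[1,\infty]$: since $1/p_j=(1-j/k)/q+(j/k)/r$ and $1/r_j=(j/k)/q+(1-j/k)/r$ are convex combinations of $1/q,1/r\in[0,1]$, both $p_j,r_j\in[1,\infty]$ automatically, and $1/p_j+1/r_j=1/q+1/r=1/p$ as needed for H\"older. The only place Gagliardo--Nirenberg's exceptional clauses could intrude is at $\alpha=1$, but your choices always take $\alpha=j/k<1$ for $0<j<k$, and the endpoints $j=0,k$ are handled by H\"older alone, so the exceptional cases are avoided. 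Your reduction of the commutator to a product estimate of order $k-1$ for $(\nabla f,g)$ via $\nabla^j f=\nabla^{j-1}(\nabla f)$ is exactly right, and the fact that the binomial coefficients change is irrelevant for the norm bound. In short: the argument is correct, elementary, and complete modulo the standard Gagliardo--Nirenberg input; citing~\cite{HKa06} with this sketch is precisely what the paper does.
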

\begin{lemma}[The Gagliardo--Nirenberg interpolation inequality; See~\cite{Ner59}] \label{interpolation_lemma}  Let $1\leq
p,\,q\,,r\leq \infty $, and let $m$ be a positive integer. Then for any
integer $j$ with $0\leq j< m$, we have%
\begin{equation}
\left\Vert \nabla ^{j}u\right\Vert _{L^{p}}\leq C\left\Vert
\nabla^{m}u\right\Vert _{L^{r}}^{\alpha}\left\Vert u\right\Vert
_{L^{q}}^{1-\alpha}  \label{Interpolation_inequality}
\end{equation}%
where
\begin{equation}
\frac{1}{p}=\frac{j}{n}+\alpha\left( \frac{1}{r}-\frac{m}{n}\right) + \frac{%
1-\alpha}{q}
\end{equation}%
for $\alpha$ satisfying $j/m\leq \alpha \leq 1$ and $C$ is a positive
constant depending only on $n,\, m,\,j,\,q,\,r$ and $\alpha$.
 There
are the following exceptional cases:
\begin{enumerate}
\item If $j=0,\,rm<n$ and $q=\infty $, then we made the additional
assumption that either $u(x)\rightarrow 0$ as $|x|\rightarrow \infty $ or $%
u\in L^{q^{\prime }}$ for some $0<q^{\prime }<\infty .$

\item If $1<r<\infty $ and $m-j-n/r$ is a nonnegative integer, then (\ref{Interpolation_inequality}) holds
only for $j/m\leq \alpha< 1$.
\end{enumerate}   
\end{lemma}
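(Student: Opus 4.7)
The plan is to follow the classical approach of Nirenberg~\cite{Ner59}, which separates into two complementary pieces: a dimensional-analysis argument that pins down the interpolation exponent $\alpha$, and a frequency-localization (or integration-by-parts) estimate that delivers the inequality itself. First I would verify the identity for $\alpha$ by dilation invariance: rescaling $u_\lambda(x)=u(\lambda x)$ gives
\begin{equation*}
\|\nabla^j u_\lambda\|_{L^p}=\lambda^{j-n/p}\|\nabla^j u\|_{L^p},
\end{equation*}
and analogous scalings for $\|\nabla^m u\|_{L^r}^\alpha \|u\|_{L^q}^{1-\alpha}$. Matching powers of $\lambda$ is necessary for a scale-invariant inequality, and it forces precisely
\begin{equation*}
\frac{1}{p}=\frac{j}{n}+\alpha\Bigl(\frac{1}{r}-\frac{m}{n}\Bigr)+\frac{1-\alpha}{q},
\end{equation*}
with the admissible range $\alpha\in[j/m,1]$ coming from the requirement that the right-hand side controls, respectively, the low-frequency (via $L^q$) and high-frequency (via $\nabla^m$ in $L^r$) behavior.

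For the inequality itself, the cleanest modern route is a Littlewood--Paley decomposition $u=\sum_k \Delta_k u$ combined with Bernstein's inequality $\|\nabla^j \Delta_k u\|_{L^p}\lesssim 2^{jk}\|\Delta_k u\|_{L^p}$, together with the two embeddings $\|\Delta_k u\|_{L^p}\lesssim \|u\|_{L^q}$ on low frequencies (with the appropriate Bernstein gain in $k$ when $q>p$) and $\|\Delta_k u\|_{L^p}\lesssim 2^{-mk}\|\nabla^m u\|_{L^r}$ on high frequencies. Splitting the dyadic sum at a threshold $k=k_0$ that balances the two contributions and optimizing in $k_0$ produces geometric series whose exponents collapse to the claimed identity for $\alpha$, and the resulting bound is exactly~\eqref{Interpolation_inequality}.

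An alternative, more elementary route, closer to Nirenberg's original argument, proceeds by induction on $m-j$. The base case uses integration by parts in the form
\begin{equation*}
\int |\nabla u|^p\,dx = -\int u \, \nabla\!\cdot\!\bigl(|\nabla u|^{p-2}\nabla u\bigr)\,dx,
\end{equation*}
followed by Hölder's inequality with exponents chosen to produce $\|u\|_{L^q}$ on one factor and $\|\nabla^2 u\|_{L^r}$ (or $\|\nabla u\|_{L^p}$) on the other, and then one absorbs the resulting $\|\nabla u\|_{L^p}$ factors into the left-hand side. Higher-order derivatives are handled by iterating this estimate with $\nabla^{j-1}u$ in place of $u$.

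The main technical obstacle lies in the two exceptional cases. When $j=0$, $rm<n$, and $q=\infty$, control of the zero-frequency (constant or slowly decaying harmonic) mode is lost, which is precisely why the supplementary hypothesis $u(x)\to 0$ as $|x|\to\infty$ or auxiliary $L^{q'}$ integrability must be imposed to kill this tail. When $1<r<\infty$ and $m-j-n/r$ is a nonnegative integer, one sits at a borderline Sobolev embedding where a logarithmic divergence at $\alpha=1$ obstructs the endpoint, leaving the inequality valid only for $\alpha<1$. Since the present paper only invokes~\eqref{Interpolation_inequality} in the generic non-endpoint regime (see e.g.~\eqref{L_infty_Interp} and the estimates in Section~\ref{Proof_Decay_Estimates}), a detailed verification of these subtleties is not needed, and one simply refers to~\cite{Ner59} for the full argument.
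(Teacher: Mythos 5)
The paper does not actually prove Lemma~\ref{interpolation_lemma}; it is stated purely as a citation to Nirenberg~\cite{Ner59} and used as a black box, so there is no ``paper's proof'' to compare against. Your proposal is therefore supplementary material rather than a competing argument, and it is a reasonable and essentially correct outline of the classical route. The scaling computation correctly forces the relation for $\alpha$, the Littlewood--Paley/Bernstein decomposition (or, alternatively, the integration-by-parts-plus-induction scheme of Nirenberg's original paper) is the standard way to obtain~\eqref{Interpolation_inequality} once the exponent is fixed, and your diagnosis of the two exceptional cases --- loss of control of the constant mode when $j=0$, $q=\infty$, $rm<n$, and a logarithmic endpoint failure when $m-j-n/r\in\mathbb{N}_0$ with $1<r<\infty$ --- is the right heuristic. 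Two small caveats worth flagging if you ever wrote this out in full: the dyadic splitting-at-$k_0$ argument only yields the strict interior range $j/m<\alpha<1$ directly, and the endpoints $\alpha=j/m$ and $\alpha=1$ require separate (Sobolev-embedding) arguments, which is precisely where the second exceptional case bites; and the absorption step in the integration-by-parts route needs a qualitative a~priori finiteness assumption (e.g.\ $u\in C_c^\infty$ followed by density) to be legitimate. Since the paper only invokes the lemma in non-endpoint regimes, deferring to~\cite{Ner59} as you do at the end is exactly what the authors do, and no gap is introduced.
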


\begin{lemma}[Endpoint Sobolev embedding; see~\cite{bahouri2011fourier}] \label{Lemma:EndpointEmbedding}
Assume that $2<p<\infty$. Then, there exists a constant $C=C(n,p)$ such that if $f\in \dot{H}^s(\R^n)$ with $s=n(1/2-1/p)$, then $f\in L^p(\R^n)$ and 
\begin{eqnarray}\label{Emedding_Sobolev}
\Vert f\Vert_{L^p}\leq C\Vert f\Vert_{\dot{H}^s}. 
\end{eqnarray}    
In particular, we have
 \begin{equation} 
\|\psi\|_{L^{\frac{2n}{n-2}}} \lesssim \|\nabla \psi\|_{L^2},
\end{equation}
and
\begin{equation} 
\|\psi\|_{L^{n}} \lesssim \|\psi\|_{\dot{H}^{\frac{n-2}{2}}}.
\end{equation}
\end{lemma}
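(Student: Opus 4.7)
} The plan is to reduce the endpoint embedding to the Hardy--Littlewood--Sobolev (HLS) inequality on the Riesz potential. Set $g = (-\Delta)^{s/2} f$, so that by definition $\|g\|_{L^2} = \|f\|_{\dot H^s}$, and represent $f$ via the Riesz potential of order $s$,
\begin{equation*}
f(x) \;=\; I_s g(x) \;=\; c_{n,s} \int_{\R^n} \frac{g(y)}{|x-y|^{n-s}}\, \textup{d} y,
\end{equation*}
valid for $0 < s < n$ (which holds here because $s = n(1/2 - 1/p) \in (0, n/2)$ for $2 < p < \infty$). Then I would invoke the HLS inequality, which states that $I_s : L^{q}(\R^n) \to L^{p}(\R^n)$ is bounded whenever $1 < q < p < \infty$ and $1/p = 1/q - s/n$. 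With $q = 2$, the relation $s = n(1/2 - 1/p)$ becomes exactly $1/p = 1/2 - s/n$, so HLS applies and gives
\begin{equation*}
\|f\|_{L^p} \;=\; \|I_s g\|_{L^p} \;\lesssim\; \|g\|_{L^2} \;=\; \|f\|_{\dot H^s},
\end{equation*}
which is the desired endpoint embedding.

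The two specialized inequalities then follow by plugging in concrete exponents. For the first one, take $p = 2n/(n-2)$; a short computation shows $s = n(1/2 - (n-2)/(2n)) = 1$, so \eqref{Emedding_Sobolev} gives $\|\psi\|_{L^{2n/(n-2)}} \lesssim \|\psi\|_{\dot H^1} = \|\nabla \psi\|_{L^2}$. For the second one, take $p = n$, which gives $s = n(1/2 - 1/n) = (n-2)/2$, so $\|\psi\|_{L^n} \lesssim \|\psi\|_{\dot H^{(n-2)/2}}$. Both statements require $n \geq 3$, which is exactly the standing assumption in the main body of the paper.

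The only genuine ingredient here is HLS, whose proof is standard (either via the rearrangement/layer-cake method of Lieb or via a Littlewood--Paley/Bernstein argument: decompose $f = \sum_j \Delta_j f$ and use $\|\Delta_j f\|_{L^p} \lesssim 2^{jn(1/2 - 1/p)} \|\Delta_j f\|_{L^2}$, then sum with the $\dot H^s$ norm on the frequency side). There is no real obstacle beyond bookkeeping of exponents; the potential pitfall is the usual one that $\dot H^s(\R^n)$ is not a complete space for $s \geq n/2$, but this is irrelevant here since $s < n/2$ strictly in the range $2 < p < \infty$. For this reason I would not reprove HLS from scratch but simply cite it from~\cite{bahouri2011fourier}.
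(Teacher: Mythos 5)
Your proposal is correct, and the exponent bookkeeping checks out: $s = n(1/2-1/p)$ gives $s=1$ for $p = 2n/(n-2)$ and $s=(n-2)/2$ for $p=n$, both lying in $(0,n/2)$ so the Riesz potential representation $f = I_s\bigl((-\Delta)^{s/2}f\bigr)$ and the Hardy--Littlewood--Sobolev inequality with source exponent $q=2$ apply verbatim. Note, however, that the paper itself does not prove this lemma --- it is stated as a known fact and cited directly from~\cite{bahouri2011fourier}, so there is no ``paper proof'' to compare against in a strict sense. Your HLS/Riesz-potential route is a perfectly standard proof; the reference~\cite{bahouri2011fourier} typically derives this embedding via Littlewood--Paley decomposition together with the Bernstein inequality $\|\Delta_j f\|_{L^p} \lesssim 2^{jn(1/2-1/p)}\|\Delta_j f\|_{L^2}$ followed by summation in $j$ (which you also mention). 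The two approaches are essentially equivalent in difficulty: HLS isolates the nonlinear estimate in a single convolution inequality, while the Littlewood--Paley route keeps everything at the level of dyadic blocks and naturally yields the refined Besov endpoint $\dot H^s \hookrightarrow \dot B^0_{p,2} \hookrightarrow L^p$. For the purposes of the paper, either justification would do; the only subtlety worth flagging --- which you correctly address --- is that $s<n/2$ strictly, so the homogeneous Sobolev space issues discussed in Section~3 of the paper do not interfere here.
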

\begin{lemma}[See Lemma 3.5 in~\cite{PellSaid_2019_1}] \label{Lemma:Ineq}
   	Let $n\geq 1$ and $t\geq 0 $. Then the following estimate holds:
   	\begin{equation}\label{eq of DR}
   	\int_{0}^{1}r^{n-1}e^{-r^{2}t}\textup{d}r \leq C(n)(1+t)^{-{n}/{2}}.
\end{equation}    
\end{lemma}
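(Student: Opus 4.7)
The plan is to split the argument into the two regimes $t\leq 1$ and $t>1$, since the bound is trivial in the first regime and requires a change of variable in the second. For $t\leq 1$, I would simply estimate the exponential by $1$, obtaining
\[
\int_0^1 r^{n-1}e^{-r^2 t}\,\textup{d}r\leq \int_0^1 r^{n-1}\,\textup{d}r = \frac{1}{n},
\]
and then note that $(1+t)^{-n/2}\geq 2^{-n/2}$ on this range, so the bound follows with $C(n)=2^{n/2}/n$.

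For $t>1$, the plan is to perform the substitution $u=r^2 t$, which gives $\textup{d}u=2rt\,\textup{d}r$ and $r^{n-1}\,\textup{d}r = \frac{1}{2}t^{-n/2}u^{n/2-1}\,\textup{d}u$. This converts the integral into a partial gamma integral:
\[
\int_0^1 r^{n-1}e^{-r^2 t}\,\textup{d}r = \frac{1}{2\,t^{n/2}}\int_0^{t}u^{n/2-1}e^{-u}\,\textup{d}u \leq \frac{\Gamma(n/2)}{2\,t^{n/2}}.
\]
Since $t>1$ implies $t\geq (1+t)/2$, one gets $t^{-n/2}\leq 2^{n/2}(1+t)^{-n/2}$, which yields the bound with a constant depending only on $n$. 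Combining the two cases by taking the larger of the two constants completes the proof.

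There is essentially no serious obstacle here: the only care needed is to verify that the substitution handles the case $n=1$ (where $u^{-1/2}$ is integrable near $0$) and that both constants can be absorbed into a single $C(n)$. I would therefore state the result, perform the two-case split, and conclude with one line combining the bounds.
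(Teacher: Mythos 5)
Your proof is correct: the two-regime split at $t=1$, with the trivial bound $e^{-r^2t}\le 1$ in the first case and the substitution $u=r^2t$ reducing to the incomplete Gamma integral in the second, is the standard way to establish this estimate. The paper itself does not reprove the lemma but simply cites it from the reference \cite{PellSaid_2019_1}, so there is no in-paper argument to compare against; your self-contained proof is complete, and your remark about the integrability of $u^{n/2-1}$ near the origin for $n=1$ correctly addresses the only boundary issue.
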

\noindent We state here one more useful inequality that will be crucial in our energy arguments.
\begin{lemma}[See Lemma 3.7 in~\cite{strauss1968decay}] 
\label{Lemma_Stauss} Let $M=M(t)$ be a non-negative continuous function
satisfying the inequality
\begin{equation}
M(t)\leq C_1+C_2 M(t)^{\kappa},
\end{equation}
in some interval containing $0$, where $C_1$ and $C_2$ are positive
constants and $\kappa>1$. If $M(0)\leq C_1$ and
\begin{equation}   \label{Condition_C_1_C_2}
C_1C_2^{1/(\kappa-1)}<(1-1/\kappa)\kappa^{-1/(\kappa-1)},
\end{equation}
then in the same interval
\begin{equation}
M(t)<\frac{C_1}{1-1/\kappa}.
\end{equation}
\end{lemma}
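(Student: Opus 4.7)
The plan is a classical convex-analysis plus continuity (bootstrap) argument. Define the auxiliary function $\Phi\colon[0,\infty)\to\mathbb{R}$ by $\Phi(y):=C_2 y^{\kappa}-y+C_1$, so that the hypothesis $M(t)\leq C_1+C_2 M(t)^{\kappa}$ is equivalent to $\Phi(M(t))\geq 0$. I would then show that the smallness condition on $C_1 C_2^{1/(\kappa-1)}$ guarantees that the set $\{\Phi\geq 0\}\cap[0,\infty)$ consists of two disjoint closed intervals separated by an open interval on which $\Phi<0$, and use continuity of $M$ together with $M(0)\leq C_1$ to trap $M(t)$ in the lower component forever.

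Concretely, since $\kappa>1$ the function $\Phi$ is strictly convex on $[0,\infty)$, with $\Phi(0)=C_1>0$ and $\Phi(y)\to+\infty$. Its unique critical point is $y_{\star}=(\kappa C_2)^{-1/(\kappa-1)}$, and a short computation gives $\Phi(y_{\star})=C_1-(1-1/\kappa)\kappa^{-1/(\kappa-1)} C_2^{-1/(\kappa-1)}$. The hypothesis $C_1 C_2^{1/(\kappa-1)}<(1-1/\kappa)\kappa^{-1/(\kappa-1)}$ is \emph{exactly} equivalent to $\Phi(y_{\star})<0$. By strict convexity this forces $\Phi$ to have exactly two positive zeros $0<y_1<y_{\star}<y_2$, with $\Phi<0$ on $(y_1,y_2)$ and $\Phi\geq 0$ on $[0,y_1]\cup[y_2,\infty)$. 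Moreover $\Phi(C_1)=C_2 C_1^{\kappa}>0$ and $C_1<y_{\star}$ (because $\Phi$ is decreasing on $[0,y_{\star}]$ while $\Phi(y_{\star})<0<\Phi(C_1)$), hence $C_1<y_1$, and so $M(0)\leq C_1<y_1$.

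With these facts in place the bootstrap is immediate: $M$ is continuous and satisfies $\Phi(M(t))\geq 0$ throughout the interval, so the curve $t\mapsto M(t)$ cannot cross the forbidden open interval $(y_1,y_2)$; starting in $[0,y_1]$ it must remain there. Therefore $M(t)\leq y_1$ on the whole interval. To close the argument, set $\rho:=C_1/(1-1/\kappa)=\kappa C_1/(\kappa-1)$ and compute
\[
\Phi(\rho)=-\frac{C_1}{\kappa-1}+C_2\left(\frac{\kappa C_1}{\kappa-1}\right)^{\!\kappa};
\]
a direct algebraic manipulation shows $\Phi(\rho)<0$ is again equivalent to the assumed smallness condition, so $\rho\in(y_1,y_2)$ and in particular $y_1<\rho$. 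This yields $M(t)\leq y_1<\rho$, as claimed. The only delicate point is the algebraic bookkeeping that matches the threshold for two distinct roots of $\Phi$ with the threshold for $\rho$ to lie strictly between them; this sharp coincidence is what makes the stated bound $C_1/(1-1/\kappa)$ tight and dictates the precise form of the smallness hypothesis.
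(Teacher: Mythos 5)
Your proof is correct, and the paper itself offers no proof to compare against---Lemma~\ref{Lemma_Stauss} is stated with a bare citation to Strauss~\cite{strauss1968decay}, and your convexity-plus-continuity bootstrap is precisely the classical argument used there. The key computations all check out: $\Phi'(y_\star)=0$ gives $y_\star=(\kappa C_2)^{-1/(\kappa-1)}$; substituting $C_2 y_\star^{\kappa}=y_\star/\kappa$ yields $\Phi(y_\star)=C_1-(1-1/\kappa)\kappa^{-1/(\kappa-1)}C_2^{-1/(\kappa-1)}$, so that $\Phi(y_\star)<0$ is exactly hypothesis~\eqref{Condition_C_1_C_2}; and expanding $\Phi(\rho)$ with $\rho=\kappa C_1/(\kappa-1)$ reproduces the same inequality, so $\rho$ lies in the forbidden interval and hence $y_1<\rho$. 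One small point of hygiene: the parenthetical justification that $C_1<y_\star$ ``because $\Phi$ is decreasing on $[0,y_\star]$ while $\Phi(y_\star)<0<\Phi(C_1)$'' is circular as written---monotonicity on $[0,y_\star]$ is only usable once you already know $C_1\in[0,y_\star]$, and $\Phi(C_1)>0$ alone is compatible with $C_1>y_2$. The clean way to get $C_1<y_\star$ is directly from the sign condition: $\Phi(y_\star)<0$ rearranges to $C_1<y_\star(1-1/\kappa)<y_\star$, after which the monotonicity argument does give $C_1<y_1$. This is a one-line fix and does not affect the correctness of the overall argument.
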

\section{Proof of Proposition~\ref{Decay_w_New}}\label{Appendix_B}
\noindent We present here the proof of Proposition~~\ref{Decay_w_New} on the decay in $w$ for the linear equation.
\renewcommand*{\proofname}{Proof of Proposition~\ref{Decay_w_New}}
\begin{proof}
	The proof is similar in most parts to the one in  \cite[Proposition 7.1]{nikolic2020mathematical} and follows by employing energy arguments in the Fourier space. We note that the following estimate holds:
	\begin{equation}\label{w_Energy_Fourier}
	\begin{aligned}
	\frac{1}{2}\frac{\textup{d}}{\dt}\tau \left\vert \hat{w}\right\vert
	^{2}+ \frac{1}{2}|\hat{w}|^2 \lesssim \, |\xi|^2 \hat{E}_1(\xi,t);
	\end{aligned}
	\end{equation}
	see~\cite[Proposition 7.1]{nikolic2020mathematical}. Above, we have introduced
	\begin{align} \label{energy}
	\hat{E}_1(\xi,t)=& \dfrac{1}{2}\left[\vphantom{\int_{0}^{\infty}} c^2_g|\xi|^{2}|\hat{\psi}+\tau\hat{v}|^{2}+\tau(b-\tau c^2_g)|\xi|^{2}|\hat{v}|^{2}+|\hat{v}+\tau\hat{w}|^{2}+\tau\Vert\hat{\eta}\Vert_{L^2, -g'}^{2}\right.  \nonumber\\
	&\left. +|\xi|^{2} \int_{0}^{\infty}g(r)|\hat{\eta}(s)|^{2}\ds+2\tau|\xi|^{2}{\Re}\left(\int_{0}^{\infty}g(r)\left\langle \hat{\eta}(s),\bar{\hat{v}}\right\rangle \ds\right)\right],
	\end{align}
	where we denote the variable dual to $x$ by $\xi$ in the Fourier transform. Together with taking into account the estimate
	\begin{equation}\label{Eexp}
	\hat{E}_1(\xi,t)\lesssim \hat{E}_1(\xi,0)\exp{\left(-\lambda \tfrac{|\xi|^2}{(1+|\xi|^2)^2}t \right)}, \qquad t \geq 0
	\end{equation} 
	we obtain 
	\begin{equation}\label{w_Fourier_Estimate}
	\left\vert \hat{w}\right\vert
	^{2}\leq |\hat{w}_0|^2 \exp{(-\tfrac{1}{\tau} t)}+C|\xi|^2 \hat{E}_1(\xi,0)\exp{(-\lambda \tfrac{|\xi|^2}{(1+|\xi|^2)^2})t)},
	\end{equation}  
	provided that the thermal relaxation is small enough; see~\cite[\S 7.1]{nikolic2020jordan} for a similar approach. We define 
	\begin{equation}
	\begin{aligned}
	|\hat{\vecc{\Psi}}(\xi,t)|^2_{\mathrm{H}}=&\,|\xi|^{2}|\alpha\hat{\psi}+\tau\hat{v}|^{2}+|\alpha\hat{v}+\tau\hat{w}|^{2}+|\xi|^{2}|\hat{v}|^{2}+ \Vert\hat{\eta}\Vert_{\mathcal{M}}^{2}\\
	=&\, |\hat{\vecc{U}}(\xi,t)|^2+\Vert\hat{\eta}\Vert_{\mathcal{M}}^{2},
	\end{aligned}
	\end{equation}  
	where $\|\hat{\eta}\|_{\mathcal{M}}=|\xi|^{2} \int_{0}^{\infty}g(r)|\hat{\eta}(s)|^{2}\ds$. Hence, it holds that   
	\begin{equation} \label{ineq_U_E_1}
	\hat{E}_1(\xi,t) \lesssim |\hat{\vecc{\Psi}}(\xi,t)|^2, \quad t > 0, \quad \text{and}\quad \hat{E}_1(\xi,0) \lesssim |\hat{\vecc{U}}(\xi,0)|^2
	\end{equation}
	In fact we also have (see \cite{Bounadja_Said_2019}),   
	\begin{equation}  
	\hat{E}_1(\xi,t)\gtrsim
	|\hat{\vecc{\Psi}}(\xi,t)|^2_{\mathrm{H}}.
	\end{equation}
	So, combining the above estimates, we have 
	\begin{equation}
	\begin{aligned}
	|\hat{\vecc{U}}(\xi,t)|^2\lesssim |\hat{\vecc{\Psi}}(\xi,t)|_{\mathrm{H}}^2\lesssim&\, \hat{E}_1(\xi,t)\\
	\lesssim&\, \hat{E}_1(\xi,0)\exp{\left(-\lambda \tfrac{|\xi|^2}{(1+|\xi|^2)^2}t\right)}\\
	\lesssim&\, |\hat{\vecc{U}}(\xi,0)|^2 \exp{\left(-\lambda \tfrac{|\xi|^2}{(1+|\xi|^2)^2}t\right)};
	\end{aligned}
	\end{equation}
	see \eqref{Eexp}. By applying  Plancherel's theorem, the  above estimate yields  \eqref{decay_loss}; see also~\cite{Bounadja_Said_2019} for more details. Applying Plancherel's theorem together with  \eqref{ineq_U_E_1} at $t=0$ yields
	\begin{equation}\label{Plancherel}
	\begin{aligned}
	\Vert\nabla^{j}w(t)\Vert_{L^{2}}^{2}=& \, \int_{\R^{n}}|\xi|^{2j}|\hat{w}(\xi,t)|^{2}\, \textup{d}\xi\\
	\lesssim& \,\Vert \nabla^j \psi_2 \Vert_{L^2}^2\exp{\left(-\tfrac{1}{\tau} t\right)}+ \int_{\R^{n}}|\xi|^{2(j+1)} \exp{\left(-\lambda \tfrac{|\xi|^2}{(1+|\xi|^2)^2}t\right)}|\hat{\mathbf{U}}(\xi,0)|^{2}\, \textup{d}\xi     
	\end{aligned}	
	\end{equation}
	for any integer $j\geq 0$.  The second term on the right-hand side can be split into
	\begin{equation} \label{split Intg} 
	\begin{aligned}
	&\int_{\R^{n}}|\xi|^{2(j+1)} \exp{\left(-\lambda \tfrac{|\xi|^2}{(1+|\xi|^2)^2}t\right)}|\hat{\mathbf{U}}(\xi,0)|^{2}\textup{d} \xi\\
	=& \, \begin{multlined}[t]\int_{|\xi|\leq 1}|\xi|^{2(j+1)} \exp{\left(-\lambda \tfrac{|\xi|^2}{(1+|\xi|^2)^2}t\right)}|\hat{\mathbf{U}}(\xi,0)|^{2}\textup{d} \xi \\  
	+ \int_{|\xi|\geq 1}|\xi|^{2(j+1)} \exp{\left(-\lambda \tfrac{|\xi|^2}{(1+|\xi|^2)^2}t\right)}|\hat{\mathbf{U}}(\xi,0)|^{2}\textup{d}\xi.\end{multlined}
	\end{aligned}
	\end{equation}   
	We note that
	\begin{equation} \label{rho*}
	\frac{|\xi|^2}{1+|\xi|^2} \gtrsim 
	\begin{cases}  
	|\xi|^{2}, & \text{if }\quad |\xi|\leq 1, \vspace{0.2cm}\\ 
	|\xi|^{-2}, & \text{if} \quad|\xi|\geq 1.
	\end{cases}  
	\end{equation}  	
	Concerning the first integral on the right in \eqref{split Intg}, by exploiting the inequality \[\int_{0}^{1}r^{n-1}e^{-r^{2}t}\textup{d}r \leq C(n)(1+t)^{-{n}/{2}},\]
	given in Lemma~\ref{Lemma:Ineq} together with \eqref{rho*}, we find that
	\begin{equation}\label{I1}
	\begin{aligned}
	\int_{|\xi|\leq 1}|\xi|^{2(j+1)} \exp{\left(-\lambda \tfrac{|\xi|^2}{1+|\xi|^2}t\right)}|\hat{\mathbf{U}}(\xi,0)|^{2}\textup{d} \xi\leq& \, \Vert\hat{\mathbf{U}}_{0}\Vert_{L^{\infty}}^{2}\int_{|\xi|\leq 1}|\xi|^{2(j+1)} \exp{\left(-\tfrac{\lambda}{2} \tfrac{|\xi|^2}{1+|\xi|^2}t\right)}\, \textup{d}\xi\\
	\lesssim& \, (1+t)^{-\frac{n}{2}-1-j}\Vert \mathbf{U}_{0}\Vert_{L^{1}}^{2}.
	\end{aligned}
	\end{equation}
	On the other hand, in the high-frequency region where $|\xi|\geq 1$, we have
	by using the estimate 
	\begin{equation}
	\sup_{|\xi|\geq 1}\left\lbrace |\xi|^{-2\ell}e^{-c|\xi|^{-2}t}\right\rbrace \lesssim (1+t)^{-\ell}
	\end{equation}  
	that	
	\begin{equation}\label{I2}
	\begin{aligned}
	&\int_{|\xi|\geq 1}|\xi|^{2(j+1)} \exp{\left(-\lambda \tfrac{|\xi|^2}{(1+|\xi|^2)^2}t\right)}|\hat{\mathbf{U}}(\xi,0)|^{2}\textup{d}\xi\\
	\leq&\, \sup_{|\xi|\geq 1}\left\lbrace |\xi|^{-2\ell}e^{-c|\xi|^{-2}t}\right\rbrace\int_{|\xi|\geq 1}|\xi|^{2(j+\ell+1)} |\hat{\mathbf{U}}(\xi,0)|^{2}d\xi \\
	\leq& \, (1+t)^{-\ell}\Vert\nabla^{j+\ell+1}\mathbf{U}_{0}\Vert_{L^{2}}^{2},
	\end{aligned}	
	\end{equation} 
	which completes the proof.	
\end{proof}
\section{Proof of Proposition~\ref{Lemma_decay_v_infty}} \label{AppendixC}
\renewcommand*{\proofname}{Proof of Proposition~\ref{Lemma_decay_v_infty}}
\begin{proof}    
	The decay estimate for $\Vert \nabla^j v\Vert_{L^2}$ follows by combining the derived bounds \eqref{decay_loss} and \eqref{Decay_estimate_W}. For the second estimate, we use the Gagliardo--Nirenberg interpolation inequality \eqref{L_infty_Interp} for an integer $q\geq n/2$. We combine it with particular cases of estimate \eqref{v_L_2_Estimate}:
	\begin{equation}
	\begin{aligned}
	\Vert v(t)\Vert_{L^{2}}\lesssim&\, (\Vert \psi_2 \Vert_{L^2}+\Vert \mathbf{U}_{0}\Vert_{L^{1}})(1+t)^{-\frac{n}{4}}+(1+t)^{-\ell/2} \Vert\nabla^{\ell}\vecc{U}_{0}\Vert _{H^{1}} \\
	\lesssim&\, (\Vert \psi_2 \Vert_{L^2}+\Vert \mathbf{U}_{0}\Vert_{L^{1}}+\Vert\nabla^{\ell}\vecc{U}_{0}\Vert _{H^{1}})(1+t)^{-\frac{n}{4}}
	\end{aligned}  
	\end{equation}
	as well as
	\begin{equation}
	\begin{aligned}
	\Vert\nabla^{q}v(t)\Vert_{L^{2}}\lesssim&\, (\Vert \nabla^q \psi_2 \Vert_{L^2}+\Vert \mathbf{U}_{0}\Vert_{L^{1}})(1+t)^{-\frac{n}{4}-\frac{q}{2}}+(1+t)^{-\ell/2} \Vert\nabla^{q+\ell}\vecc{U}_{0}\Vert _{H^{1}}
	\\  
	\lesssim&\, (\Vert \nabla^q \psi_2\Vert_{L^2}+\Vert \mathbf{U}_{0}\Vert_{L^{1}}+\Vert\nabla^{q+\ell}\vecc{U}_{0}\Vert _{H^{1}})(1+t)^{-\frac{n}{4}-\frac{q}{2}},
	\end{aligned}
	\end{equation}
	which hold provided that $\ell/2 \geq n/4 +q/2$. We thus obtain
	\begin{equation}
	\begin{aligned}   
	\Vert v(t) \Vert_{L^\infty}\lesssim&\, \begin{multlined}[t] (\Vert \nabla^q \psi_2 \Vert_{L^2}+\Vert \mathbf{U}_{0}\Vert_{L^{1}}+\Vert\nabla^{q+\ell}\vecc{U}_{0}\Vert _{H^{1}})^{\frac{n}{2q}} \vspace{0.2cm}\\
	\times (\Vert \psi_2 \Vert_{L^2}+\Vert \mathbf{U}_{0}\Vert_{L^{1}}+\Vert\nabla^{\ell}\vecc{U}_{0}\Vert _{H^{1}})^{(1-\frac{n}{2q})}(1+t)^{-n/2} \end{multlined} \vspace{0.2cm}\\
	\lesssim&\, \begin{multlined}[t] (\Vert \psi_2 \Vert_{L^2}+\Vert \nabla^q \psi_2 \Vert_{L^2}+\Vert \mathbf{U}_{0}\Vert_{L^{1}}+\Vert\nabla^{\ell}\vecc{U}_{0}\Vert _{H^{1}}+\Vert\nabla^{q+\ell}\vecc{U}_{0}\Vert _{H^{1}(\R^{n})})(1+t)^{-n/2} \end{multlined} \vspace{0.2cm}\\
	\lesssim&\, \begin{multlined}[t] (\Vert \psi_2 \Vert_{H^q}+\Vert \mathbf{U}_{0}\Vert_{L^{1}}+\Vert \vecc{U}_{0}\Vert _{H^{q+\ell+1}})(1+t)^{-n/2}. \end{multlined} 
	\end{aligned}
	\end{equation}
	Finally, we take $q=[n/2]+1$ and $\ell=n+1$ in the above estimate to arrive at \eqref{v_L_infty_Estimate}.
\end{proof}
Note that in order to estimate $\Vert \nabla \vecc{U}\Vert_{L^\infty}$, we can use the interpolation inequality: 
\begin{equation}\label{L_infty_Interp_2}
\left\Vert \nabla \vecc{U}\right\Vert _{L^{\infty }}\lesssim \left\Vert \nabla ^{q}%
\vecc{U}\right\Vert _{L^{2}}^{\frac{2+n}{2q}}\left\Vert \vecc{U}
\right\Vert _{L^{2}}^{1-\frac{2+n}{2q}},  
\end{equation}  
for $q> \frac{n}{2}+1 $. We can repeat the previous arguments with $q=[n/2]+2$ and $\ell=n+2$, assuming $\vecc{U}_0\in \left(L^1(\R^n) \cap H^{n+[n/2]+4}(\R^n) \right)^3$.\\
\end{appendices}

\bibliography{references}{}
\bibliographystyle{siam} 
\end{document}